\journal{Advances in Mathematics}
\newtheorem{Thm}{Theorem}
\newtheorem{Cor}[Thm]{Corollary}
\newtheorem{theorem}{Theorem}[section]
\newtheorem{lemma}[theorem]{Lemma}
\newtheorem{corollary}[theorem]{Corollary}
\newtheorem{proposition}[theorem]{Proposition}
\theoremstyle{definition}
\newtheorem{remark}[theorem]{Remark}
\newtheorem{definition}[theorem]{Definition}
\newtheorem{examples}[theorem]{Examples}
\def\RP{\mathbbm{R P}}
\def\R{\mathbbm{R}}
\def\F{\mathbbm{F}}
\def\P{\mathbbm{P}}
\def\D{\mathbbm{D}}
\def\C{\mathbbm{C}}
\def\s{\mathbbm{S}}
\def\B{\mathbbm{B}}
\def\X{\mathcal X}
\def\cL{\mathcal L}
\def\a{\alpha}
\def\e{\varepsilon}
\def\t{\theta}
\def\0{\underline 0}
\newcommand{\inpr}[2]{\ensuremath{\langle#1,#2\rangle}}
\newcommand{\abs}[1]{\lvert #1\rvert}   % amsmath
\newcommand{\norm}[1]{\lVert #1\rVert}  % amsmath
\newcommand{\set}[2]{\ensuremath{\{\,{#1}\mid {#2}\,\}}}
\DeclareMathOperator{\grad}{grad}
\DeclareMathOperator{\dist}{dist}
\DeclareMathOperator{\sing}{Sing}
\DeclareMathOperator{\spn}{span}
\begin{document}

\begin{frontmatter}

%% Title, authors and addresses

%% use the tnoteref command within \title for footnotes;
%% use the tnotetext command for theassociated footnote;
%% use the fnref command within \author or \address for footnotes;
%% use the fntext command for theassociated footnote;
%% use the corref command within \author for corresponding author footnotes;
%% use the cortext command for theassociated footnote;
%% use the ead command for the email address,
%% and the form \ead[url] for the home page:
%% \title{Title\tnoteref{label1}}
%% \tnotetext[label1]{}
%% \author{Name\corref{cor1}\fnref{label2}}
%% \ead{email address}
%% \ead[url]{home page}
%% \fntext[label2]{}
%% \cortext[cor1]{}
%% \address{Address\fnref{label3}}
%% \fntext[label3]{}

\title{Refinements of Milnor's Fibration Theorem for Complex
Singularities\tnoteref{support}}

%% use optional labels to link authors explicitly to addresses:
%% \author[label1,label2]{}
%% \address[label1]{}
%% \address[label2]{}

\author{J.~L.~Cisneros-Molina\fnref{ictp}}
\ead{jlcm@matcuer.unam.mx}

\author{J.~Seade\corref{cor1}}
\ead{jseade@matcuer.unam.mx}

\author{J.~Snoussi}
\ead{jsnoussi@matcuer.unam.mx}
\fntext[ictp]{Regular Associate of the % Abdus Salam
International Centre for Theoretical Physics, Trieste, Italy.}

\tnotetext[support]{Partially supported by CONACYT: G-36357-E, J-49048-F,
U-55084, Mexico, DGAPA-UNAM: PAPIIT IN105806, IN102208, Mexico,  by ICTP,
Italy, and CNRS, France.}

\cortext[cor1]{Corresponding author}

\address{Instituto de Matem\'aticas, Unidad Cuernavaca,\\
Universidad Nacional Aut\'onoma de M\'exico,\\
Av. Universidad s/n, Lomas de Chamilpa,
C.P.~62210, Cuernavaca, Morelos, M\'exico}

\begin{abstract}
%% Text of abstract
Let $X$ be an analytic subset of an open neighbourhood  $U$ of the
origin $\0$ in $\C^n$. Let $f\colon (X,\0) \to (\C,0)$ be
holomorphic  and set $V =f^{-1}(0)$. Let $\B_\e$ be a ball in $U$
of sufficiently small radius $\e>0$, centred at $\0\in\C^n$. We
show that $f$ has an associated canonical pencil of real analytic
hypersurfaces $X_\t$, with axis $V$,  which leads to a fibration
$\Phi$ of the whole space $(X \cap \mathbb{B}_\e) \setminus V$
over $\s^1 $. Its restriction  to  $(X \cap \s_\e) \setminus V$ is
the usual Milnor fibration $\phi = \frac{f}{|f|}$, 
while its restriction  to the Milnor
tube $f^{-1}(\partial \D _\eta) \cap \mathbb{B}_\e$ is the
Milnor-L\^e fibration of $f$. Each element of the pencil $X_\t$
meets transversally the boundary sphere $\s_\e = \partial \B_\e$,
and the intersection is the union of the link of $f$ and two
homeomorphic fibers of $\phi$ over antipodal points in the
circle. Furthermore, the space ${\tilde X}$ obtained by the real
blow up of the ideal $(Re(f), Im(f))$ is a fibre bundle over $\R
\P^1$ with the $X_\t$ as fibres. These
  constructions work also, to some extent, for real
analytic map-germs, and give us a clear picture    of the
differences, concerning Milnor fibrations, between real and
complex analytic singularities.
\end{abstract}

\begin{keyword}
%% keywords here, in the form: keyword \sep keyword
Real and complex singularities \sep Milnor
fibration \sep rugose vector fields \sep stratifications \sep pencils \sep open-books.
%% PACS codes here, in the form: \PACS code \sep code

%% MSC codes here, in the form: \MSC code \sep code
%% or \MSC[2008] code \sep code (2000 is the default)
\MSC 32S05 \sep 32S55.
\end{keyword}

\end{frontmatter}

%% \linenumbers

%% main text
%\newpage
\section*{Introduction}
Milnor's fibration theorem is a key-stone in singularity theory.
This is a result about the topology of the fibres of analytic
functions  near their critical points.

Let $X$ be an analytic subset of an open neighbourhood  $U$ of the
origin $\0$ in $\C^n$.
Given $f\colon(X,\0) \to (\C,0)$ holomorphic with a critical point
at $\0 \in X$ (in the stratified sense), there are two equivalent
ways of defining its \emph{Milnor fibration}.

The first   was given  in \cite[Thm.~4.8]{Mi2} for   $X$ smooth
and, as we show below, extends for arbitrary $X$:
let $\B_\e$ be a closed ball of sufficiently small radius $\e$ around $\0 \in \C^n$
and let $\s_\e$ be its boundary sphere, let $L_X=X\cap\s_\e$ be the link
of $X$ and let $L_f= f^{-1}(0)\cap\s_\e$
be the link of $f$ in $X$. Then the fibration is:
\begin{equation}\label{M-fib}
\phi = \frac{f}{|f|}\colon L_X \setminus L_f \longrightarrow \s^1.
\end{equation}
  This fibration theorem, for
general $X$, is implicit in the work of L\^e D\~ung Tr\'ang
\cite{Le:VCCAS} and a weaker form of it is also given in
 \cite[Thm.~3.9]{Durfee:NAS}.

The second version of the fibration theorem also originates in
Milnor's book \cite{Mi2}. For this, choose $\e >> \eta
>0$ sufficiently small and consider the {\it Milnor tube}
$$ N(\e,\eta) = X \cap \B_\e \cap f^{-1}(\partial \D_\eta) \,,$$
where $\D_\eta \subset \C$ is the disc of radius $\eta$ around $0
\in \C$. Then
\begin{equation}\label{ML-fib}
f \colon N(\e,\eta)  \longrightarrow \partial \D_\eta\,,
\end{equation}
is a fibre bundle, isomorphic to the previous bundle
(\ref{M-fib}).\footnote{Throughout this article we speak of
``equivalence'' of these (and similar) fibrations. This statement
must be made precise, since $N(\e,\eta)$ is compact and $L_X
\setminus L_f$ is not; the second fibration must be restricted to
$L_X $ minus an open regular neighbourhood of $L_f$ to have an
actual equivalence, but this determines the whole fibration on
$L_X \setminus L_f$ (see the proof of Theorem~\ref{Thm:fib.thm} in
Section \ref{Section-Proof of Theorem} below). The ``equivalence''
must be understood in this sense.} We notice that in his book, J.
Milnor proves only that for $X$ smooth, the fibres of
(\ref{ML-fib}) are isomorphic to those of (\ref{M-fib}), but he
does not prove that (\ref{ML-fib}) is actually a fibre bundle, a
fact he certainly knew when $f$ has an isolated critical point and
$X = \C^n$ (see \cite{Milnor:ISH}). H. Hamm in \cite{Hamm}
  extended Milnor's work to the case when $X\setminus f^{-1}(0)$ is
non singular, and  D.T. L\^e \cite[Thm.~(1.1)]{Le1} proved that
(\ref{ML-fib}) is a fibre bundle in full generality. We call
(\ref{ML-fib}) the Milnor-L\^e fibration, to distinguish it from
the equivalent fibration (\ref{M-fib}).

 In this work we improve, or refine, these
 fibration theorems in five directions, given by Theorems~\ref{Thm:Can.Dec},
\ref{Thm:fib.thm}, \ref{thm.spherefication}, \ref{Thm:blow.up} and
 \ref{theorem.real}.

The starting point, that originates in \cite {Se1, RS}, is to
notice that every holomorphic map $f$ as above determines a
canonical pencil of real analytic hypersurfaces, with axis $\,V =
f^{-1}(0)\,$, and this pencil gives rise to both fibrations
(\ref{M-fib}) and (\ref{ML-fib}) as we explain below.

For simplicity, denote also by $X$ the intersection $X \cap
\B_\e$. (See Section~\ref{sec:w.s} for the definition of
Whitney-strong stratifications.)

\begin{Thm}[\bf Canonical Decomposition]\label{Thm:Can.Dec}
For each $\t \in [0,\pi)$, let $\cL_\t$ be the line through $0$ in
$\R^2$ with an angle $\t$ (with respect to the $x$-axis). Set $\,V
= f^{-1}(0)\,$ and $X_\t =f^{-1}(\cL_\t)$. Then one has:
\begin{enumerate}[\bf i)]
\item
The $X_\t$ are all homeomorphic real analytic hypersurfaces of $X$
with singular set $\sing(V) \cup (X_\t \cap \sing(X))$. Their
union is the whole space $X$ and they all meet at $V$, which
divides each $X_\t$ in two homeomorphic halves.\label{it:decomposition}

\item
 If $\{S_\a\}$ is a Whitney stratification of $X$ adapted to
$V$, then the intersection of the strata with each $X_\t$
determines a Whitney-strong stratification of $X_\t$, and   for each
stratum $S_\a$ and each $X_\t$, the intersection $S_\a \cap X_\t$
meets transversally every sphere in $\B_\e$ centred at $\0$.\label{it:stratification}

\item
There is a uniform conical structure for all $X_\t$, {\it i.e.}, there is a (rugose) homeomorphism
\begin{equation*}
 h\colon (X \cap \B_\e, V \cap \B_\e) \to \bigl(\hbox{Cone}(L_X), \hbox{Cone}(L_f)\bigr),
\end{equation*}
which restricted to each $X_\t$ defines a homeomorphism
\begin{equation*}
 (X_\t \cap \B_\e) \cong \hbox{Cone}(X_\t \cap \s_\e).
\end{equation*}\label{it:cones}
\end{enumerate}
\end{Thm}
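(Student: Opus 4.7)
The starting observation is that $\cL_\t = \{z \in \C \mid \mathrm{Im}(e^{-i\t}z) = 0\}$, so $X_\t$ is cut out in $X$ by the real analytic function $g_\t := \mathrm{Im}(e^{-i\t}f)$. Away from $V \cup \sing(X)$ the differential $dg_\t$ is surjective (because $f$ is holomorphic and stratified-regular outside $\0$), giving $X_\t$ the structure of a real analytic hypersurface whose singular locus is $\sing(V) \cup (X_\t \cap \sing(X))$. The union $\bigcup_\t X_\t$ exhausts $X$ via the partition of $\C$ by the angular coordinate, and the common intersection of the $X_\t$'s is $V$, which cuts each $X_\t$ into two homeomorphic halves corresponding to $\mathrm{Re}(e^{-i\t}f) \gtreqless 0$. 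To exhibit the homeomorphism $X_\t \cong X_{\t'}$, I would construct a rugose vector field $\xi$ on $(X \cap \B_\e) \setminus V$, tangent to every stratum, satisfying $df(\xi) = i\, f$, so that $\xi$ lifts the angular rotation in the target. Its flow, extended by the identity on $V$, then takes $X_\t$ to $X_{\t+t}$.

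\textbf{Approach to (ii).} The key technical input is the transversality of each $S_\a \cap X_\t$ with small spheres centred at $\0$. I would establish this via the Curve Selection Lemma applied to the real analytic locus of critical points of $\rho(x) = \abs{x}^2$ on $S_\a \cap X_\t$: if such critical points accumulated at $\0$, one would obtain an analytic arc $\gamma \colon [0,\eta) \to X$ with $\gamma(0) = \0$ and $\gamma(t) \in S_\a \cap X_\t$, along which $\dot\gamma$ is simultaneously tangent to $S_\a$, annihilated by $dg_\t$, and orthogonal to $\gamma$. Comparing the orders of vanishing of $\abs{\gamma(t)}$ and $\abs{f(\gamma(t))}$ and exploiting the holomorphicity of $f$ restricted to $S_\a$ would yield a contradiction, in the classical Milnor--L\^e spirit. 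The Whitney-strong property of the induced stratification on $X_\t$ would then follow by combining Whitney's condition (b) on $\{S_\a\}$ with this transversality, uniformly in $\t$ thanks to the holomorphic structure of $f$.

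\textbf{Approach to (iii) and main obstacle.} Granted (ii), I would assemble a rugose vector field $\eta$ on $(X \cap \B_\e) \setminus \{\0\}$ that is tangent to each stratum $S_\a$, tangent to each $X_\t$ (equivalently $df(\eta)/f$ is real on $X \setminus V$), and satisfies $\eta(\rho) > 0$, by a partition of unity subordinate to the stratification. The backward flow of $\eta$, reparametrised by $\sqrt{\rho}$, furnishes the rugose homeomorphism $h$ from $X \cap \B_\e$ onto the cone on $L_X$, and is compatible with the pencil by construction, so it restricts on each $X_\t$ to a homeomorphism onto $\mathrm{Cone}(X_\t \cap \s_\e)$. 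The principal difficulty of the whole argument is the uniform transversality step in (ii): it is what drives the existence of both the rotational field $\xi$ and the radial field $\eta$, the Whitney-strong regularity of the induced stratification on each $X_\t$, and ultimately the uniform conical structure.
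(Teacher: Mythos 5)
Your overall architecture (pencil, curve-selection transversality, stratified vector fields, cone structure) is in the right spirit, but two steps as proposed have genuine gaps. First, in (ii) the Whitney-strong property of the induced stratification on $X_\t$ does \emph{not} follow from Whitney's condition $(b)$ on $\{S_\a\}$ together with transversality to spheres. The delicate triples are $(y,S_\a,S_\beta\cap X_\t)$ with $S_\a\subset V$, and for these one needs a quantitative estimate $\delta(T_{y'}S_\a,T_x(S_\beta\cap X_\t))\leq C\norm{y'-x}$; since the $X_\t$ are only \emph{real} hypersurfaces, the complex-analytic equivalence of $(b)$ and $(w)$ is unavailable, and the paper obtains the estimate from the strict Thom condition $(w_f)$ (Brian\c con--Maisonobe--Merle, Parusi\'nski), via $T_xf^\beta_x\subset T_x(S_\beta\cap X_\t)$. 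Without invoking $(w_f)$ (or an equivalent input) your claim is unsubstantiated. The same input reappears in (iii): a partition of unity does not by itself produce, near $V$, a \emph{rugose} stratified field that is simultaneously tangent to the strata of every $X_\t$ and transverse to spheres; rugosity (needed for integrability in the stratified setting) of the field obtained by projecting a radial field onto the fibres $f^\beta_x$ is exactly what the paper verifies using $(w_f)$.

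Second, in (i) the homeomorphisms between the \emph{full} $X_\t$'s (including $V$) are not delivered by the flow of a field $\xi$ with $df(\xi)=if$ ``extended by the identity on $V$''. Such a flow preserves $\abs{f}$ but is not tangent to the spheres, so it need not carry $X_\t\cap\B_\e$ onto $X_{\t'}\cap\B_\e$; more seriously, continuity of the extension across $V$ requires showing that the displacement tends to zero as $x\to V$, i.e.\ a {\L}ojasiewicz-type bound on $\norm{\xi}\sim\abs{f}/\norm{df|_{H}}$ along strata, which you do not address and which is the crux. This is precisely why the paper does not argue this way: its rotational field (Proposition~\ref{lem:vec.fld.sph}) only permutes the $X_\t\setminus V$, and the homeomorphism of the whole $X_\t$'s is instead obtained by blowing up $V$, proving that the induced stratification of $\tilde X$ is Whitney regular (Proposition~\ref{stratification-blow-up}, which itself uses the {\L}ojasiewicz inequality), and applying the Thom--Mather First Isotopy Lemma to $\tilde\Psi\colon\tilde X\to\RP^1$ (Theorem~\ref{Thm:blow.up}). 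Your sketch of the sphere-transversality in (ii) via the curve selection lemma does match the paper's alternative route (Lemmas~\ref{lem:curve.strat}--\ref{prop:arg.lambda} and Corollary~\ref{cor:lin.in.R}), but the order-of-vanishing argument must be carried out with the orthogonal projections onto the tangent spaces of the strata, using that $f|_{S_\a}$ is a submersion and that $S_\a$ meets small spheres transversally; as written it is only a plausibility statement.
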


\vskip.2cm

\begin{Thm}[\bf Fibration Theorem]\label{Thm:fib.thm}
One has a commutative diagram of  fibre bundles
\begin{equation*}
\xymatrix{
(X \cap \B_\e) \setminus V\ar[r]^-{\Phi}\ar[rd]_{\Psi}  &\s^1\ar[d]^{\pi}\\
& \RP^1
}
\end{equation*}
where $\Psi(x) = (\hbox{Re}(f(x)):\hbox{Im}(f(x)))$ with fibre
$(X_\t\cap \B_\e)\setminus V$, $\Phi(x)=\frac{f(x)}{\norm{f(x)}}$
and $\pi$ is the natural two-fold covering. The restriction of
$\Phi$ to the link $L_X \setminus L_f $ is the usual Milnor
fibration $\phi$ in (\ref{M-fib}), while the restriction of $\Phi$ to the
Milnor tube $f^{-1}(\partial \D_\eta) \cap \B_\e$ is the
Milnor-L\^e fibration (\ref{ML-fib})   (up to multiplication by a
constant), and these two fibrations are equivalent.
\end{Thm}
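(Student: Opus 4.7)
The plan is to first establish that $\Psi$ is a fibre bundle, to deduce the same for $\Phi$ via the two-fold covering $\pi$, and then to read off the two special restrictions directly from the definitions. Commutativity of the diagram is automatic: if $f(x) = r e^{i\t}$ with $r > 0$, then $\Phi(x) = e^{i\t}$ while $\Psi(x) = [\cos\t : \sin\t] = \cL_\t$, and $\pi(e^{i\t}) = \cL_\t$, so $\pi \circ \Phi = \Psi$ pointwise on $(X \cap \B_\e) \setminus V$.

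To show that $\Psi$ is locally trivial I would appeal to a Thom--Mather/Verdier-type first isotopy lemma for rugose vector fields. Start with a Whitney stratification $\{S_\a\}$ of $X$ adapted to $V$; on $(X \cap \B_\e) \setminus V$, part~(ii) of Theorem~\ref{Thm:Can.Dec} asserts that $\{S_\a \cap X_\t\}$ stratifies each $X_\t$, which is exactly the statement that every restriction $\Psi|_{S_\a \setminus V}$ is a submersion onto $\RP^1$. Given any local coordinate vector field on $\RP^1$, a partition of unity together with the standard rugose lifting procedure then produces a continuous stratified vector field on $(X \cap \B_\e)\setminus V$, tangent to every stratum, which projects onto the chosen vector field. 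Integrating yields a local trivialisation of $\Psi$ with fibre $(X_\t \cap \B_\e) \setminus V$; the flow avoids $V$ and does not reach $\s_\e$ thanks to the transversality in part~(ii) and the uniform conical structure in part~(iii) of Theorem~\ref{Thm:Can.Dec}. Since $\pi$ is a connected two-fold cover and each $X_\t \setminus V$ is, by Theorem~\ref{Thm:Can.Dec}(i), the disjoint union of two homeomorphic halves lying over the two antipodal preimages of $\cL_\t$ in $\s^1$, every local trivialisation of $\Psi$ splits canonically into a local trivialisation of $\Phi$, and $\Phi$ is therefore also a fibre bundle.

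Identifying the two restrictions is then a formality. On the link $L_X \setminus L_f \subset \s_\e$ the formula $\Phi = f/\abs{f}$ is precisely the Milnor map~(\ref{M-fib}); on the Milnor tube $f^{-1}(\partial \D_\eta) \cap \B_\e$ one has $\abs{f}\equiv\eta$, so $\Phi(x) = \eta^{-1} f(x)$, i.e.\ (\ref{ML-fib}) composed with the dilation $z \mapsto z/\eta$. Equivalence of these two restricted bundles follows because each is a restriction of the single global fibre bundle $\Phi$; concretely, I would exhibit a fibre-preserving diffeomorphism by using the conical structure of Theorem~\ref{Thm:Can.Dec}(iii), applied inside each half-fibre of $\Phi$, to produce a rugose radial vector field whose flow carries the tube piece of the fibre onto the corresponding sphere piece (away from an open collar neighbourhood of $L_f$, in keeping with the footnote).

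The main technical obstacle is the very first step: the rugose lifting of vector fields across the singular strata of $X$ so that the resulting flow is a continuous fibre-preserving isotopy. This is precisely where the Whitney-strong conclusion of Theorem~\ref{Thm:Can.Dec}(ii) becomes indispensable, as it supplies the stratified transversality required to invoke the first isotopy lemma in this singular ambient setting; without that refinement of the pencil stratification one would only obtain continuity of the total-space map, not a genuine local product structure.
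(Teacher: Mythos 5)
Your outline founders at the very first substantive step: local triviality of $\Psi$ (equivalently $\Phi$). The space $(X\cap\B_\e)\setminus V$ is not compact near $V$, so no Thom--Mather-type isotopy lemma applies, and the rugose lifting of Proposition~\ref{rugose lifting} only produces a \emph{local} flow. The real difficulty, which your argument does not address, is completeness: an arbitrary rugose lift of a vector field on $\RP^1$ (or of $\bar w(z)=iz$ on $\s^1$) may have integral curves that reach $V$ in \emph{finite} time, precisely because $V$ is where $\Psi$ and $\Phi$ cease to be defined. Neither the transversality of the strata $S_\a\cap X_\t$ to the spheres (Theorem~\ref{Thm:Can.Dec}--ii)) nor the uniform conical structure (Theorem~\ref{Thm:Can.Dec}--iii)) prevents this: those statements control behaviour in the radial direction toward $\0$, not the behaviour of a $\t$-rotating flow as it approaches $V$ at fixed distance from the origin. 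This is exactly the point of Proposition~\ref{lem:vec.fld.sph} in the paper: the lift $w$ of $\bar w$ via the spherefication $\mathfrak{F}$ depends on the choice of a horizontal subbundle, and that choice is made carefully (in two cases, using the stratified Milnor lemmas~\ref{lem:curve.strat} and~\ref{prop:arg.lambda} on $\pi_{\alpha_x}(\grad\log\tilde f(x))$) so that $\abs{\arg\inpr{w(x)}{i\grad\log\tilde f(x)}}<\pi/4$, whence $\abs{d\log\abs{f(p(t))}/dt}<1$ along the flow and $\abs{f(p(t))}$ stays bounded away from $0$ in finite time. Without this (or without compactifying, as the paper's alternative route does by passing to the real blow-up $\tilde X$ of Theorem~\ref{Thm:blow.up}, where $\tilde\Psi$ \emph{is} proper and the isotopy lemma does apply), your trivialisations simply do not exist as stated.

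A second, smaller gap is in the equivalence of the two fibrations. ``Each is a restriction of the single global bundle $\Phi$'' proves nothing by itself: one must exhibit a fibre-preserving homeomorphism between the tube and (a deformation of) the link complement. The paper does this in two steps: first a vector field $\tilde v$ (Proposition~\ref{lem:uni.con.str.0}) that is simultaneously radial, tangent to the $X_\t\setminus V$, \emph{and transverse to all Milnor tubes} $f^{-1}(\partial\D_\eta)$ --- the conical-structure field of Theorem~\ref{Thm:Can.Dec}--iii) alone does not have this last property, which is why a new field $v_f+v_{\mathfrak F}$ is built --- carrying $f^{-1}(e^{i\t})\cap\B_\e$ onto the part of $\phi^{-1}(e^{i\t})$ with $\abs{f}\ge\eta$; and second, a lift via $f$ (using the Thom $a_f$ condition) of the radial field on $\D_\eta\setminus\{0\}$ to $T(\e,\eta)=\s_\e\cap f^{-1}(\D_\eta\setminus\{0\})$, to absorb the collar near $L_f$. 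Your sketch gestures at the first step but with the wrong vector field, and omits the role of the Thom condition in the second.
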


\vskip.2cm

To prove Theorem~\ref{Thm:fib.thm} we introduce in
Section~\ref{subsec:spherification} the \emph{spherefication} of
$f$, which is an auxiliary function  defined by
$\mathfrak{F}(x)=\norm{x}\Phi(x)= \norm{x}
\,\frac{f(x)}{\norm{f(x)}}$. This map has the property that its
``Milnor tubes'' are precisely the ``Milnor fibrations on the
spheres''. More precisely we have the following Fibration Theorem.

\begin{Thm}\label{thm.spherefication}
 For $\e >0$ sufficiently small, one has a fibre bundle
\begin{equation*}
 \mathfrak{F} \colon \big( (X \cap \B_\e) \setminus V \big)  \longrightarrow
(\D_\e \setminus \{0\})\,,
\end{equation*}
taking $x$ into $\norm{x} \,\frac{f(x)}{\norm{f(x)}}$, where
$\D_\e$ is the disc in $\R^2$ centred at $0$ with radius $\e$.
Furthermore, the restriction of $\mathfrak{F}$ to each sphere
around $\0$ of radius $\e' \le \e$ is a fibre bundle over the
corresponding circle of radius $\e'$, and this  is the Milnor
fibration $\phi$ in \eqref{M-fib} up to multiplication by a
constant.
\end{Thm}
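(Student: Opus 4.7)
The plan is to realise $\mathfrak{F}$ as the pair $(\norm{x},\Phi(x))$, where $\Phi(x)=f(x)/\norm{f(x)}$ is the Milnor argument map of Theorem~\ref{Thm:fib.thm}, and then transport the bundle structure of $\Phi$ to $\mathfrak{F}$ by means of polar coordinates on the target $\D_\e\setminus\{0\}$. Since $\abs{\mathfrak{F}(x)}=\norm{x}\in(0,\e]$, the map $\mathfrak{F}$ really does land in $\D_\e\setminus\{0\}$. Under the homeomorphism $\D_\e\setminus\{0\}\cong(0,\e]\times\s^1$ given by $z\mapsto(\abs{z},z/\abs{z})$, the map reads $x\mapsto(\norm{x},\Phi(x))$, so the fibre over a point $(r_0,\t_0)$ is precisely the Milnor fibre of $\phi$ on the sphere $\s_{r_0}$ over the direction $\t_0$.

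For the sphere-wise claim, I would observe that the restriction of $\mathfrak{F}$ to $(X\cap\s_{\e'})\setminus V$ is simply $x\mapsto \e'\,\Phi(x)$. Since the construction of $\Phi$ in Theorem~\ref{Thm:fib.thm} is valid for every radius $\e'\le\e$---the only requirement being that $\e'$ be small enough for transversality of the strata of the $X_\t$ with the sphere $\s_{\e'}$, which is part ii) of Theorem~\ref{Thm:Can.Dec}---this restriction is, up to the scalar factor $\e'$, the Milnor fibration \eqref{M-fib} on $\s_{\e'}$. This handles the ``Furthermore'' part of the statement.

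To globalise to the whole punctured ball, I would invoke the uniform conical structure from part iii) of Theorem~\ref{Thm:Can.Dec}. The rugose homeomorphism $h$ provides ``polar coordinates'' $(r,y)\in(0,\e]\times L_X$ on $(X\cap\B_\e)\setminus\{\0\}$, and because $h$ restricts to a cone homeomorphism on each member $X_\t$ of the pencil, the radial rays stay inside a single $X_\t$. On $X_\t\setminus V$ the argument $\Phi$ takes only the two values $\pm e^{i\t}$ (being constant on each of the two halves into which $V$ divides $X_\t$, by part i) of the same theorem), so $\Phi$ is constant along every radial ray. In the coordinates $(r,y)$ the map $\mathfrak{F}$ therefore reduces to the product $(r,y)\mapsto(r,\Phi(y))$, and any local trivialization of the Milnor fibration $\phi\colon L_X\setminus L_f\to\s^1$ pulls back, crossed with the interval $(0,\e]$, to a local trivialization of $\mathfrak{F}$. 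The main technical point is precisely this compatibility between $h$ and the pencil, so that $\Phi$ descends to a function of the link coordinate alone; once that is in hand---it is essentially the content of part iii)---the bundle structure of $\mathfrak{F}$ follows from the product construction, and the identification with $\phi$ on each sphere up to a scalar is immediate.
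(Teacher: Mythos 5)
Your overall architecture is half right: the reduction of the global statement to the sphere-wise one via the uniform conical structure of Theorem~\ref{Thm:Can.Dec}--iii) is exactly the route the paper takes (the cone coordinates respect each $X_\t$, $\Phi$ is constant along the flow lines, and local trivialisations on the link cross with the radial interval give local trivialisations of $\mathfrak{F}$ over $\C\setminus\{0\}$). The genuine gap is in the sphere-wise step, which you dispose of by saying that the restriction of $\mathfrak{F}$ to $(X\cap\s_{\e'})\setminus V$ is $\e'\,\Phi$ and that ``the construction of $\Phi$ in Theorem~\ref{Thm:fib.thm}'' already yields the Milnor fibration on every sphere. Within this paper that is circular: the assertion that $\Phi=f/\abs{f}$ restricted to $(X\cap\s_{\e'})\setminus V$ is a locally trivial fibre bundle for \emph{arbitrary singular} $X$ is precisely the content being established (it is the extension of \eqref{M-fib} beyond the smooth case), and in the paper the corresponding clause of Theorem~\ref{Thm:fib.thm} is deduced \emph{from} Theorem~\ref{thm.spherefication}, not the other way around. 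So you are assuming the heart of the statement.

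Moreover, no independent argument for that local triviality is supplied, and the one ingredient you do cite is not enough: transversality of the strata $S_\a\cap X_\t$ with the spheres (Theorem~\ref{Thm:Can.Dec}--ii), equivalently Lemma~\ref{lem:g.sub}) only makes $\mathfrak{F}$ a submersion on each stratum, and since the fibres are non-compact (we have removed $V$) one cannot conclude by Ehresmann or Thom--Mather. The paper fills exactly this hole with Proposition~\ref{lem:vec.fld.sph}: a \emph{complete}, stratified, rugose lift $w$ of the rotational field $\bar w(z)=iz$, tangent to all spheres centred at $\0$, whose flow permutes the $X_\t\setminus V$. Completeness --- the fact that an orbit $p(t)$ cannot reach $V$ in finite time, i.e.\ that $\abs{f(p(t))}$ stays bounded away from zero --- is obtained by choosing the horizontal subbundle in Verdier's lifting carefully and invoking the stratified analogues of Milnor's Lemmas~4.3--4.4 (Lemmas~\ref{lem:curve.strat} and \ref{prop:arg.lambda}), which give $\abs{\arg\inpr{w(x)}{i\grad\log \tilde{f}(x)}}<\pi/4$ and hence $\abs{\Re\inpr{w(x)}{\grad\log \tilde{f}(x)}}<1$. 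Without this control near $V$ (or some equivalent substitute), your proposal does not prove that the restriction of $\mathfrak{F}$ to a sphere is a fibre bundle, and therefore does not prove the theorem; the radial globalisation you describe is fine once that step is in place.
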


Our proofs actually show:

\vskip .2cm

\begin{Cor}\label{Cor:Mil.fib}
Let $f\colon (X,\0) \to (\C,0)$ be as above, a holomorphic map
with a critical point at $\0 \in X$, and consider its Milnor
fibration
$$\phi = \frac{f}{|f|}\colon L_X \setminus L_f \longrightarrow
\s^1 \,.$$ If the germ $(X,\0)$ is irreducible, then we have that
every pair of fibres of $\phi$ over antipodal points of $\s^1$ are
glued together along the link $L_f$ producing the link of a real
analytic hypersurface $X_\t$, which is homeomorphic to the link of
$\{Re \,f = 0\,\}$. Moreover, if both $X$ and $f$ have an isolated
singularity at $\0$, then   this homeomorphism is in fact a
diffeomorphism and the link of each $X_\t$ is diffeomorphic to the
double of the Milnor fibre of $f$ regarded as a smooth manifold
with boundary $L_f$.
\end{Cor}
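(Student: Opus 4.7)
The plan is to read off the link of $X_\t$ directly from the definition $X_\t=f^{-1}(\cL_\t)$ and then invoke the canonical decomposition of Theorem~\ref{Thm:Can.Dec}.

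First I would split the line $\cL_\t=\cL_\t^{+}\cup\{0\}\cup\cL_\t^{-}$ into its two open rays at angles $\t$ and $\t+\pi$. Under $z\mapsto z/\abs{z}$ these rays collapse to the antipodal points $e^{i\t}$ and $e^{i(\t+\pi)}$. Taking preimages under $f$ and intersecting with $\s_\e$ gives the set-theoretic decomposition
\begin{equation*}
 X_\t\cap\s_\e \;=\; \phi^{-1}(e^{i\t}) \;\cup\; L_f \;\cup\; \phi^{-1}(e^{i(\t+\pi)}),
\end{equation*}
so the problem reduces to understanding how these three pieces fit together.

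Next I would use the irreducibility of $(X,\0)$ to show that each fibre $\phi^{-1}(e^{\pm i\t})$ accumulates on \emph{every} point of $L_f$. For an irreducible holomorphic germ with non-constant $f$, the restriction of $f$ is an open surjection onto a neighbourhood of $0\in\C$, so both rays $\cL_\t^{\pm}$ have non-empty preimage arbitrarily close to $V$. Combining this with the uniform conical structure of Theorem~\ref{Thm:Can.Dec}~iii) applied to $X_\t$ --- which identifies the cone on $L_f$ with the axis $V\cap\B_\e$ of the two halves $f^{-1}(\cL_\t^{\pm})\cap\B_\e$ --- forces the boundary of each half on $\s_\e$ to be all of $L_f$. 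Hence $X_\t\cap\s_\e$ is the gluing asserted in the statement. The homeomorphism with the link of $\{\mathrm{Re}\,f=0\}=X_{\pi/2}$ is then immediate from Theorem~\ref{Thm:Can.Dec}~i).

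For the ``moreover'' part, if both $X$ and $f$ have isolated singularity at $\0$ then $L_X$ is a smooth manifold, $\phi$ is a smooth fibre bundle by the classical Milnor--L\^e theory, and a standard Ehresmann argument shows that the closure of each fibre in $L_X$ is a smooth compact manifold with boundary $L_f$. Two such copies glued along $L_f$ form, by definition, the double of the Milnor fibre. In this smooth regime the rugose vector field underlying the proof of Theorem~\ref{Thm:Can.Dec}~i) is smooth away from the isolated singular point, and integrating it yields a diffeomorphism --- rather than merely a homeomorphism --- between the links of $X_\t$ and of $X_{\pi/2}$. The main obstacle is the preceding step that both $\phi^{-1}(e^{\pm i\t})$ abut on \emph{all} of $L_f$: without irreducibility a component of $X$ could sit inside $V$, or $f$ could miss one of the two rays on some branch, so that a portion of $L_f$ would touch only one of the two fibres. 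Handling this cleanly requires the open-mapping principle on irreducible germs together with the conical structure; everything else then drops out of Theorem~\ref{Thm:Can.Dec}.
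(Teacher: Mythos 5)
Your overall route is the paper's own: the decomposition $X_\t\cap\s_\e=(E_\t\cap\s_\e)\cup L_f\cup(E_{\t+\pi}\cap\s_\e)$, the use of irreducibility to make $f$ open onto a neighbourhood of $0\in\C$ near every point of $V$, so that each half accumulates on all of $L_f$ (this is exactly Lemma~\ref{lem:acumulacion}), and the smooth upgrade in the isolated-singularity case via a smooth vector field (the paper does this through the blow-up, Remark~\ref{Rk:iso-sing}, and deduces the homeomorphism of the links from Theorem~\ref{Thm:blow.up} rather than from Theorem~\ref{Thm:Can.Dec}; same machinery). Your use of the uniform conical structure to pass the accumulation statement from the ball to the sphere is fine, and if anything more explicit than the paper.

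The genuine gap is in the final step. It is not true ``by definition'' that two copies of the closed Milnor fibre glued along their common boundary $L_f$ form the double: the diffeomorphism type of a union of two manifolds-with-boundary along a boundary identification depends on that identification (two solid tori glued along a torus give $\s^3$, $\s^2\times\s^1$ or a lens space, only one of which is a double). Inside $K_\t$ you have $\overline{E_\t\cap\s_\e}\cup_{L_f}\overline{E_{\t+\pi}\cap\s_\e}$, glued by the identity on $L_f$, but the two halves are two \emph{different} submanifolds; to conclude that $K_\t$ is the double of the Milnor fibre you must identify $\overline{E_{\t+\pi}\cap\s_\e}$ with $\overline{E_\t\cap\s_\e}$ by a diffeomorphism whose restriction to $L_f$ is (isotopic to) the identity. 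The paper supplies this: the identification is the time-$\pi$ map of a smooth flow (the lifted field of Proposition~\ref{lem:vec.fld.sph}, made smooth and extended over $V$ on the blow-up as in Remark~\ref{Rk:iso-sing}), hence its boundary restriction is isotopic to the identity, and only then does $K_\t\cong$ double follow. A secondary point: your appeal to ``a standard Ehresmann argument'' for the claim that the closure of each fibre of $\phi$ is a smooth compact manifold with boundary $L_f$ is a mis-attribution. Ehresmann gives local triviality of $\phi$ away from $L_f$ and nothing about the behaviour at $L_f$; the collar/open-book structure along $L_f$ is precisely the ``folklore'' open-book statement the paper invokes, and it needs the behaviour of $f/\abs{f}$ near $V$ (transversality of $L_f$, a Thom-type condition), not just the submersion property on $L_X\setminus L_f$.
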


The above  hypothesis of $X$ being irreducible can be relaxed,
assuming only that $X$ and $V$ do not share an irreducible
component at the origin (see Remark \ref{example-Jawad}).

Notice that for $\t =\pi/2$ the corresponding variety $X_\t$ is
$\{Re \, f = 0 \}$. Thus for instance, for the map $(z_1,z_2)
\buildrel{f} \over {\mapsto} z_1^2 + z_2^q$ one gets that the link
of $Re \,f$ is a closed, oriented surface  in the 3-sphere, union
of the Milnor fibres over the points $\pm i$; an easy computation
shows that it has genus $q-1$. It would be interesting to study
geometric and topological properties of the 4-manifolds one gets
in this way, by considering the link of the  hypersurface in
$\C^3$ defined by the real part of a holomorphic function with an
isolated critical point. For example, for the map $(z_1,z_2,z_3)
\buildrel{f} \over {\mapsto} z_1^2 + z_2^3 + z_3^5$, the
corresponding 4-manifold is the double of the famous $E_8$
manifold with boundary Poincare's homology 3-sphere.

\vskip.2cm

In order to complete the proof of Theorem~\ref{Thm:Can.Dec} we actually show (Section 3):

\begin{Thm}\label{Thm:blow.up}
Let ${\tilde X}$ be the   space obtained by the real blow-up of
$V$, {\it i.e.}, the blow-up of $(Re(f), Im(f))$. The projection
${\tilde \Psi}: {\tilde X} \rightarrow \R\P^1$ is a topological
 fibre bundle with fibre $X_\t$.
\end{Thm}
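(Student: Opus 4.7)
The plan is to realize $\tilde X$ concretely as a closed subset of $(X \cap \B_\e) \times \RP^1$ and to produce local trivializations of $\tilde\Psi$ by flowing along a rugose vector field on $\tilde X$ that lifts $\partial/\partial\t$ on $\RP^1$.

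First, I identify $\tilde X$ set-theoretically. By definition $\tilde X$ is the closure in $(X \cap \B_\e) \times \RP^1$ of the graph
\[
\Gamma \;=\; \bigl\{\,(x,\,[\hbox{Re}(f(x)):\hbox{Im}(f(x))]) \,:\, x \in (X \cap \B_\e) \setminus V\,\bigr\}.
\]
By Theorem~\ref{Thm:Can.Dec}~(i), every point of $V$ is a limit of points of $X_\t \setminus V$ for every $\t \in [0,\pi)$, since $V$ divides each $X_\t$ into two halves whose closures contain $V$. Hence the closure adds exactly $V \times \RP^1$, so
\[
\tilde X \;=\; \bigl((X \cap \B_\e) \setminus V\bigr) \,\sqcup\, (V \times \RP^1)
\]
with the blow-up topology, and $\tilde\Psi$ is projection onto the second factor. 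Its fibre over $[\cos\t:\sin\t]$ is canonically identified with $X_\t$: off $V$ via the graph, and on $V$ via the constant section $x \mapsto (x,[\cos\t:\sin\t])$.

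Next, pick a small open arc $U \subset \RP^1$ parametrized by $\t \in (\t_0-\delta,\t_0+\delta)$. By Theorem~\ref{Thm:fib.thm}, $\Psi$ restricted to $\Psi^{-1}(U)$ is a stratified fibre bundle, and its proof yields a rugose, stratified vector field $\xi$ on $\Psi^{-1}(U)$ with $\Psi_*\xi = \partial/\partial\t$. I extend $\xi$ to a vector field $\tilde\xi$ on $\tilde\Psi^{-1}(U)$ by declaring that
\[
\tilde\xi(x,\,[\cos\t:\sin\t]) \;=\; (0_x,\,\partial/\partial\t) \qquad \hbox{for } x \in V.
\]
Granting that this extension is rugose along the exceptional divisor, its flow for time $\t-\t_0$ gives a homeomorphism
\[
\Theta \colon X_{\t_0} \times U \longrightarrow \tilde\Psi^{-1}(U), \qquad \tilde\Psi\circ\Theta = \hbox{pr}_2,
\]
which is the identity on $V \times U$. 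Two such arcs cover $\RP^1$, proving that $\tilde\Psi$ is a topological fibre bundle with fibre $X_\t$.

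The main obstacle is the rugose extension of $\xi$ across the exceptional divisor $V \times \RP^1$. I would select the $\xi$ of Theorem~\ref{Thm:fib.thm} in a form adapted to the blow-up: in local coordinates $(v,r,\t)$ on $X \setminus V$---with $v\in V$, $r$ a positive function vanishing on $V$, and $\t$ the $\RP^1$-angle of $\Psi$---one arranges that the ``horizontal'' component of $\xi$ (transverse to $\partial/\partial\t$) vanishes on $V$. Such coordinates lift to coordinates on $\tilde X$ near the exceptional divisor, where $\{r=0\}$ is precisely $V \times \RP^1$ and the limit of $\xi$ as $r \to 0$ is the prescribed boundary value $(0_x,\partial/\partial\t)$. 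The required control on $\xi$ near $V$ is provided by the Whitney-strong stratification of each $X_\t$ given in Theorem~\ref{Thm:Can.Dec}~(ii) and by the canonical pencil structure, which together give the uniform transversality needed for the standard integration theorems for rugose vector fields to produce a continuous flow on $\tilde X$.
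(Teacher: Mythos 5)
There is a genuine gap, and it sits exactly where you flag it: the continuity (rugosity) of the extended vector field across the exceptional divisor, equivalently the continuity of its flow on $\tilde X$, is the whole difficulty of the theorem, and your sketch for it does not work in the generality considered here. Your justification appeals to local coordinates $(v,r,\t)$ near $V$ in which the horizontal component of $\xi$ can be arranged to vanish on $V$; but $X$ and $V$ are arbitrary (singular) analytic sets, so no such product-type coordinates exist near singular points of $V$ or of $X$, and the Whitney-strong stratification of each individual $X_\t$ (Theorem~\ref{Thm:Can.Dec}~\ref{it:stratification})) gives no uniform control in the direction transverse to the pencil, which is precisely what continuity of the flow at a point $(v,\t_0)$ of $V\times\RP^1$ requires: one must show that the horizontal displacement of trajectories tends to $0$ uniformly as the starting point tends to $v$, not merely that trajectories starting near $V$ stay near $V$. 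The vector field produced in the proof of Theorem~\ref{Thm:fib.thm} (Proposition~\ref{lem:vec.fld.sph}) is only shown to be complete on $(X\cap\B_\e)\setminus V$ (via the bound on $\frac{d}{dt}\log\abs{f(p(t))}$); nothing in its construction controls its size or direction relative to a fixed point of $V$, so ``granting that this extension is rugose'' is assuming the theorem's hard content. This is why the paper takes a different route: it proves that the pulled-back partition $\Sigma_\a=e_V^{-1}(S_\a)$ is a Whitney stratification of $\tilde X$ (Proposition~\ref{stratification-blow-up}, which is the technical heart, using limits of tangent spaces, the {\L}ojasiewicz inequality, and---essentially---the holomorphy of $f$ through the orthogonality and equal length of $\grad f_1$ and $\grad f_2$), checks that $\tilde\Psi$ is a proper stratified submersion on $\tilde X\cap(\bar\B_\e\times\RP^1)$ using the transversality of the $X_\t$ to the spheres (Lemma~\ref{it:St.trans.Sp}), and then invokes the Thom--Mather First Isotopy Lemma, whose controlled lifts replace the ad hoc extension you propose. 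Your argument does not engage with where holomorphy of $f$ enters, which is a warning sign, since the analogous statement is delicate for general real analytic maps.

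A secondary error: your opening identification of $\tilde X$ as the closure of the graph of $\Psi$, with the claim (attributed to Theorem~\ref{Thm:Can.Dec}~\ref{it:decomposition})) that every point of $V$ is a limit of points of $X_\t\setminus V$ for every $\t$, is false in general. The paper defines $\tilde X$ as the incidence variety $\{Re(f)t_2-Im(f)t_1=0\}\subset X\times\RP^1$, which always contains $V\times\RP^1$; the closure of the graph can be strictly smaller, as in Remark~\ref{example-Jawad} ($X=\{xy=0\}$, $f=x$), where points of $V\setminus\{\0\}$ are not limits of points of $X\setminus V$ at all. The accumulation statement you want is Lemma~\ref{lem:acumulacion}, which needs $(X,\0)$ irreducible (or at least that $X$ and $V$ share no component) and is used in the paper only for Corollary~\ref{Cor:Mil.fib}, not for Theorem~\ref{Thm:blow.up}. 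This particular slip is harmless if you simply adopt the paper's definition of $\tilde X$, but as written your fibre identification rests on it.
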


This  result  strengthens Theorem \ref{Thm:fib.thm} and implies
that all the hypersurfaces $X_\t$ are homeomorphic: Are they
actually analytically equivalent? we do not know the answer. The
proof of Theorem \ref{Thm:blow.up}   (in Section
\ref{section-Blow-up}) can be refined to prove also
Theorem~\ref{Thm:fib.thm}. However we prefer to give (in
Section~\ref{section-Fibration}) a direct proof of
Theorem~\ref{Thm:fib.thm}, which is elementary and
 throws  light into  the understanding of Milnor-type
fibrations for  real analytic maps, that we envisage in Section
\ref{section-Real-case}: We consider   real analytic map-germs
from $\R^{n+2}$ into  $\R^{2}$. One has a canonical pencil
$(X_{\t})$ associated to $f$ as in the holomorphic case, but now
the pencil may not have the uniform conical structure of Theorem
\ref{Thm:Can.Dec}. If it does, at least away from $V$, then we say
that $f$ is $d$-regular; we give various examples of families of
such singularities.
 We prove:

\begin{Thm}\label{theorem.real} Let $f:(U,\0) \to (\R^{2},0)$ be a locally surjective
real analytic map
  with an isolated critical value at $0 \in \R^2$, $U$ an open
neighbourhood of $\, \0$ in $\R^{n+2}$ and $V=f^{-1}(0)$. Assume
further that at $\0$, $f$ has the Thom $a_f$ property, it is
d-regular and  $\hbox{dim} \,V >0$. Then:

 \begin{enumerate}[i)]

\item One has a Milnor-L\^e fibration (a fibre bundle)
$$
f \colon N(\e,\eta)  \longrightarrow \partial \D_\eta\,,
$$
where $ N(\e,\eta) = \B_\e \cap f^{-1}(\partial \D_\eta) \,,$ is a
Milnor tube for $f$; $\D_\eta \subset \R^2$ is the disc of radius
$\eta$ around $0 \in \R^2$, $\e >> \eta > 0$. In fact this same
statement holds for $   \B_\e \cap f^{-1}( \D_\eta \setminus {0})
\,,$ which fibres over $\D_\eta \setminus
{0}$.\label{it:Milnor.Le}

\item
For
every sufficiently small $\e >0$  one has a commutative diagram of
fibre bundles,
\begin{equation*}
\xymatrix{
\B_\e \setminus V\ar[r]^-{\Phi}\ar[rd]_{\Psi}  &\s^1\ar[d]^{\pi}\\
& \RP^1
}
\end{equation*}
which induces by restriction,   a   fibre bundle  $\s_\e \setminus K_\e \buildrel{\phi}
\over \rightarrow \s^1$ with $\phi = f/\vert {f} \vert $ and $K_\e=V\cap \s_\e$.\label{it:Strong.Milnor}

\item The two fibrations above, one on the Milnor tube, the other on the sphere,
are equivalent.\label{it:equivalence}
\end{enumerate}
\end{Thm}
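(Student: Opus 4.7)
The plan is to establish the three parts sequentially, relying on the Thom $a_f$ hypothesis for part (i), on $d$-regularity for part (ii), and on both together for part (iii). The overall structure mirrors the complex case treated in Theorems~\ref{Thm:fib.thm} and \ref{thm.spherefication}, but the automatic transversality available over $\C$ must be replaced by these two explicit geometric assumptions.

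For (i), since $0$ is an isolated critical value, $f$ is a stratified submersion on $\B_\e \cap f^{-1}(\D_\eta \setminus \{0\})$ once $\eta$ is small enough. A standard curve-selection argument using Thom $a_f$ at each stratum of $V$ then shows that, for $\e$ sufficiently small, every fibre $f^{-1}(t)$ with $0<|t|<\eta$ meets $\s_\e$ transversally. Hence $f$ restricted to $N(\e,\eta)$ is a proper stratified submersion onto $\partial\D_\eta$, and the first Thom--Mather isotopy lemma furnishes the fibration; the same argument applied to the thicker domain $\B_\e \cap f^{-1}(\D_\eta \setminus \{0\})$ yields the stronger version over $\D_\eta \setminus \{0\}$.

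For (ii), I would define $\Psi(x) = (\hbox{Re}\,f(x):\hbox{Im}\,f(x))$, whose fibres are the punctured pencil elements $X_\t \setminus V$. $d$-regularity says that each $X_\t$ meets every sphere centred at $\0$ transversally and that the pencil admits a uniform conical structure away from $V$. Lifting an angular vector field on $\RP^1$ through the submersion $f|_{\B_\e \setminus V}$ and projecting the lift onto a rugose vector field tangent to each $X_\t$ via this cone structure supplies local trivializations of $\Psi$, making it a fibre bundle. Then $\Phi = f/|f|$ satisfies $\pi\circ\Phi = \Psi$, so it is the pullback of $\Psi$ along the 2-fold cover $\pi$, and hence is itself a fibre bundle fitting into the stated commutative diagram. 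Transversality of $X_\t$ with $\s_\e$, again from $d$-regularity, ensures that restriction to $\s_\e \setminus K_\e$ produces the sub-fibre-bundle $\phi = f/|f|$.

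For (iii), I plan to follow the spherefication approach of Theorem~\ref{thm.spherefication} and construct a stratified vector field $\xi$ on $\B_\e \setminus V$ that is tangent to every $X_\t$ (so its flow preserves $\Phi$) and whose time-one map carries points of the Milnor tube onto $\s_\e$. $d$-regularity supplies the radial-like component of $\xi$ along each $X_\t \setminus V$, while Thom $a_f$ controls its behaviour as one approaches $V$ and across the boundary of the tube, where $\xi$ must be compatible with the projection to $\partial\D_\eta$. The principal obstacle is merging these two pieces into a single globally defined rugose vector field without destroying tangency to the $X_\t$'s or the necessary transversality to the levels of $|f|$ and $\|x\|$: in the real case, equivalence of the tube and sphere fibrations is not automatic, and the two hypotheses supply information in overlapping but distinct regions. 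A partition of unity subordinate to a neighbourhood of the tube boundary, together with the rugose control afforded by $d$-regularity, should produce the required $\xi$; its time-one map then conjugates the Milnor--L\^e and sphere fibrations, establishing (iii).
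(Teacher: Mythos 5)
Your parts \ref{it:Milnor.Le}) and \ref{it:equivalence}) are in the spirit of the paper: \ref{it:Milnor.Le}) is the standard Thom--Mather argument (the paper simply quotes it as known), and your plan for \ref{it:equivalence}) --- a vector field tangent to every $X_\t$, transverse to the spheres and to the tubes, whose flow carries the tube fibration to the sphere --- is exactly the role of Proposition~\ref{lemma} in the paper (there it is obtained by adding smooth lifts of the radial field via $f$ and via the spherefication $\mathfrak{F}$, which is a submersion precisely because $f$ is $d$-regular; since $U$ is smooth no rugosity or partition-of-unity patching is needed). The genuine gap is in your treatment of \ref{it:Strong.Milnor}). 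First, the sentence about lifting an angular field through $f$ and then ``projecting the lift onto a rugose vector field tangent to each $X_\t$'' cannot work as stated: a field tangent to the fibres $X_\t\setminus V$ of $\Psi$ can never trivialise $\Psi$; what you need is a field \emph{transverse} to the pencil whose flow rotates the $X_\t$'s. Second, and more seriously, even with that correction the argument is incomplete: $\Psi$ is a non-proper submersion, and the whole difficulty is that the rotating flow must be defined for a uniform time, i.e.\ its integral curves must not escape to $V$ (where $|f|\to 0$) in finite time. Neither $d$-regularity nor the uniform conical structure away from $V$ gives any control there --- the problem sits exactly at $V$. In the holomorphic case this completeness is the content of Proposition~\ref{lem:vec.fld.sph}, and it rests on the estimates $\abs{\arg\lambda}<\pi/4$ of Lemmas~\ref{lem:curve.strat} and \ref{prop:arg.lambda}, which use holomorphicity in an essential way; no analogue is available under the hypotheses of Theorem~\ref{theorem.real}, and indeed for general real analytic germs such a rotating flow need not exist with the obvious projection $f/|f|$.

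The paper avoids this by deducing \ref{it:Strong.Milnor}) from \ref{it:Milnor.Le}) rather than proving it directly: on the solid tube $SN(\e,\delta)=\B_\e\cap f^{-1}(\D_\delta\setminus\{0\})$ the Thom $a_f$ property gives the bundle $f\colon SN(\e,\delta)\to\D_\delta\setminus\{0\}$, and composing with $t\mapsto t/|t|$ and $\s^1\to\RP^1$ exhibits $\Psi$ (and $\Phi$) as bundles over $\RP^1$ (resp.\ $\s^1$) on the solid tube, with fibres the pieces of the pencil; the vector field of Proposition~\ref{lemma}, being tangent to the $X_\t$'s (hence preserving $f/|f|$) and transverse to spheres and tubes, then transports this structure to all of $\B_\e\setminus V$ and down to $\s_\e\setminus K_\e$, giving \ref{it:Strong.Milnor}) and \ref{it:equivalence}) simultaneously. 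Note also that your closing remark --- that transversality of the $X_\t$ with $\s_\e$ ``ensures'' the restricted bundle $\phi$ on $\s_\e\setminus K_\e$ --- is not automatic either, since the restricted fibres are non-compact near $K_\e$; this too is handled by the same transport argument (together with the standard control on $\s_\e\cap f^{-1}(\D_\eta\setminus\{0\})$ coming from $a_f$). So the fix is to reorder your proof: construct the flow of your part \ref{it:equivalence}) first, and obtain \ref{it:Strong.Milnor}) from \ref{it:Milnor.Le}) by transport, instead of trivialising $\Psi$ directly on $\B_\e\setminus V$.
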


We remark that in the holomorphic case, critical values must be
isolated. This is not the case in general  for real analytic maps into $\R^2$
(cf. for instance \cite{Pichon-Seade}). We remark also that if in
Theorem~\ref{theorem.real} we remove the hypothesis that the image of $f$ is a neighbourhood of $0\in\R^2$,
then the results are true restricting the fibrations to their images.
This is carefully discussed in \cite{CSS:MR} where an example is given in \cite[Remark~4.1]{CSS:MR}.

The study of Milnor-type fibrations for real analytic
mappings is a subject   that goes   back to
Milnor's   work in \cite {Milnor:ISH, Mi2}, and there are several recent papers on the
topic.  We refer to
  \cite {Se2, Se3} for overviews on this, and to \cite{Cisneros, Massey, Oka,
  Pichon-Seade,  RS, Raimundo1, Raimundo-Tibar} for more recent work. Notice also that
  part of the content of Section~\ref{section-Real-case} generalises   to real analytic map-germs
  $(X,\0) \to (\R^k,0)$ with $k \ge 2$ and X singular.

 %%%%%%%%%%%

\vskip.3cm

The authors are grateful to professors Bernard Teissier, L\^e
D\~ung Tr\'ang and David Trotman  for helpful conversations while
preparing this article, and to  Mihai Tib\v ar for pointing out a
couple of gaps in the original manuscript, and helping us to fill
up one of these gaps. The authors are also grateful to the referee for several important comments
The hospitality and support of the Abdus
Salam ICTP (from Trieste, Italy) are acknowledged with thanks by
the three authors. The first author also thanks Luis Hern\'andez
Hern\'andez (from the University of Sheffield, U. K.) for providing
him the right atmosphere while writing part of this article.
%%%%%%%%%%%%%%%%%

\section{Preliminaries}

This section contains well-know material about stratified analytic spaces.

\subsection{Stratifications and Whitney conditions}
Let $\F$ be either the real or the complex numbers. A
\emph{stratification} of a subset $X$ of  $\F^n$ is a
{\it locally finite} partition
$\{S_\alpha\}$ of $X$ into smooth, connected submanifolds of
$\F^n$ (called \emph{strata}) which satisfy the {\it
Frontier Condition}, that if
  $S_\alpha$ and $S_\beta$ are strata with
$S_\alpha\cap \bar{S}_\beta\neq\emptyset$, then $S_\alpha\subset \bar{S}_\beta$.

When $X$ is complex analytic, we say that a stratification
$\{S_\alpha\}$ is {\it complex analytic} if all the strata are
analytic. In the real analytic case, we say that $\{S_\alpha\}$ is
{\it subanalytic} if each $S_\a$, its closure $\bar S_\a$ and
$\bar S_\a \setminus S_\a$ are subanalytic.

\medskip

Now
consider a triple $(y,S_\alpha,S_\beta)$, where $S_\alpha$ and
$S_\beta$ are strata of $X$  with $y\in
S_\alpha\subset\bar{S}_\beta$. We say that the
 triple $(y,S_\alpha,S_\beta)$  is \emph{Whitney regular} if it
 satisfies the \emph{Whitney $(b)$ condition}:

\begin{enumerate}[i)]
\item given a sequence  $\{x_n\}\subset S_\beta$ converging in
$\F^n$ to $y\in S_\alpha$ such that the sequence of tangent spaces
$T_{x_n}S_\beta$ converges to a subspace $T\subset \F^n$ ; and
\label{i:planes}

\item a sequence $\{y_n\}\subset S_\alpha$  converging to $y\in
S_\alpha$ such that the sequence of lines (secants) $l_{x_iy_i}$
from $x_i$ to $y_i$ converges to a line $l$;\label{ii:lines}
\end{enumerate}
then one has $l\subset T$.

\medskip

By convergence of
tangent spaces or secants we mean convergence of the translates to the origin
of these spaces, so these are points in the corresponding
Grassmannian.

\medskip

There is also a \emph{Whitney $(a)$ condition}: in the above
situation \ref{i:planes}) one has that $T$ contains the space tangent to
$S_\alpha$ at $y$. It is an exercise to show that condition $(b)$
implies condition $(a)$.

\begin{definition} The stratification $\{S_\alpha\}$ of $X$ is
\emph{Whitney regular} (also called a \emph{Whitney
stratification}) if every triple $(y,S_\alpha,S_\beta)$ as above,
is Whitney regular.
\end{definition}

The existence of Whitney stratifications for every analytic space
$X$ was proved by Whitney in \cite[Thm.~19.2]{Whitney:TAV} for
complex varieties, and  by Hironaka \cite{Hironaka:SubAnal} in the general
setting.

\subsection{Whitney-strong stratifications}
\label{sec:w.s}

We now describe another regularity condition, defined by Verdier
in \cite{Ver}, improving Kuo's  regularity condition in
\cite{Kuo:RTAWS}. For  this let $A$ and $B$ be vector subspaces of
$\F^n$, and let $\pi_B$ be the orthogonal projection onto $B$. We
define the \emph{distance} (or \emph{angle}) between $A$ and $B$
by
\begin{equation}\label{eq:angle.def}
\delta(A,B)=\sup_{\substack{a\in A,\\\norm{a}=1}} \dist(a,B)
=\sup_{\substack{a\in A,\\\norm{a}=1}}\norm{a-\pi_B(a)}.\\[5pt]
\end{equation}

Notice that this is not symmetric in $A$ and $B$. Also,
$\delta(A,B)=0$ if and only if $A\subseteq B$, and
$\delta(A,B)=1$ if and only if there exists $a\in A$ such that
$a\perp B$. If $B$ is a subspace of $C$, then $\delta(A,C)\leq\delta(A,B)$.

The {\it Kuo-Verdier $(w)$ condition (also known as Whitney-strong
condition)} for a triple $(y,S_\alpha,S_\beta)$  as above is that
there exists a neighbourhood $\mathcal{U}_y$ of $y\in S_\alpha$ in
$\F^n$ and a constant $C>0$ such that
\begin{equation*}
\delta(T_{y'}S_\alpha,T_xS_\beta)\leq C \,\norm{y'-x}
\end{equation*}
for all $y'\in \mathcal{U}_y\cap S_\alpha$ and all $x\in \mathcal{U}_y\cap S_\beta$.

\begin{remark}\label{rem:W=WS}
Condition $(w)$ reinforces condition $(a)$, and for analytic
stratifications condition $(w)$ implies condition $(b)$
(\cite[Thm.~(1.5)]{Ver} or \cite{Kuo:RTAWS}). Moreover, for
complex analytic
 stratifications, Teissier  proved in \cite{Teissier:VarPolII} that
conditions $(b)$ and $(w)$ are actually equivalent.
\end{remark}

\begin{definition}
The stratification $\{S_\alpha\}$ is said to be \emph{Whitney-strong} if
every triple $(y,S_\alpha,S_\beta)$ satisfies condition
$(w)$.
\end{definition}

The existence of Whitney-strong stratifications for all analytic
spaces was proved by Verdier  in \cite[Thm.~(2.2)]{Ver}, using
Hironaka's theorem of resolution of singularities. There are
simpler proofs  in
\cite{Lojasiewicz-Stasica-Wachta:SSACV,Denkowska-Wachta:CSSACw}.

The following theorem summarises results from \cite{Whitney:TAV}
and \cite {Ver}.

\begin{theorem}\label{rmk:strat}
Let $(X,\0)$ be a (real or complex) analytic germ in $\F^n$, and
let $V$ be an analytic variety  in X. Then we can endow $X$ with a
locally finite Whitney-strong analytic stratification such that
$V$ is union of strata, $X\setminus (V\cup \sing(X)$ is a stratum
(possibly disconnected in the real case). We can further assume
$\0$ is a stratum and there exists a sufficiently small ball $\B$
around $\0 \in \F^n$, such that each stratum contains $\0$ in its
closure and is transverse to all the spheres in $\B$ centred at
$\0$.
\end{theorem}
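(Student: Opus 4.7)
The plan is to assemble the statement from three classical ingredients already cited in the excerpt: Verdier's existence theorem for Whitney-strong stratifications, a refinement step to ensure compatibility with $V$, $\sing(X)$ and the point $\0$, and the Thom--Mather conic-structure argument to obtain transversality to small spheres.

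First I would apply Verdier's theorem \cite[Thm.~(2.2)]{Ver} to the pair $(X, V \cup \sing(X))$, yielding a locally finite analytic Whitney-strong stratification of $X$ in which both $V$ and $\sing(X)$ are unions of strata. It follows that the open set $X \setminus (V \cup \sing(X))$ is a union of strata; since it lies in the smooth locus of $X$, one may take its connected components as strata. In the complex analytic case, after restricting to a small enough ball, this is a single stratum, while in the real analytic case there can be finitely many components, which is the content of the parenthetical remark. A further refinement, declaring $\{\0\}$ to be a $0$-dimensional stratum and removing $\0$ from any positive-dimensional stratum containing it, preserves the Whitney-strong property trivially, since at a point stratum the $(w)$ inequality reads $\delta(\{0\}, T_x S_\beta) = 0 \le C\norm{\0-x}$.

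Next, local finiteness implies that for every stratum $S_\alpha$, either $\0 \in \bar S_\alpha$ or $\bar S_\alpha$ is disjoint from some neighbourhood of $\0$. Shrinking the ambient neighbourhood discards the latter strata, leaving us with a stratification in which every stratum has $\0$ in its closure. For the transversality to spheres, I would apply the standard conic-structure argument: the function $\rho(x) = \norm{x}^2$ is real analytic, and on each positive-dimensional stratum $S_\alpha$ its restriction $\rho\vert_{S_\alpha}$ has critical values forming a subanalytic subset of $\R$; by the Curve Selection Lemma together with Lojasiewicz's inequality, $0$ is an isolated critical value of $\rho\vert_{S_\alpha}$. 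Since only finitely many strata meet a fixed compact neighbourhood of $\0$, one may choose $\e>0$ so that $\rho\vert_{S_\alpha}$ has no critical value in $(0,\e^2]$ for any of them; the ball $\B = \B_\e$ then meets each stratum transversally in every sphere $\s_{\e'}$ with $\e' \le \e$.

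The main difficulty is not technical but bookkeeping: one must check that the successive refinements (compatibility with $V$ and $\sing(X)$, declaring $\{\0\}$ a stratum, splitting the smooth part into connected components, discarding strata whose closure misses $\0$) preserve both analyticity of the strata and the Whitney-strong condition. Each such refinement either slices a stratum along another stratum of the original stratification or discards strata entirely, and in both cases the $(w)$ condition for the new triples is inherited from triples of the original stratification, so nothing is lost at any step.
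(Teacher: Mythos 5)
Your assembly is correct and is essentially the paper's own approach: the paper gives no proof of this theorem, merely citing Whitney and Verdier for exactly the ingredients you use (Verdier's existence theorem for Whitney-strong stratifications, compatibility refinements, and the standard curve-selection argument giving transversality of each stratum to all small spheres). One small adjustment: to guarantee that $V$ itself, and not merely $V\cup\sing(X)$, is a union of strata, apply Verdier's theorem to the locally finite family $\{V,\sing(X)\}$ rather than to the single set $V\cup\sing(X)$; with that change your refinement steps (making $\{\0\}$ a stratum, coarsening the smooth part $X\setminus(V\cup\sing(X))$ into its connected components, discarding strata whose closures miss $\0$) indeed preserve condition $(w)$, the coarsening step because $T_xS_\beta\subset T_xM$ gives $\delta(T_{y'}S_\alpha,T_xM)\leq\delta(T_{y'}S_\alpha,T_xS_\beta)$.
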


This yields (compare with   \cite[Thm.~10.2]{Mi2} and \cite[Lemma~3.2]{BV}):

\begin{theorem}\label{conical structure}
 For  $\B$ as above, one has that the triple $(\B,X\cap \B, V \cap \B)$ is homeomorphic to
the cone over the triple $(\partial \B, X\cap \partial \B, V \cap\partial \B)$.
\end{theorem}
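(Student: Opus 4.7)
The plan is to build a \emph{rugose} stratified vector field $v$ on $(X\cap\B)\setminus\{\0\}$ that is tangent to every stratum, preserves $V$ (because $V$ is a union of strata), and is transverse to every sphere centred at $\0$; then integrate $v$ to identify $X\cap\B$ with the cone on $X\cap\partial\B$ in a way that carries $V\cap\B$ to the cone on $V\cap\partial\B$. This is the standard Thom--Mather/Verdier first isotopy argument, now in its Kuo--Verdier form since Theorem~\ref{rmk:strat} provides a Whitney-strong stratification.

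Concretely, I first fix $\B=\B_\e$ small enough that Theorem~\ref{rmk:strat} gives a Whitney-strong stratification $\{S_\a\}$ of $X$ in which $\{\0\}$ is a stratum, $V$ is a union of strata, and every stratum is transverse, inside $\B$, to every sphere around $\0$. Setting $\rho(x)=\norm{x}^{2}$, transversality says exactly that $\rho|_{S_\a}$ has no critical point in $S_\a\cap(\B\setminus\{\0\})$. On each positive-dimensional stratum $S_\a$ I can therefore take the gradient of $\rho|_{S_\a}$, rescaled to a smooth vector field $v_\a$ tangent to $S_\a$ with $d\rho(v_\a)\equiv 1$.

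Next, I assemble the $v_\a$ into a single continuous vector field $v$ on $(X\cap\B)\setminus\{\0\}$ by a partition of unity adapted to the stratification. The key point is that the Kuo--Verdier $(w)$ condition in Theorem~\ref{rmk:strat} controls the angles $\delta(T_{y'}S_\a,T_xS_\beta)$ linearly in $\norm{y'-x}$; this is precisely what Verdier proved \cite{Ver} is enough to assemble local stratified vector fields into one whose flow exists, is continuous on $(X\cap\B)\setminus\{\0\}$, preserves each stratum, and still satisfies $d\rho(v)\equiv 1$. Integrating $v$ backwards in $\rho$, each trajectory through $y\in X\cap\partial\B$ stays in the stratum of $y$, meets each sphere of radius $r\in(0,\e]$ in exactly one point $\psi(r,y)$, and converges to $\0$ as $r\to 0$ (since $\{\0\}$ is itself a stratum and $\rho\to 0$ on the trajectory). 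The map $(r,y)\mapsto\psi(r,y)$ defines a homeomorphism $(0,\e]\times(X\cap\partial\B)\to (X\cap\B)\setminus\{\0\}$ which extends continuously by collapsing $\{0\}\times(X\cap\partial\B)$ to $\0$, producing the desired homeomorphism $\hbox{Cone}(X\cap\partial\B)\cong X\cap\B$. Since the flow preserves the stratification, this homeomorphism carries $\hbox{Cone}(V\cap\partial\B)$ onto $V\cap\B$ and $\hbox{Cone}(\partial\B)$ onto $\B$, giving the triple version.

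The main obstacle is the rugosity step: producing a single stratified vector field whose flow is globally continuous on the union of all strata. For a Whitney $(b)$ stratification this would demand the full machinery of control data, but under the $(w)$ condition Verdier's construction delivers a rugose vector field more or less directly, and this is exactly why Theorem~\ref{rmk:strat} is stated in the Whitney-strong form. Once rugosity and transversality to spheres are in hand, the coning argument above is routine.
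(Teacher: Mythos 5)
Your overall strategy --- a rugose stratified vector field tangent to the strata and transverse to the spheres, integrated to produce the cone parametrisation --- is exactly the intended mechanism: the paper deduces Theorem~\ref{conical structure} directly from Theorem~\ref{rmk:strat} (pointing to \cite[Thm.~10.2]{Mi2} and \cite[Lemma~3.2]{BV}), and its own proof of the uniform conical structure (statement iii of Theorem~\ref{Thm:Can.Dec}) is precisely this integration argument. Two points in your write-up need repair, however. The more serious one concerns the ambient ball: you construct the vector field only on $(X\cap\B)\setminus\{\0\}$, yet the statement is about the \emph{triple} $(\B,X\cap\B,V\cap\B)$. Your final claim that the homeomorphism ``carries $\hbox{Cone}(\partial\B)$ onto $\B$'' does not follow, since nothing has been defined on $\B\setminus X$. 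The fix is routine but must be said: take a Whitney-strong stratification of a neighbourhood of $\0$ in $\C^n$ adapted to both $X$ and $V$, whose top stratum is the complement of $X$; that open stratum is automatically transverse to every sphere, so the same construction produces a rugose stratified field on all of $\B\setminus\{\0\}$, and its flow gives the homeomorphism of triples, carrying $\hbox{Cone}(X\cap\partial\B)$ to $X\cap\B$ and $\hbox{Cone}(V\cap\partial\B)$ to $V\cap\B$.

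The second point is the gluing step. The stratum-wise gradients $v_\a$ live only on their own strata, so ``assembling the $v_\a$ by a partition of unity'' does not by itself yield a stratified field with continuous flow: the field whose restriction to each stratum is its own normalised gradient is in general not rugose, and rugosity is exactly the issue you flag. What the $(w)$ condition actually buys is Verdier's lifting statement, reproduced in the paper as Proposition~\ref{rugose lifting}: since transversality to the spheres makes $r(x)=\norm{x}^2$ a stratified submersion on $\B\setminus\{\0\}$ for the ambient stratification above, one lifts $\partial/\partial t$ through $r$ to a rugose stratified field $v$ with $dr(v)\equiv 1$, built from rugose horizontal subbundles and a rugose partition of unity as in \cite[Prop.~(4.6)]{Ver} (alternatively, the radial rugose fields of \cite{Sch}); rugosity then gives integrability and continuity of the flow \cite[Prop.~(4.8)]{Ver}. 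With these substitutions the remainder of your argument --- strata preserved, trajectories parametrised by $r$, uniform convergence to $\0$, extension over the cone point --- is correct and coincides with the argument the paper carries out in Section~2 for the refined statement.
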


\begin{definition}
 A Whitney-strong stratification of $X$ as in Theorem~\ref{rmk:strat} will be called a
 \emph{Whitney stratification adapted to $V$}.
\end{definition}

\subsection{The Thom Property}
We now look at regularity conditions for the space $X$ relative to a
function on it. This originates in the work of R. Thom \cite{Thom:EMS}.

Let $X\subset U\subset\C^n$ be a complex analytic  subspace of an open set
 $U$ of $\C^n$, and let $f\colon (X,\0) \to (\C,0)$ be a holomorphic
function. Let
$\{S_\alpha\}_{\alpha\in A}$ be an analytic Whitney stratification of $X$
 and let $\{T_\gamma\}_{\gamma\in G}$ be an analytic Whitney stratification
of $\C$. We say that the stratifications $\{S_\alpha\}_{\alpha\in A}$ and
$\{T_\gamma\}_{\gamma\in G}$ give a \emph{Whitney stratification of $f$} if for
every $\alpha\in A$ there exists $\gamma\in G$ such that $f$ induces a
submersion from $S_\alpha$ to $T_\gamma$.

Consider the triple $(y,S_\alpha,S_\beta)$, with $y\in
S_\alpha\subset\bar{S}_\beta$. Set $f^\alpha_y
=f|_{S_\alpha}^{-1}(f(y))$, the fibre of $f|_{S_\alpha}$ which
contains $y$.

 We say that a triple $(y,S_\alpha,S_\beta)$ satisfies the
{\it Thom $(a_f)$ condition} if for every sequence  $\{x_n\}\subset
S_\beta$ converging in $\C^n$ to $y\in S_\alpha$,   one has (when
the limit exists):
\begin{equation*}
\lim_{n\to+\infty}\delta(T_yf^\alpha_y,T_{x_n}f^\beta_{x_n})=0.
\end{equation*}
The triple satisfies the {\it Strict Thom  $(w_f)$ condition} if
there exists a neighbourhood $\mathcal{V}_{y}$ of $y\in S_\alpha$
in $\C^n$ and a constant $D>0$ such that for all $y'\in
\mathcal{V}_{y}\cap S_\alpha$ and $x\in \mathcal{V}_{y}\cap
S_\beta$,
\begin{equation*}
\delta(T_{y'}f^\alpha_{y'},T_xf^\beta_x)\leq D \norm{y'-x}\,.
\end{equation*}

\begin{definition}
We say that the stratification satisfies \emph{Thom's} $(a_f)$
\emph{condition} (respectively $(w_f)$ \emph{condition}) if every
triple $(y,S_\alpha,S_\beta)$ satisfies
 condition $(a_f)$ (respectively $(w_f)$).
\end{definition}

\begin{definition} We say that the map $f$ on $X$ has the \emph{Thom property}
(respectively the \emph{strict Thom property}) if there is a
Whitney stratification of $f$ that  satisfies \emph{Thom's}
$(a_f)$ \emph{condition} (respectively the $(w_f)$ {condition}).
\end{definition}

Thom property for complex analytic maps was proved by Hironaka in
\cite[\S5 Cor.~1]{Hironaka:SF}. The $(w_f)$ property was proved in
\cite{Par} for the case $X$ smooth (and complex) and in
\cite[Thm.~4.3.2]{Brianson-Maisonobe-Merle:LSDSWCT} for the
general complex analytic case. The corresponding statement is false in general for real analytic maps.

\begin{remark}\label{rmk:all.reg.strat}
Let $X\subset U\subset\C^n$ be a complex analytic subset of an
open set $U$ of $\C^n$. Let $f\colon (X,\0) \to (\C,0)$ be
holomorphic. Let $\{S_\alpha\}$
 and $\{T_\gamma\}$ be complex analytic Whitney stratifications of $X$ and
$\C$ respectively, which define a stratification of $f$. By
\cite[Rmk.~4.1.2, Thm.~4.2.1,
Thm.~4.3.2]{Brianson-Maisonobe-Merle:LSDSWCT} one has that this
stratification of $f$ satisfies conditions $(a_f)$ and $(w_f)$.
\end{remark}

\subsection{Stratified rugose vector fields}

Let $X \subset \F^n$ be equipped  with a stratification
$\{S_\alpha\}$ and $f\colon X\to \R$ an analytic function. The
function $f$ is a \emph{rugose function} if for every stratum
$S_\alpha$ the restriction $f|_{S_\alpha}$ is of class $C^\infty$
and if for every $y\in S_\alpha$ there exists a neighbourhood $V$
of $y$ and a constant $C\in\R^*_+$, such that, for every $y'\in
V\cap S_\a$ and every $x\in V\cap X$ we have
\begin{equation*}
 \abs{f(y')-f(x)}\leq C\norm{y'-x}.
\end{equation*}
Notice that a rugose function is continuous. A vector valued map is \emph{rugose}
if every coordinate function is rugose.
A vector bundle $F$ on $X$ is \emph{rugose} if the changes of charts are
rugose.

\begin{definition}
 A \emph{rugose vector bundle $F$ on $X$ tangent to the stratification $\{S_\alpha\}$}
is a vector bundle on $X$ such that, for every stratum $S_\alpha$ there is an injection
$i_\alpha\colon F|_{S_\alpha}\hookrightarrow TS_\alpha$ and if $i\colon X\to\F^n$ is
the inclusion, the vector bundle morphism $F\to i^*T\F^n|_X$ induced by $i$ and the
$i_\alpha$ is rugose.
\end{definition}

A \emph{stratified vector field} $v$ on $X$ is a
section of the tangent bundle $T \F^n|_X$, such that at each  $x
\in X$, the vector $v(x)$ is tangent to the stratum that contains $x$.

A stratified vector field $v$ is called \emph{rugose} near $y\in
S_\a$, where $S_\a$ is a stratum of $X$, when there exists a
neighbourhood $\mathcal{W}_{y}$ of $y$ in $\F^n$ and a constant
$K>0$, such that
\begin{equation}\label{eq:rugose}
\norm{v(y')-v(x)}\leq K\norm{y'-x},
\end{equation}
for every $y'\in \mathcal{W}_{y}\cap S_\a$ and every
$x\in\mathcal{W}_{y}\cap S_\beta$, with $S_\a\subset\bar{S}_\beta$.

Rugose vector fields play a key-role in Verdier's proof of the
Thom-Mather isotopy theorems (see \cite[\S4]{Ver}).

\medskip
The following result of Verdier is important for this article. We
include a slight modification of the proof given in
\cite[Prop.~(4.6)]{Ver} which is more suitable for our purposes.

\begin{proposition}[{\cite[Prop.~(4.6)]{Ver}}]\label{rugose lifting}
Let $X$ be a real analytic space and $A$ a locally closed
subset of $X$ which is union of strata for some Whitney-strong stratification
$\{S_\a\}$. Let $Y$ be a non-singular real analytic space, $g\colon X \to Y$ a
real analytic map whose restriction to each stratum is a submersion,
 and $\eta$ a $C^\infty$ vector field on $Y$. Then  there exists a rugose stratified
 vector field $\xi$ on $A$ that lifts $\eta$, {\it i.e.}, for each $x \in A$
 one has $dg(\xi(x)) = \eta(g(x))$.
\end{proposition}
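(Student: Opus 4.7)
The plan is to construct $\xi$ locally on a small neighborhood of each point and then glue using an ambient $C^\infty$ partition of unity on $\F^n$, which restricts to a rugose partition on $X$. Two key observations make the gluing step automatic: being a lift of $\eta$ is an affine condition, so convex combinations of lifts are lifts; and finite sums of rugose vector fields are rugose. Since rugosity is local and the stratification is locally finite, it suffices to work in a small neighborhood $U\subset\F^n$ of an arbitrary point $y_0\in S_\alpha\cap A$, shrunk so that $U$ meets only $S_\alpha$ and finitely many higher strata $S_\beta$ with $S_\alpha\subset\bar{S}_\beta$.

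For the local construction, I would first build the lift on $S_\alpha$ itself. Because $g|_{S_\alpha}$ is a real analytic submersion, standard linear algebra produces a $C^\infty$ lift $\xi_\alpha$ of $\eta$ on $S_\alpha\cap U$; extend $\xi_\alpha$ arbitrarily to an ambient $C^\infty$ vector field $\tilde\xi$ on $U$. For each higher stratum $S_\beta$ meeting $U$, the smooth affine sub-bundle
\begin{equation*}
E_x\;=\;\{\,v\in T_xS_\beta : dg(v)=\eta(g(x))\,\},\qquad x\in S_\beta\cap U,
\end{equation*}
is non-empty because $g|_{S_\beta}$ is a submersion. Define $\xi_\beta(x)$ to be the orthogonal projection of $\tilde\xi(x)$ onto $E_x$. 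Then $\xi_\beta$ is a $C^\infty$ lift of $\eta$ on $S_\beta\cap U$ tangent to $S_\beta$, and assembling the $\xi_\alpha$ together with the $\xi_\beta$ gives a stratified lift of $\eta$ on $A\cap U$.

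The crux is the rugose estimate between $\xi_\beta$ and $\xi_\alpha$, which is where the Whitney-strong hypothesis enters. For $y\in S_\alpha\cap U$ and $x\in S_\beta\cap U$ close to $y$, condition $(w)$ yields $\delta(T_yS_\alpha,T_xS_\beta)\leq C\norm{x-y}$, so there exists $v'\in T_xS_\beta$ with $\norm{v'-\xi_\alpha(y)}\leq C\norm{\xi_\alpha(y)}\norm{x-y}$. Smoothness of $g$ and $\eta$ then forces $\norm{dg(v')-\eta(g(x))}=O(\norm{x-y})$, and since $g|_{S_\beta}$ is a submersion admitting a locally bounded right inverse, one can correct $v'$ by a vector in $\ker(dg|_{T_xS_\beta})$ of norm $O(\norm{x-y})$ to land in $E_x$. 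Hence $E_x$ meets the ball of radius $O(\norm{x-y})$ around $\xi_\alpha(y)$; since $\xi_\beta(x)$ is the point of $E_x$ closest to $\tilde\xi(x)$ and $\tilde\xi$ is $C^\infty$ with $\tilde\xi(y_0)=\xi_\alpha(y_0)$, one concludes $\norm{\xi_\beta(x)-\xi_\alpha(y)}=O(\norm{x-y})$, which is the desired rugose estimate. The main difficulty is exactly this control of the orthogonal projection onto the moving affine fibers $E_x$: the $(w)$ condition is tailored to deliver the Lipschitz bound on tangent spaces, but one must combine it with a Lipschitz right inverse for $dg|_{T_xS_\beta}$ and with the smooth variation of the fibers $E_x$, and verify that all three contributions telescope into a single $O(\norm{x-y})$ bound.
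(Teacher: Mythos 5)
Your two--level estimate (bottom stratum $S_\alpha$ against an adjacent higher stratum $S_\beta$) is essentially sound, but it is not where the difficulty of the proposition lies, and as written your construction has a genuine gap at the \emph{intermediate} strata. A small neighbourhood $U$ of $y_0\in S_\alpha$ will in general contain chains $S_\alpha\subset\bar S_\gamma$, $S_\gamma\subset\bar S_\beta$, and rugosity is a condition at every point of every stratum of $A\cap U$, in particular at points $z\in S_\gamma$, where it demands $\norm{\xi(y')-\xi(x)}\leq K\norm{y'-x}$ for $y'\in S_\gamma$, $x\in S_\beta$ near $z$. In your recipe both $\xi_\gamma$ and $\xi_\beta$ are orthogonal projections of the \emph{same} ambient field $\tilde\xi$, which extends $\xi_\alpha$ but bears no relation to $\xi_\gamma$; since the limit of the affine fibres $E^\beta_x$ as $x\to z$ strictly contains $E^\gamma_z$, the two projections of $\tilde\xi(z)$ differ by a definite amount in general, so the field is not even continuous across $S_\gamma$, let alone rugose. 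Concretely: $X=\R^3$, $Y=\R$, $g(x_1,x_2,x_3)=x_1$, $\eta=\partial/\partial t$, strata the $x_1$-axis, the two half--planes $\{x_3=0,\ \pm x_2>0\}$ and the two open half--spaces $\{\pm x_3>0\}$ (a Whitney--strong stratification, $g$ a submersion on each stratum). The perfectly legitimate extension $\tilde\xi(x)=(1,0,\mu x_2)$ of $\xi_\alpha=(1,0,0)$ gives $\xi_\gamma\equiv(1,0,0)$ on $\{x_3=0,\,x_2>0\}$ but $\xi_\beta(0,x_2,t)=(1,0,\mu x_2)$ for $t\neq 0$, so at $z=(0,c,0)$ the required estimate fails for every constant. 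Your gluing step cannot repair this: a convex combination of a field that is rugose near $z$ with one that is not is again not rugose, and you cannot push the support of the bad local piece off $S_\gamma$, because $S_\gamma$ accumulates on $y_0$. (Two smaller points: the "locally bounded right inverse" of $dg|_{T_xS_\beta}$ near $S_\alpha$ is true but needs an argument via condition $(a)$; and it is boundedness, not Lipschitz dependence, that your correction step uses.)

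What is missing is a mechanism forcing the stratum--by--stratum choices to be compatible along \emph{every} incidence $S_\gamma\subset\bar S_\beta$, not only along $S_\alpha\subset\bar S_\beta$; choosing the ambient extension "tangent to all strata and lifting $\eta$ on each" is essentially the statement to be proved. This can be done by an induction on the depth of the stratification with controlled rugose extensions at each stage, but the efficient route --- the one the paper takes, following Verdier --- is to invoke \cite[Cor.~(4.5)]{Ver}: near $y_0$ there is a single rugose vector bundle $F$ on $A\cap U$ tangent to the stratification, inducing $TS_{\alpha}$ on the stratum of $y_0$. Then $Tg\colon F\to g^*TY$ is a rugose epimorphism near $y_0$; splitting $F=\ker Tg\oplus H$ by a rugose horizontal complement (e.g.\ the orthogonal complement for a rugose metric) and pulling back $g^*\eta$ through the rugose isomorphism $H\cong g^*TY$ gives the lift, with the cross--strata compatibility already packaged in $F$. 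Your projection argument is, in effect, a proof of the special two--level case of what that corollary provides; without it (or an explicit induction over strata) the proof is incomplete.
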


\begin{proof}
 Using a rugose partition of unity the problem is local on $A$. Let $x\in A$,
by \cite[Cor.~(4.5)]{Ver} there exists a neighbourhood $V$ of $x$
and a rugose vector bundle $F$ on $V\cap A$ tangent to the stratification $\{S_\alpha\cap V\}$,
which induces on the stratum $S_x\cap V$ containing $x$ the tangent bundle $TS_x$.
The map $f$ induces a rugose vector bundle morphism $Tf\colon F\to f^*TY$.
Since $f$ is a submersion on each stratum, $Tf$ is surjective in $x$, and therefore surjective
on a neighbourhood of $x$. Let $K$ be the kernel of $F$ which is the \emph{vertical} subbundle
of $F$. Let $H$ be a rugose \emph{horizontal} subbundle of $F$, that is, such that $F=K\oplus H$.
One way to construct such $H$ is to endow $F$ with a rugose metric and take $H=K^\perp$.
Hence $Tf$ induces a rugose isomorphism from $H$ to $f^*TY$. The inverse image of $f^*\eta$
by this isomorphism is a desired rugose stratified vector field.
\end{proof}

\begin{remark}\label{rmk:horizontal}
 Notice that in the proof of Proposition~\ref{rugose lifting}, different choices of
horizontal subbundles $H$ of $F$ give rise to different liftings of $\eta$.
\end{remark}

%%%%%%%%%%%%%%%%%%%%%%

\section{The uniform conical structure}\label{section-Conical-structure}

Let $\inpr{\,\cdot\,}{\,\cdot\,}$ be the Hermitian inner product
on $\C^n$. We consider $\C^n$ as a $2n$-dimensional Euclidean real
vector space defining the Euclidean inner product as the real part
$\Re\inpr{\,\cdot\,}{\,\cdot\,}$. Throughout this article,  $X$ is
a complex analytic subset of an open set $U$ around the origin
$\0$ of $\C^n$. Let $f\colon (X,\0) \to (\C,0)$ be a non-constant
holomorphic map and we equip $X$ with a Whitney stratification
adapted to $V=f^{-1}(0)$.
 We let $\B$ be a ball centred at $\0$ of sufficiently small
 radius, so that $f$ has no critical points in $\B \setminus V$,
 each stratum in $X \cap \B$ has $\0$ in its
 closure and meets transversally every sphere in $\B$ centred at $\0$.

Recall that a point $x\in X\cap\B$ is a \emph{critical point of
$f$}, in the stratified sense, if $x$ is a critical point of $f$
restricted to the stratum which contains $x$  (see for instance
\cite[Part~1~\S2.1]{Goresky-MacPherson:SMT} for more on this subject).
That is, if $ \tilde f$ is an extension of $f$ to $U$, then the
kernel of $d \tilde f(x)$ contains the space tangent to the
stratum. One has the analogous definition in the real analytic
category (see \cite{Goresky-MacPherson:SMT}).

\subsection{The canonical pencil of a holomorphic map}
Given $f$, we associate to it a 1-parameter family of real valued
functions as follows.  For each $\t \in [0,\pi) \,$, consider the
 real line
$\mathcal L_{\theta} \subset \C$ passing through the origin with an angle $\t$
 with respect to the real axis, measured in the usual way.
Let $\mathcal L_{\theta}^\perp$ be the line orthogonal to
$\mathcal L_{\theta}$ and let
$\pi_{\t} \colon \C \to \cL_{\t}^\perp$ be the orthogonal projection.
 Set $h_{\t} = \pi_{\t} \circ f \,$, so that  $h_0$ and
$h_{\frac{\pi}{2}}$ are, respectively, the imaginary  and real parts of $f$.
Hence
$\{h_\t\}$ is a 1-parameter family of real analytic functions and if we set
$X_{\t} = h^{-1}_{\t}(0)$, then each $X_\t$ is a real hypersurface.

The first two lemmas below  are exercises and we leave the proofs
to the reader. They prove part of statement~{\bf\ref{it:decomposition})} of Theorem~\ref{Thm:Can.Dec}.

\begin{lemma}\label{lem:singXt}
The singular points of $X_\t$ are:
\begin{equation*}
\sing X_\t=\sing V\cup (X_\t\cap \sing X).
\end{equation*}
\end{lemma}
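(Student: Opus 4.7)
The plan is to split $X_\t$ into the part meeting $\sing(X)$ and the part lying in the smooth stratum $X \setminus \sing(X)$, and to analyze singularity of $X_\t$, as a subset of $\C^n$, on each piece.

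First I would record the easy inclusion. Since $\pi_\t(0)=0$, we have $V = f^{-1}(0) \subseteq X_\t$ for every $\t$, hence $\sing(V) \subseteq X_\t$. Moreover, any point of $X_\t$ at which the ambient $X$ is already singular is a fortiori a singular point of $X_\t$. This gives $\sing(V) \cup (X_\t \cap \sing(X)) \subseteq \sing(X_\t)$.

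For the reverse inclusion, pick $x \in X_\t$ with $x \in X \setminus \sing(X)$; I want to show $X_\t$ is singular at $x$ iff $x \in \sing(V)$. Near such $x$, $X$ is a complex submanifold of $\C^n$ and $X_\t$ is cut out in $X$ by the real analytic function $h_\t = \pi_\t \circ f$, so $X_\t$ is singular at $x$ iff
\begin{equation*}
d(h_\t|_X)(x) \;=\; \pi_\t \circ df(x)|_{T_xX} \;:\; T_xX \longrightarrow \cL_\t^\perp
\end{equation*}
vanishes. The key step is a $\C$-linearity dichotomy: $df(x)|_{T_xX}\colon T_xX\to \C$ is $\C$-linear, so its image is a complex subspace of $\C$, hence either $\{0\}$ or all of $\C$. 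In the latter case, composing with the surjective $\R$-linear projection $\pi_\t\colon\C\to\cL_\t^\perp$ produces a nonzero map. Therefore $d(h_\t|_X)(x)=0$ if and only if $df(x)|_{T_xX}=0$, regardless of $\t$.

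To finish, I would identify the condition $df(x)|_{T_xX}=0$, at a smooth point $x$ of $X$, with $x \in \sing(V)$. The standing hypothesis that $f$ has no stratified critical points in $\B\setminus V$ forces any such $x$ to lie in $V$; and at a smooth point of $X$ that lies in $V$, vanishing of $df|_{T_xX}$ is precisely the failure of $V$ to be smooth at $x$ (since locally $V$ is cut out in the complex manifold $X$ by the single holomorphic equation $f=0$). Assembling the two inclusions yields $\sing(X_\t) = \sing(V) \cup (X_\t \cap \sing(X))$. No serious obstacle arises; the whole argument turns on the $\C$-linearity trick that renders the choice of angle $\t$ irrelevant to where $h_\t$ is critical.
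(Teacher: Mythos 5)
The paper itself gives no proof of this lemma (it is explicitly left to the reader as an exercise), so the only issue is whether your argument is complete, and it is not quite. Your central observation --- that $d(h_\t|_X)(x)=\pi_\t\circ df(x)|_{T_xX}$ vanishes if and only if $df(x)|_{T_xX}$ does, because the image of the $\C$-linear map $df(x)|_{T_xX}$ is either $\{0\}$ or all of $\C$ --- is correct and is surely the intended heart of the exercise; it cleanly gives that $X_\t$ is smooth at every smooth point of $X$ where $df|_{T_xX}\neq0$, in particular on $X\setminus V$ by the choice of $\B$. The gaps are in the converse assertions, where you pass from ``the chosen defining equation has vanishing differential'' to ``the set is singular''. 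First, ``vanishing of $df|_{T_xX}$ at a smooth point of $X$ lying in $V$ is precisely the failure of $V$ to be smooth at $x$'' is false as stated: it needs $f$ to be reduced (to generate the ideal of $V$), since $f=u\,g^{k}$ with $k\ge2$ has $df\equiv0$ along a perfectly smooth $V$. For $X=\C^2$, $f(z,w)=z^2$, the set $V=\{z=0\}$ is smooth, yet $X_\t$ is a union of two real hyperplanes crossing along $V$, so with the naive set-theoretic reading of $\sing V$ even the equality claimed in the lemma fails at points of $V$; you must either invoke reducedness (write $f=ug$ with $g$ reduced, so $df(x)=u(x)dg(x)\neq0$ when $V$ is smooth at $x$) or read $\sing V$ as the critical locus of $f$ on $V$, and say which convention is in force. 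Second, and independently, the implication ``$d(h_\t|_X)(x)=0$, hence $X_\t$ is singular at $x$'' is not a valid principle: a real analytic zero set can be a smooth manifold at points where the differential of the chosen equation vanishes (think of $x_1^2$ or $x_1^3$ on $\R^m$). So the inclusion $\sing V\subseteq\sing X_\t$ at smooth points of $X$ still requires an argument that $X_\t$ genuinely fails to be a manifold at a singular point of $V$; for instance for $f=zw$ on $\C^2$ one must actually see that $\{zw\in\cL_\t\}$ is not a manifold at $\0$ (its link is a torus, the double of the Milnor fibre along the link of $f$), and nothing in your proposal does this.

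A smaller gap of the same nature is the ``a fortiori'' step $X_\t\cap\sing X\subseteq\sing X_\t$. A subset of a set that is singular at $x$ can perfectly well be smooth at $x$ (a line through the vertex of a cone), so this containment is not automatic and needs a proof --- for example via the local product structure of $X$ along $X_\t$ away from $V$ that follows from Theorem~\ref{Thm:blow.up}, or a transversality argument within a Whitney stratification adapted to $V$ as in Lemma~\ref{it:St.trans.Sp}. In summary: the $\C$-linearity dichotomy is the right tool and settles all the ``smoothness'' halves of the statement, but each ``singularity'' half is asserted rather than proved, and one of those assertions is false without either a reducedness hypothesis on $f$ or a critical-locus reading of $\sing V$.
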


\begin{lemma}\label{decompositon}
One has $X\cap\B = \cup X_{\t}$ and
\begin{equation*}
 V = \cap X_{\t} =X_{\t_1} \cap X_{\t_2},
\end{equation*}
for each pair $\t_1 \ne \t_2 \pmod\pi$.
\end{lemma}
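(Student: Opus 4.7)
The plan is to unpack the definition of $X_\theta$ in terms of $f$ directly. By construction $x \in X_\theta$ means $h_\theta(x) = \pi_\theta(f(x)) = 0$, and since $\pi_\theta$ is the orthogonal projection onto $\mathcal{L}_\theta^\perp$, we have $\pi_\theta(w) = 0$ if and only if $w \in \mathcal{L}_\theta$. Hence the key reformulation is
\begin{equation*}
X_\theta = f^{-1}(\mathcal{L}_\theta).
\end{equation*}
Once this is recorded, the two claimed identities reduce to elementary facts about the real lines $\mathcal{L}_\theta$ through the origin in $\mathbb{C} \cong \mathbb{R}^2$.

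For the union $X \cap \mathbb{B} = \bigcup_\theta X_\theta$, I would argue pointwise. Given $x \in X \cap \mathbb{B}$, if $f(x) = 0$ then $x \in V \subseteq X_\theta$ for every $\theta$; if $f(x) \neq 0$, then $f(x)$ spans a unique real line through the origin in $\mathbb{C}$, namely $\mathcal{L}_\theta$ for a unique $\theta \in [0,\pi)$, so $x \in X_\theta$. Conversely every $X_\theta$ is by definition contained in $X$ (intersected with $\mathbb{B}$ once we localise), so the opposite inclusion is trivial.

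For the intersection, $V \subseteq \bigcap_\theta X_\theta$ follows from $0 \in \mathcal{L}_\theta$ for every $\theta$. For the reverse inclusion, and in fact the stronger statement that $X_{\theta_1} \cap X_{\theta_2} \subseteq V$ whenever $\theta_1 \not\equiv \theta_2 \pmod{\pi}$, I would use that two distinct real lines through the origin in $\mathbb{R}^2$ meet only at $0$: if $x \in X_{\theta_1} \cap X_{\theta_2}$, then $f(x) \in \mathcal{L}_{\theta_1} \cap \mathcal{L}_{\theta_2} = \{0\}$, so $f(x) = 0$ and $x \in V$. Combined with $V \subseteq X_{\theta_i}$, this gives both $\bigcap_\theta X_\theta = V$ and $X_{\theta_1} \cap X_{\theta_2} = V$ for any such pair.

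There is no real obstacle here; the statement is essentially a tautology once one identifies $X_\theta$ with $f^{-1}(\mathcal{L}_\theta)$ and uses that $\{\mathcal{L}_\theta\}_{\theta \in [0,\pi)}$ is a partition of $\mathbb{C}\setminus\{0\}$ together with the common point $0$. The only mild care needed is the mod $\pi$ convention, which is why $\theta$ is taken in $[0,\pi)$ rather than $[0,2\pi)$: the line $\mathcal{L}_\theta$ is unoriented, so $\theta$ and $\theta + \pi$ determine the same line and hence the same $X_\theta$.
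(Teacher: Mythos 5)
Your argument is correct and is essentially the only natural proof: the paper itself states just before this lemma that ``the first two lemmas below are exercises and we leave the proofs to the reader,'' so there is no written proof to compare against. Your identification $X_\theta = f^{-1}(\mathcal{L}_\theta)$ (which the paper also records in the statement of Theorem~1) and the observation that the lines $\mathcal{L}_\theta$, $\theta \in [0,\pi)$, cover $\mathbb{C}$ and pairwise intersect only at the origin is exactly the intended reasoning, and your remark on the $\bmod\,\pi$ convention is an appropriate touch.
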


\begin{remark}\label{rem:gluing.links}
We notice  that each $X_\t$ is naturally the union of three sets:
the points $x \in X\cap\B$ such that $f(x)=0$, {\it i.e.}, $x\in
V$, and the points $x \in X\cap\B$ such that $f(x)$ is in one of
the two half lines of
 $ \cL_{\t} \setminus \{0\}$. Write this as:
\begin{equation*}
X_{\t} = E_{\t} \cup V \cup E_{\t+\pi} \,.
\end{equation*}
Similarly, if $\s=\partial\B$  one has:
\begin{equation}\label{eq:links}
(X_\t \cap \s) = (E_{\t} \cap \s) \, \cup \,
(V \cap \s) \, \cup \, (E_{\t+\pi} \cap \s) \, \, .
\end{equation}
\end{remark}

\begin{remark}
Let $\{S_{\alpha}\}$ be a Whitney stratification of $X$ adapted to $V$.
 Since $f\colon(X\cap \B)\setminus V\to \C\setminus\{0\}$ is a submersion on each stratum,
 then for each stratum $\{S_\a\}\subset X\setminus V$, the intersection $S_\a \cap X_\t $
 is a differentiable manifold of real codimension 1 in $\{S_\a\}$.
\end{remark}

The following lemma proves statement {\bf\ref{it:stratification})} of Theorem~\ref{Thm:Can.Dec}.

\begin{lemma}\label{it:St.trans.Sp}
There exists $\e_0>0$ such that given a Whitney stratification
$\{S_{\alpha}\}$ of $X$ adapted to $V$, the intersection of the strata with each $X_\t$
determines a Whitney strong stratification of $X_\t$ such that for each stratum $S_\a\neq\{0\}$ and each
$X_\t$ one has that $S_\a \cap X_\t $ meets transversally every
sphere in the ball $\B_{\e_0}$ centred at $\0$.
\end{lemma}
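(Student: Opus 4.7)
The plan splits into two independent claims: (A) the family $\{S_\alpha\cap X_\t\}$ is a Whitney-strong stratification of $X_\t$, and (B) transversality of its strata to small spheres. For (A), I would first verify that each piece is a smooth submanifold. For a stratum $S_\alpha\subset V$ the inclusion $V\subset X_\t$ gives $S_\alpha\cap X_\t=S_\alpha$, still a stratum. For $S_\alpha\not\subset V$ the hypothesis that $f$ has no stratified critical points in $\B\setminus V$ makes $f|_{S_\alpha}\colon S_\alpha\to\C\setminus\{0\}$ a submersion, hence so is $h_\t|_{S_\alpha}=\pi_\t\circ f|_{S_\alpha}$, and $S_\alpha\cap X_\t=h_\t|_{S_\alpha}^{-1}(0)$ is a real hypersurface in $S_\alpha$. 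Local finiteness and the frontier condition transfer directly from $\{S_\alpha\}$.

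For the Kuo--Verdier $(w)$ condition on the refined strata, I would exploit Remark~\ref{rmk:all.reg.strat}: our Whitney stratification of $X$ together with the stratification $\{\{0\},\C\setminus\{0\}\}$ of $\C$ forms a complex analytic Whitney stratification of $f$, so it satisfies Thom's $(w_f)$ condition. Fix a triple $S_\alpha\subset\overline{S_\beta}$ in $X$. When both lie in $V$, the new strata equal the old and $(w)$ is inherited. When $S_\alpha\subset V$ and $S_\beta\not\subset V$, one has $T_{y'}(S_\alpha\cap X_\t)=T_{y'}S_\alpha=T_{y'}f^\alpha_{y'}$ while $T_x(S_\beta\cap X_\t)\supset T_x f^\beta_x$, and the monotonicity $\delta(A,C)\le\delta(A,B)$ for $B\subset C$ reduces $(w)$ to the known $(w_f)$ estimate. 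When both strata avoid $V$, I would apply Proposition~\ref{rugose lifting} to lift a smooth unit vector field tangent to $\cL_\t$ on $\C\setminus\{0\}$ to a rugose stratified vector field $\xi$ on $(X\cap\B)\setminus V$; then $\xi$ is tangent to $X_\t$ and at each $y'\in S_\gamma\cap X_\t$ one obtains the decomposition $T_{y'}(S_\gamma\cap X_\t)=T_{y'}f^\gamma_{y'}\oplus\R\,\xi(y')$, so combining the $(w_f)$ bound on the fibre direction with the rugose Lipschitz bound $\norm{\xi(y')-\xi(x)}\le K\norm{y'-x}$ yields the $(w)$ estimate.

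For (B), strata $S_\alpha\subset V$ give $S_\alpha\cap X_\t=S_\alpha$, which is transverse to all small spheres by Theorem~\ref{rmk:strat}. For $S_\alpha\not\subset V$ I would apply the Curve Selection Lemma to the subanalytic set $Z_\alpha=\set{x\in S_\alpha}{T_x(S_\alpha\cap X_{\t(x)})\subset x^\perp}$, where $\t(x)$ is the unique angle with $f(x)\in\cL_{\t(x)}$. If $\0\in\overline{Z_\alpha}$, a real analytic curve $\gamma(t)\to\0$ in $Z_\alpha$ would satisfy $\gamma'(t)\in T_{\gamma(t)}(S_\alpha\cap X_{\t(\gamma(t))})\subset\gamma(t)^\perp$, forcing $\norm{\gamma(t)}^2$ to be constant and contradicting $\gamma(t)\to\0$. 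Finitely many strata meet a small neighbourhood of $\0$ and the argument is uniform in $\t$, yielding a common $\e_0$. The main obstacle I anticipate is the third sub-case of (A): ensuring that the coefficients in the decomposition of a unit vector along $T_{y'}f^\gamma_{y'}\oplus\R\,\xi(y')$ remain uniformly bounded requires a lower bound on the angle between $\xi$ and the fibre, which follows from $df(\xi)$ having constant nonzero length on $\cL_\t$ but must be tracked carefully through the rugose metric chosen in the proof of Proposition~\ref{rugose lifting}.
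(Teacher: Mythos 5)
Your part (A) is essentially sound. The mixed case ($S_\alpha\subset V$, $S_\beta\not\subset V$) is exactly the paper's argument: condition $(w_f)$ from Remark~\ref{rmk:all.reg.strat} together with $T_xf^\beta_x\subset T_x(S_\beta\cap X_\t)$ and the monotonicity of $\delta$ in the second argument. For the case where both strata avoid $V$ the paper simply invokes Verdier's Remark~(3.7) in \cite{Ver} (stability of condition $(w)$ under taking the fibres of a stratified submersion), whereas your decomposition $T_{y'}(S_\gamma\cap X_\t)=T_{y'}f^\gamma_{y'}\oplus\R\,\xi(y')$ with a rugose lift $\xi$ of the radial field is a workable substitute; the coefficient bound you flag is not really about the rugose metric: writing a unit vector as $a=a_1+c\,\xi(y')$ one gets $\abs{c}=\norm{df_{y'}(a)}/\norm{u(f(y'))}$, which is bounded near a base point $y\notin V$ because $d\tilde f$ is continuous and $\abs{f}$ is bounded below there, and then $\norm{a_1}$ is bounded as well; since the constant in $(w)$ may depend on the neighbourhood of $y$, this suffices.

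The genuine gap is in part (B), for strata $S_\alpha\not\subset V$. The curve selection lemma gives a real analytic curve $\gamma(t)\to\0$ lying in $Z_\alpha$, hence in $S_\alpha$, but \emph{not} in any single member of the pencil: the angle $\t(\gamma(t))=\arg f(\gamma(t))$ varies with $t$. Hence $\gamma'(t)$ is tangent to $S_\alpha$ but need not lie in $T_{\gamma(t)}\bigl(S_\alpha\cap X_{\t(\gamma(t))}\bigr)$, so the inclusion $T_{\gamma(t)}(S_\alpha\cap X_{\t(\gamma(t))})\subset\gamma(t)^\perp$ does not give $\langle\gamma'(t),\gamma(t)\rangle=0$, and the conclusion that $\norm{\gamma(t)}$ is constant does not follow. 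A quick sanity check shows something must be wrong: your argument uses only real analyticity of $f$, so if it were correct it would prove that every real analytic germ $(\R^{n+2},\0)\to(\R^2,0)$ with isolated critical value is $d$-regular, contradicting Section~5 of the paper, where $d$-regularity is an additional hypothesis that fails in general. The transversality statement genuinely uses holomorphy: the paper deduces it from \cite[Lemma~2.4]{BLS}, which yields a continuous vector field on $\B_{\e_0}\setminus V$ tangent to the strata and to every $X_\t$ and transverse to all small spheres, and it gives a second proof via Corollary~\ref{cor:lin.in.R}, which rests on Lemmas~\ref{lem:curve.strat} and \ref{prop:arg.lambda}, where the key point --- that the leading coefficient $\lambda_0$ is a positive real number --- comes precisely from $f$ being holomorphic. (Your treatment of strata contained in $V$, where $S_\alpha\cap X_\t=S_\alpha$ and transversality is built into the adapted stratification, is fine.)
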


\begin{proof}
Since $\bar{S}_\a \cap X_\t = \overline{S_\a \cap X_\t}$, it is
clear that the partition of $X_\t$ induced by the intersection
with the strata $\{ S_\a\}$ satisfies the frontier condition and
defines a stratification of each $X_\t$. To see that this
stratification of $X_\t$ is Whitney-strong, first note that by
\cite[Rem.~(3.7)]{Ver}, $\{(X_\t\setminus V)\cap S_\a\}$ is a
Whitney-strong stratification of $X_\t\setminus V$, hence we just
need to check condition $(w)$ for triples $(y,S_\a,(S_\beta\cap
X_\t))$, with $S_\a\subset V$ and $(S_\beta\cap X_\t)\subset
X_\t\setminus V$. By Remark~\ref{rmk:all.reg.strat} $f$ satisfies
condition $(w_f)$, so there exists a neighbourhood
$\mathcal{V}_{y}$ of $y\in S_\alpha$ in $\C^n$ and a constant
$D>0$ such that for all $y'\in \mathcal{V}_{y}\cap S_\alpha$ and
$x\in \mathcal{V}_{y}\cap S_\beta$,
\begin{equation*}
\delta(T_{y'}S_\a,T_xf^\beta_x)\leq D \norm{y'-x}.
\end{equation*}
Since $T_xf^\beta_x\subset T_x(S_\beta\cap X_\t)$, for every $y'\in
\mathcal{V}_{y}\cap S_\alpha$ and $x\in \mathcal{V}_{y}\cap
(S_\beta\cap X_\t)$,
\begin{equation*}
\delta(T_{y'}S_\a,T_x(S_\beta\cap X_\t))\leq\delta(T_{y'}S_\alpha,T_xf^\beta_x)\leq D \norm{y'-x}.
\end{equation*}
Therefore the triple $(y,S_\a,(S_\beta\cap X_\t))$ satisfies
condition $(w)$. We claim that for each $\t$, each stratum $
S_\alpha \cap X_\t$ meets transversally every sufficiently small
sphere around $\0$. This is in fact an immediate consequence of
\cite[Lemma~2.4]{BLS}, which implies the existence of a continuous
vector field $v$ on $\B_{\e_0}\setminus V$ which is tangent to
each  $S_\a$, tangent to each $X_\t$ and transverse to every
sufficiently small sphere around $\0$.
\end{proof}

\subsection{The spherefication map}
\label{subsec:spherification}

We now introduce an auxiliary function associated to $f$, the
spherefication, which is very helpful for studying Milnor
fibrations.

As in Theorem~\ref{Thm:fib.thm}, define $\Phi\colon (X\cap
\B_\e)\setminus V\to\s^1$ by $\Phi(x)=\frac{f(x)}{\norm{f(x)}} $.
Define the real analytic map $\,\mathfrak{F}\colon(X\cap
\B_\e)\setminus V\to \C\setminus\{0\} \,,$ by
\begin{equation}\label{eq:def.g}
\mathfrak{F}(x)=\norm{x}\Phi(x)= \norm{x} \,\frac{
f(x)}{\norm{f(x)}}\,.
\end{equation}
Notice that from the definition we have:
\begin{equation}\label{eq:phi.f.F}
 \Phi=\frac{\mathfrak{F}(x)}{\norm{\mathfrak{F}(x)}}=\frac{f(x)}{\norm{f(x)}}.
\end{equation}
Also notice that given $z\in\C\setminus\{0\}$ with $\t=\arg z$, the
fibre $\mathfrak{F}^{-1}(z)$ is the intersection of $X_\t$ with the
sphere $\s_{\abs{z}}$ of radius $\abs{z}$ centred at $\0$, and
$\mathfrak{F}$ carries $\s_{\abs{z}} \setminus V$ into the circle
around  $0 \in \R^2$ of radius  $\abs{z}$. This motivates the
following definition:

\begin{definition} The analytic map $\mathfrak{F}$ is called the {\it  spherefication} of $f$.
\end{definition}

\begin{lemma}\label{lem:g.sub}
Let $\e_0>0$ as in Lemma~\ref{it:St.trans.Sp} and let $\{S_{\alpha}\}$ be a Whitney stratification adapted to $V$.
Then the   spherefication map $\mathfrak{F}$ is a submersion on each stratum.
\end{lemma}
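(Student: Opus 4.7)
The plan is to reduce the submersion property to a pair of linearly independent conditions using a polar decomposition of $\mathfrak{F}$. Away from $V$ the map $\mathfrak{F}$ takes values in $\C\setminus\{0\}$, and in polar coordinates $(r,\t)\mapsto r e^{i\t}$ it is expressed by $r(x)=\norm{x}$ and $\t(x)=\arg f(x)$ (choosing a local branch near any given point). Since $(r,\t)\mapsto re^{i\t}$ is a local diffeomorphism of $\R_{>0}\times \R$ onto $\C\setminus\{0\}$, showing that $\mathfrak{F}$ is a submersion on a stratum $S_\a$ at $x$ is equivalent to showing that $dr|_{T_xS_\a}$ and $d\t|_{T_xS_\a}$ are linearly independent linear forms.

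First I would verify that each of $dr$ and $d\t$ is individually nonzero on $T_xS_\a$. For $dr$, this is immediate: by the choice of $\e_0$ and by Lemma~\ref{it:St.trans.Sp}, each stratum $S_\a$ meets every sphere centred at $\0$ transversally inside $\B_{\e_0}$, so the restriction of the norm function to $S_\a$ has no critical points, giving $dr|_{T_xS_\a}\neq 0$ with kernel $T_x(S_\a\cap \s_{\norm{x}})$. For $d\t$, note that since $f$ has no stratified critical points on $(X\cap \B_\e)\setminus V$, the restriction $f|_{S_\a}\colon S_\a\to \C\setminus\{0\}$ is a submersion; composing with the submersion $\arg\colon \C\setminus\{0\}\to \s^1$ yields that $\t=\arg f|_{S_\a}$ is a submersion, with kernel $T_x(S_\a\cap X_{\t(x)})$.

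The crucial step is now to show that the two hyperplanes $\ker(dr|_{T_xS_\a})=T_x(S_\a\cap\s_{\norm{x}})$ and $\ker(d\t|_{T_xS_\a})=T_x(S_\a\cap X_{\t(x)})$ inside $T_xS_\a$ are distinct, which forces $dr$ and $d\t$ to be linearly independent. But this is exactly the content of Lemma~\ref{it:St.trans.Sp}: for each stratum $S_\a$ and each $X_\t$, the intersection $S_\a\cap X_\t$ is transverse to every sphere in $\B_{\e_0}$ centred at $\0$. This transversality implies that $T_x(S_\a\cap X_{\t(x)})+T_x(S_\a\cap\s_{\norm{x}})=T_xS_\a$, which in particular prevents the two codimension-one subspaces from coinciding. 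Hence $dr$ and $d\t$ are linearly independent on $T_xS_\a$, and $\mathfrak{F}$ is a submersion on $S_\a$.

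The main (and essentially only) obstacle is recognizing that the two pieces of information encoded in $\mathfrak{F}$ — the radial component coming from $\norm{x}$ and the angular component coming from $f$ — are independent precisely because of the transversality statement in Lemma~\ref{it:St.trans.Sp}; once this is noticed, the argument is a routine polar-coordinate computation and requires no further machinery.
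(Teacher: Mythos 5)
Your proposal is correct and is essentially the paper's own argument in dual form: the paper exhibits a vector $\mu\in T_x(S_\a\cap\s_{\norm{x}})$ whose image under $d_x\mathfrak{F}|_{S_\a}$ is nonzero and tangent to the circle, plus a vector $\nu\in T_xS_\a$ transverse to the sphere whose image is transverse to the circle, which is exactly your statement that $dr|_{T_xS_\a}$ and $d\t|_{T_xS_\a}$ are nonzero with distinct kernels. Both arguments rest on the same two ingredients — the strata being transverse to the spheres and the transversality of $S_\a\cap X_\t$ with the spheres from Lemma~\ref{it:St.trans.Sp} — so no further comment is needed.
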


\begin{proof}
Let $x\in S_\a$ and $\t=\arg f(x)$, then $x\in S_\a\cap X_\t$  and by
Lemma~\ref{it:St.trans.Sp}, $S_\a\cap X_\t$ is transverse to the sphere
$\s_{\norm{x}}$ of radius $\norm{x}$. Hence there is a vector
$\mu\in T_x(S_\a\cap\s_{\norm{x}})$ such that
$d_x \mathfrak{F}|_{S_\a}(\mu)$ is a non-zero vector in
$T_{\mathfrak{F}(x)}\s^1_{\norm{x}}$. On the other hand, since $S_\a$
and $\s_{\norm{x}}$ are transverse, there is a vector $\nu\in
T_xS_\a$ transverse to $\s_{\norm{x}}$, and since the fibre of
$\mathfrak{F}$ through $x$ is contained in $\s_{\norm{x}}$, we have
that $d_x \mathfrak{F}|_{S_\a}(\nu)$ is a non-zero vector transverse to
$\s^1_{\norm{x}}$.
\end{proof}

Now we prove statement~{\bf\ref{it:cones})} of Theorem~\ref{Thm:Can.Dec}.
For this we use:

\begin{proposition}\label{lem:uni.con.str}
Let $\{S_\alpha\}$ be a Whitney stratification of $X$ adapted to
$V$  and for each $\t \in [0,\pi)$ equip  $X_\t$ with
the stratification $\{X_\t\cap S_\a\}$  obtained by
intersecting $X_\t$ with the strata of $\{S_\alpha\}$. Then for every
sufficiently small ball $\B_\e$ around $\0$,
 there exists  a stratified, rugose vector field $v$ on
 $X\cap\B_\e$ which has the following properties:
\begin{enumerate}[i)]\setlength{\itemsep}{0pt}
\item
It is radial, {\it i.e.}, it is transverse to the
intersection of $X$ with all spheres in $\B_\e$ centred at
$\0$.\label{it:pr1}

\item It is tangent to the strata of each $X_\t$.\label{it:pr2}

\item $v(\0)=0$.\label{it:pr3}
\end{enumerate}
\end{proposition}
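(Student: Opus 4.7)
The strategy is to construct $v$ by applying Verdier's rugose lifting (Proposition~\ref{rugose lifting}) twice: once off the axis $V$ through the spherefication $\mathfrak{F}$, and once on $V\setminus\{\0\}$ through the squared-norm function. The two lifts are then assembled by a rugose partition of unity and extended by zero at $\0$.

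\textbf{Step 1.} By Theorem~\ref{rmk:strat}, fix a Whitney-strong analytic stratification $\{S_\a\}$ of $X$ adapted to $V$ in which $\{\0\}$ is a stratum and every stratum in $\B_\e\setminus\{\0\}$ meets each sphere in $\B_\e$ about $\0$ transversally.

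\textbf{Step 2} (off the axis). The spherefication $\mathfrak{F}\colon (X\cap\B_\e)\setminus V\to\C\setminus\{0\}$ is real analytic and, by Lemma~\ref{lem:g.sub}, restricts to a submersion on every stratum. On $\C\setminus\{0\}$ take the smooth Euler vector field $\eta(z)=z$. Proposition~\ref{rugose lifting} then produces a rugose stratified vector field $w$ on $(X\cap\B_\e)\setminus V$ with $d\mathfrak{F}(w)=\eta\circ\mathfrak{F}$. Since $\eta$ is tangent to each half-line through $0\in\C$, the flow of $w$ preserves $\arg\mathfrak{F}=\arg f$, so $w$ is tangent to every $X_\t$. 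Since $\abs{\mathfrak{F}(x)}=\norm{x}$ and the radial component of $\eta$ at $\mathfrak{F}(x)$ equals $\norm{x}$, one gets $d\norm{\cdot}(w(x))=\norm{x}>0$, so $w$ is transverse to every sphere about $\0$.

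\textbf{Step 3} (along the axis). Apply Proposition~\ref{rugose lifting} on $V\cap\B_\e\setminus\{\0\}$ to $g(x)=\norm{x}^2$; by Step~1 this is a submersion on every stratum of $V$. Lifting the smooth vector field $2t\,\partial_t$ on $(0,\e^2)$ yields a rugose stratified vector field $u$ on $V\cap\B_\e\setminus\{\0\}$, tangent to the strata of $V$ and satisfying $d\norm{\cdot}(u(x))=\norm{x}$. Since $V\subset X_\t$ for every $\t$, at a smooth point $x\in V$ one has $T_xV\subset T_xX_\t$, so $u$ is automatically tangent to each $X_\t$.

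\textbf{Step 4} (gluing and the origin). Use a rugose partition of unity subordinate to the open cover $\{(X\cap\B_\e)\setminus V,\, V\cap\B_\e\setminus\{\0\}\}$ of $(X\cap\B_\e)\setminus\{\0\}$ to assemble $w$ and $u$ into a single stratified vector field $v$. Since the partition is rugose and each summand is rugose, $v$ inherits rugosity; since both summands are tangent to strata and to every $X_\t$ and both have positive radial component $\norm{x}$, so does $v$. Finally set $v(\0)=0$: by our choice of target fields, both $w(x)$ and $u(x)$ have norm bounded by a constant multiple of $\norm{x}$, which makes the extension continuous and rugose at the origin.

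\textbf{Main obstacle.} The delicate point is rugosity of $v$ at triples $(x_0,S_\a,S_\beta)$ with $S_\a\subset V$ and $S_\beta\not\subset V$, where the two lifts come from genuinely different Verdier constructions and must satisfy $\norm{w(x)-u(y')}\leq K\norm{x-y'}$. This is where the Thom $(w_f)$-property of $f$ (Remark~\ref{rmk:all.reg.strat}) is decisive: it controls at a Lipschitz rate the angular tilt of the fibres of $f$ as one approaches $V$, and via the horizontal-subbundle construction in Verdier's proof of Proposition~\ref{rugose lifting} this translates into the Lipschitz estimate needed to match $w$ with $u$ across $V$.
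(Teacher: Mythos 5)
There is a genuine gap, and it sits exactly where you point: the behaviour of your field across $V$. First, a formal problem: $\{(X\cap\B_\e)\setminus V,\ V\cap\B_\e\setminus\{\0\}\}$ is \emph{not} an open cover of $(X\cap\B_\e)\setminus\{\0\}$, since $V\setminus\{\0\}$ is closed, not open, in $X\setminus\{\0\}$; the two sets are disjoint, so a ``partition of unity subordinate to this cover'' does not blend anything --- your $v$ is simply $w$ off $V$ and $u$ on $V$, defined piecewise. The whole content of the proposition then reduces to showing that this piecewise field is rugose (and even just bounded) for pairs $y'\in S_\a\subset V$, $x\in S_\beta\not\subset V$, i.e. $\norm{w(x)-u(y')}\leq K\norm{x-y'}$. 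Your Step 2 gives no such control: Proposition~\ref{rugose lifting} is applied with $A=(X\cap\B_\e)\setminus V$, so the rugosity of $w$ only involves strata contained in $A$ and says nothing as $x$ approaches $V$; moreover $u$ and $w$ come from two unrelated Verdier lifts (of $2t\,\partial_t$ via $\norm{\cdot}^2$ on $V$, and of the Euler field via $\mathfrak{F}$ off $V$), so there is no reason $w(x)$ should approach $u(y')$ at all, let alone at a Lipschitz rate. Invoking $(w_f)$ in the closing paragraph is the right instinct but is not an argument: $(w_f)$ compares tangent spaces of fibres of $f$, and to convert that into the needed estimate you would have to coordinate the two horizontal subbundles across $V$ --- which your construction never does. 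Similarly, the claimed bound $\norm{w(x)}\leq C\norm{x}$ near $V$ (used for rugosity at $\0$) is asserted, not proved; a Verdier lift on the open part need not stay bounded near the frontier $V$.

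The paper closes this gap by a different mechanism. Instead of a second, independent lift on $V$, it takes Schwartz's radial vector field $v_{rad}$, which is already rugose on all of $X\cap\B_\e$ (in particular across $V$), keeps it on $V$, and on each stratum $S_\beta\not\subset V$ replaces $v_{rad}(x)$ by its orthogonal projection onto $T_xf^\beta_x\subset T_x(X_\t\cap S_\beta)$. Rugosity of the modified field $\tilde v_{rad}$ across $V$ is then proved by a two-term estimate: the projection error $\norm{v_{rad}(y')-\pi(v_{rad}(y'))}\leq MD\norm{y'-x}$ is controlled by the strict Thom condition $(w_f)$ of Remark~\ref{rmk:all.reg.strat}, and $\norm{\pi(v_{rad}(y'))-\pi(v_{rad}(x))}\leq K\norm{y'-x}$ follows because orthogonal projections are $1$-Lipschitz and $v_{rad}$ is rugose. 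Only after that does the paper glue $\tilde v_{rad}$ with the $\mathfrak{F}$-lift $v_{\mathfrak{F}}$ (your $w$) by a rugose partition of unity \emph{on $X\setminus V$}, where both fields are defined, to recover transversality to the spheres that the projection may have destroyed off $V$. So your Step 2 coincides with the paper, but Steps 3--4 replace the paper's key construction by an unproved matching claim; as written, properties i)--iii) hold separately on $V$ and off $V$, but the existence of a single \emph{rugose stratified} field on $X\cap\B_\e$ --- the point of the proposition, and what later makes the flow give the uniform conical structure --- is not established.
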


\begin{proof}
Let $u$ be the canonical radial vector field on $\C$ given by
$u(z)=z$. Using  $\mathfrak{F}$, by Proposition~\ref{rugose lifting}
we can lift $u$ to a stratified
rugose vector field $v_{\mathfrak{F}}$ on $(X\cap \B)\setminus V$, which satisfies
\begin{equation*}
d\mathfrak{F}_x(v_{\mathfrak{F}}(x))=u(\mathfrak{F}(x)),
\end{equation*}
for every $x\in(X\cap \B)\setminus V$, where $d$ is the
derivative. By the definition of $\mathfrak{F}$, the local flow
associated to $v_{\mathfrak{F}}$ is transverse to all spheres in
$\B_\e$ centred at $\0$, and by construction, its integral paths
move along $X_\t\setminus V$, i.e., it already satisfies
conditions \ref{it:pr1}) and \ref{it:pr2}) and it is rugose.

Recall that on $X$ we have a rugose vector field $v_{rad}$  which is
radial at $\0$, {\it i.e.}, it is tangent to each stratum and
transverse to all spheres around $0$, which gives the conical
structure of $X$ (\cite[Prop.~3.1.7]{Sch}). The idea to construct
the vector field $v$ satisfying the conditions of Proposition
\ref{lem:uni.con.str} is to  glue
the vector field $v_{\mathfrak{F}}$
on $(X\cap\B_\e)\setminus V$ with the vector field $v_{rad}$ on $X$ in
such a way that it keeps satisfying properties \ref{it:pr1}) and \ref{it:pr2}). The
problem is that   $v_{rad}$ may not be tangent to the strata of the
$X_\t$, so we must modify it  appropriately.  We will denote the
modified  vector field by $\tilde{v}_{rad}$. This will be a rugose
vector field defined on $X \cap B_\e$, which is radial on $V$,
tangent to each stratum $X_\t \cap S_\a$ and such that gluing
$\tilde{v}_{rad}$ and $v_{\mathfrak{F}}$ on $X\setminus V$ by a
rugose  partition of unity, we obtain a vector field $v$
with properties \ref{it:pr1}) to \ref{it:pr3}). This  is constructed as follows.

In \cite{Sch} the stratified radial vector field $v_{rad}$ is constructed by induction
 on the dimension of the strata \cite[\S1.5, Thm.~3.1.5]{Sch}, and it is shown (\cite[Prop. 3.1.7]{Sch})
that it can be assumed to be rugose. We  modify $v_{rad}$ to  have the desired property  at each stage.
To start the induction, define $\tilde{v}_{rad}(\0)=0$ to get property \ref{it:pr3}). Now
suppose that we have constructed $\tilde{v}_{rad}$ on the strata
of dimension less than $p$ and it is rugose, which is always possible by \cite[Prop. 3.1.7]{Sch}.

Let $S_\beta$ be a stratum of dimension $p$. Extend $\tilde{v}_{rad}$ to a radial
stratified rugose vector field $v_{rad}$ on $S_\beta$ as in \cite[Thm.~3.1.5]{Sch}.
If $S_\beta\subset V$, for every $x\in S_\beta$ we define $\tilde{v}_{rad}(x)=v_{rad}(x)$.
If $S_\beta\subset X\setminus V$, let $x\in S_\beta$ and denote by $f^\beta_x$ the fibre
 of $f|_{S_\beta}$ which contains $x$, {\it i.e.}, $f^\beta_x=f|_{S_\beta}^{-1}(f(x))$. Since
 $f(x)$ is a regular value of $f|_{S_\beta}$, $f^\beta_x$ is a differentiable submanifold
of $S_\beta$. Clearly $f^\beta_x\subset X_\t\cap S_\beta\subset
X_\t\setminus V$ with $\t=\arg f(x)$.
 Define the vector $\tilde{v}_{rad}(x)$ by projecting the vector $v_{rad}(x)$ to
 the tangent space $T_x f^\beta_x\subset T_x(X_\t\cap S_\beta)$.

We claim that $\tilde{v}_{rad}$ is also rugose. For this, let $S_\a$
be a stratum of dimension less than $p$ such that
$S_\a\subset\bar{S}_\beta$ and let $y\in S_\a$. Since the
stratification of $f$ satisfies condition $(w_f)$ (see Remark
\ref{rmk:all.reg.strat}), there exists a neighbourhood
$\mathcal{V}_{y}$
 of $y$ where the following inequality is satisfied.
\begin{equation}\label{eq:wf.in.alpha}
\delta(T_{y'}f^\alpha_{y'},T_xf^\beta_x)\leq D \norm{y'-x},
\end{equation}
for all $y'\in \mathcal{V}_{y}\cap S_\alpha$ and all
$x\in \mathcal{V}_{y}\cap S_\beta$.

On the other hand, since the vector field $v_{rad}$ is rugose,
there exists a neighbourhood $\mathcal{W}_{y}$ of $y$ where the
following inequality is satisfied
\begin{equation}\label{eq:rug.in.alpha}
\norm{v_{rad}(y')-v_{rad}(x)}
\leq K\norm{y'-x},
\end{equation}
for every $y'\in \mathcal{W}_{y}\cap S_\a$ and every
$x\in\mathcal{W}_{y}\cap S_\beta$.

Let $\mathcal{N}_{y}$ be an open ball around $y$ such that
$\mathcal{N}_{y}\subset\mathcal{V}_{y}\cap\mathcal{W}_{y}$ and set
\begin{equation*}
M=\sup_{y'\in\overline{\mathcal{N}_{y}\cap S_\a}}\norm{v_{rad}(y')}.
\end{equation*}
Let $y'\in\mathcal{N}_{y}\cap S_\a$ and  $x\in\mathcal{N}_{y}\cap
S_\beta$.

\vspace*{12pt}
\noindent\textbf{Case 1:} $S_\beta\subset V$.
\vspace*{12pt}

Since in $V$ the vector field $\tilde{v}_{rad}$ equals $v_{rad}$ and $v_{rad}$ is rugose,
by \eqref{eq:rug.in.alpha} we have
\begin{equation*}
\norm{\tilde{v}_{rad}(y')-\tilde{v}_{rad}(x)}=
\norm{v_{rad}(y')-v_{rad}(x)}\leq K\norm{y'-x}.
\end{equation*}
Hence $\tilde{v}_{rad}$ satisfies inequality \eqref{eq:rugose}
in $\mathcal{N}_{y}$.

\vspace*{12pt}
\noindent\textbf{Case 2:} $S_\beta\subset X\setminus V$.
\vspace*{12pt}

Let $\pi$ be the orthogonal projection of $\C^n$ onto $T_x f^\beta_x$.
By \eqref{eq:wf.in.alpha} and \eqref{eq:angle.def} we have that
\begin{equation*}
\biggl\Vert\frac{v_{rad}(y')}{\norm{v_{rad}(y')}}-\pi\biggl(\frac{v_{rad}(y')}
{\norm{v_{rad}(y')}}\biggr)\biggr\Vert\leq\delta(T_{y'}f^\alpha_{y'},T_xf^\beta_x)\leq D\norm{y'-x}.\\
\end{equation*}
Hence
\begin{equation}\label{eq:wf.ineq}
\norm{v_{rad}(y')-\pi(v_{rad}(y'))}\leq\norm{v_{rad}(y')}D\norm{y'-x}
\leq MD\norm{y'-x}.
\end{equation}
On the other hand, since $\pi$ is an orthogonal projection, by
\eqref{eq:rug.in.alpha} one has:
\begin{equation}\label{eq:rug.ineq}
\norm{\pi(v_{rad}(y'))-\pi(v_{rad}(x))}\leq \norm{v_{rad}(y')-v_{rad}(x)}\leq K\norm{y'-x}.
\end{equation}
Therefore using \eqref{eq:wf.ineq} and \eqref{eq:rug.ineq} we get:
\begin{align*}
\norm{\tilde{v}_{rad}(y')-\tilde{v}_{rad}(x)}&=\norm{v_{rad}(y')-\pi(v_{rad}(x))}\\
    &\leq \norm{v_{rad}(y')-\pi(v_{rad}(y'))}
    +\norm{\pi(v_{rad}(y'))-\pi(v_{rad}(x))}\\
    &\leq (MD+K)\norm{y'-x}.
\end{align*}
Hence $\tilde{v}_{rad}$ satisfies inequality \eqref{eq:rugose} in $\mathcal{N}_{y}$,
proving that $\tilde{v}_{rad}$ is rugose and therefore integrable \cite[Prop.~(4.8)]{Ver}.

Notice that when we modify the radial vector field $v_{rad}$ to obtain $\tilde{v}_{rad}$,
 it may happen that $\tilde{v}_{rad}(x)$ is no longer transverse to the sphere,
 this is the case if the fibre through $x$ is tangent to the
 sphere; it may even happen that $\tilde{v}_{rad}(x)$ vanishes.

Gluing $\tilde{v}_{rad}$ and $v_{\mathfrak{F}}$ on $X\setminus V$ by a rugose
 partition of unity we obtain a vector field $v$ defined on all of $\B_\e$
 with the desired properties.
\end{proof}

\begin{proof}[Proof of {\bf\ref{it:cones})} in Theorem~\ref{Thm:Can.Dec}]

Recall that $\inpr{\,\cdot\,}{\,\cdot\,}$ is the Hermitian inner product on $\C^n$ and that
we consider $\C^n$ as a $2n$-dimensional Euclidean real vector space defining
the Euclidean inner product as the real part $\Re\inpr{\,\cdot\,}{\,\cdot\,}$.
Let $r\colon\C^n\to\R$ be the real analytic function defined by $r(x)=\norm{x}^2$.
The \emph{real} gradient of $r$ is given by $\grad_\R r(x)=2x$, so the chain
rule for the derivative of $r$ along a path $x=p(t)$ takes the form
\begin{equation}\label{eq:ch.rule}
 \frac{dr(p(t))}{dt}=\Re\Bigl\langle\frac{dp}{dt},2x\Bigr\rangle.
\end{equation}
We proceed as in the proof of \cite[Thm.~2.10]{Mi2}: let $v$ be the
vector field constructed in Proposition~\ref{lem:uni.con.str}.
By property \ref{it:pr1}) $v$ is a radial
vector field pointing away from $\0$, thus we have that
\begin{equation*}
 \Re\inpr{v(x)}{x}>0.
\end{equation*}
 Normalise $v$ by setting
\begin{equation*}
 \hat{v}(x)=v(x)/\Re\inpr{v(x)}{2x}.
\end{equation*}
Let $x=p(t)$ be a solution of the differential equation
\begin{equation}\label{eq:diff.eq}
 \frac{dp(t)}{dt}=\hat{v}(p(t)).
\end{equation}
Then by \eqref{eq:ch.rule} we have that
\begin{equation*}
\frac{dr}{dt}(x)=\Re\inpr{\hat{v}(x)}{2x}=1.
\end{equation*}
Therefore $r(p(t))=t+\text{constant}$. Substracting a constant from the parameter $t$ if necessary, we may
suppose that
\begin{equation*}
 r(p(t))=\norm{p(t)}^2=t.
\end{equation*}
By \cite[p.~20]{Mi2}, the solution $p(t)$ can be extended through the interval $(0,\e^2]$.

For every $a\in X\cap\s_\e$ let $p(t)=P(a,t)$ be the unique solution of
\eqref{eq:diff.eq} which satisfies the initial condition $p(\e^2)=P(a,\e^2)=a$.
Clearly this function $P$ gives a \emph{rugose homeomorphism} from the product
$L_X\times(0,\e^2]$ onto $(X\setminus\{\0\})\cap\B_\e$.

Since $\hat{v}$ is a stratified vector field and $V$ is a finite union of strata,
any solution curve which touches a stratum $S_\a\subset V$ must be contained in $S_\a\subset V$. Therefore, $P$
is in fact a rugose homeomorphism of pairs from $(L_X\times(0,\e^2],L_f\times(0,\e^2])$ onto
$((X\setminus\{\0\})\cap\B_\e,(V\setminus\{\0\})\cap\B_\e)$.

Furthermore, since the vector field $v(x)$ is tangent to the stratum
$X_\t\cap S_\a$ of $X_\t\setminus V$ for all $x\in X\cap S_\a$,
every solution curve which touches $X_\t\cap S_\a$
must be contained in $X_\t\cap S_\a\subset X_\t\setminus V$.
Hence $P$ restricts to a rugose homeomorphism $(X_\t \cap \s_\e)\times(0,\e^2] \cong ((X_\t\setminus\{\0\}) \cap \B_\e)$ for
every $\t\in [0,\pi)$.

Finally, note that $P(a,t)$ tends uniformly to $\0$ as $t\to 0$. Therefore the correspondence
\begin{equation*}
 ta\to P(a,t),\quad 0<t\leq 1,\quad a\in X\cap\s_\e,
\end{equation*}
extends uniquely to a homeomorphism from $\hbox{Cone}(L_X)$ to $(X\cap\B_\e)$
and we arrive to statement {\bf\ref{it:cones})} in Theorem~\ref{Thm:Can.Dec}.
\end{proof}

\medskip

\begin{remark}
Notice that  if $X \setminus \0$ is non-singular,   the proof
above leads to a smooth vector field having properties \ref{it:pr1})-\ref{it:pr3}) in Proposition~\ref{lem:uni.con.str}.
This yields to
  a {\it diffeomorphism} between
 $X \setminus \0$ and the cylinder $L_X \times (-\infty,0]$,
where $L_X$ is the link of $X$, inducing for each $\t$ a
diffeomorphism $X_\t  \setminus \0 \cong L_{X_\t} \times (-\infty,0]$.
\end{remark}

\vskip 4pt

Let us denote by $\X_{(X\setminus V)\cap\B_\e }$ the decomposition of
$(X\setminus V)\cap\B_\e $ as the union of all $(X_\t \setminus V)\cap\B_\e$; similarly,
denote by $\X_{L_X \setminus L_f}$ the decomposition of $L_X \setminus L_f$ as the union
of all  $(X_\t\cap \partial \B_\e) \setminus L_f$.

\medskip

The following is an immediate consequence of the proof of
Proposition~\ref{lem:uni.con.str}.

\begin{corollary}  The decomposition $\X_{(X\setminus V)\cap\B_\e }$ is
homeomorphic to the cylinder $(\X_{L_X \setminus L_f}) \times
(-\infty,0]$.   Furthermore, if $X$ is non-singular away from $V$,
then the above homeomorphism can be taken to be a
 diffeomorphism.
\end{corollary}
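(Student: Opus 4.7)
The plan is simply to harvest the flow construction already carried out in the proof of Proposition~\ref{lem:uni.con.str}. Recall that the vector field $v$ produced there is (i) transverse to every small sphere around $\0$, (ii) tangent to each stratum of each $X_\t$, and (iii) vanishes at $\0$. Its normalised flow $P$ was shown to yield a rugose homeomorphism of pairs
\begin{equation*}
P\colon \bigl(L_X\times(0,\e^2],\,L_f\times(0,\e^2]\bigr)\longrightarrow \bigl((X\setminus\{\0\})\cap\B_\e,\,(V\setminus\{\0\})\cap\B_\e\bigr),
\end{equation*}
and, because of tangency property~\ref{it:pr2}), to restrict for every $\t\in[0,\pi)$ to a rugose homeomorphism $(X_\t\cap\s_\e)\times(0,\e^2]\cong(X_\t\setminus\{\0\})\cap\B_\e$.

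First I would remove the axis, i.e.\ pass to the complement of $L_f\times(0,\e^2]$. On this open subset $P$ takes values in $(X\setminus V)\cap\B_\e$ and provides a rugose homeomorphism
\begin{equation*}
P\colon (L_X\setminus L_f)\times(0,\e^2] \longrightarrow (X\setminus V)\cap\B_\e,
\end{equation*}
which, for every $\t$, sends $\bigl((X_\t\cap\s_\e)\setminus L_f\bigr)\times(0,\e^2]$ onto $(X_\t\setminus V)\cap\B_\e$. By the definition of the two decompositions this is exactly an isomorphism of decomposed spaces between $\X_{L_X\setminus L_f}\times(0,\e^2]$ and $\X_{(X\setminus V)\cap\B_\e}$. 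A straightforward reparametrisation of the half-open interval, for instance $t\mapsto\log(t/\e^2)$, identifies $(0,\e^2]$ with $(-\infty,0]$ and delivers the stated cylinder.

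For the second claim, the remark placed just before the corollary already observes that when $X\setminus\{\0\}$ is non-singular the vector field $v$ can be chosen smooth on $X\setminus V$; then $P$ is a diffeomorphism there, and since the reparametrisation above is smooth, the homeomorphism of decompositions is in fact a diffeomorphism. I do not anticipate a real obstacle: the entire content of the corollary is a repackaging of the flow already built in Proposition~\ref{lem:uni.con.str}, the one point worth underlining being that the simultaneous tangency of $v$ to every $X_\t$ is precisely what guarantees that $P$ respects the two decompositions at once.
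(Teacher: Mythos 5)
Your proposal is correct and is essentially the paper's own argument: the corollary is stated there as an immediate consequence of the flow $P$ built from the vector field of Proposition~\ref{lem:uni.con.str}, exactly as you harvest it, with the same restriction away from $V$, the same tangency-to-each-$X_\t$ point, and a reparametrisation of $(0,\e^2]$ onto $(-\infty,0]$. The only cosmetic difference is that the preceding remark assumes $X\setminus\{\0\}$ non-singular while the corollary only assumes $X$ non-singular away from $V$, but this is harmless since the diffeomorphism statement concerns only $(X\setminus V)\cap\B_\e$, where the vector field and its flow are smooth under that weaker hypothesis.
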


\vskip 24pt

%%%%%%%%%%%%%%%%%%%%%%

\section{The Fibration theorems}\label{section-Fibration}

In this section we prove Theorems~2 and 3. The proof of Theorem~3
is based on the results in Section \ref{section-Conical-structure}
together with Lemmas~\ref{lem:curve.strat} and
\ref{prop:arg.lambda} below, which are extensions of Milnor's
Lemmas 4.3 and 4.4 in \cite {Milnor:ISH} to the case of maps
defined on singular varieties.

Throughout this section let $\tilde{f}\colon U\to \C$ be an analytic extension of $f$ to
an open neighbourhood $U$ of $\0$ in $\C^n$.

\begin{lemma}\label{lem:curve.strat}
Let $S_\a$ be a stratum not contained in $V$, $x\in S_\a$ and let
$\pi_{\alpha_x}$ be the orthogonal projection of $\C^n$ onto
$T_xS_\a$. Let $p\colon[0,\e)\to S_\a$ be a real analytic path
with $p(0)=\0$ such that, for each $t>0$, the number
$f\bigl(p(t)\bigr)$ is non-zero and the vector
$\pi_{\alpha_{p(t)}}\bigl(\grad\log
\tilde{f}\bigl(p(t)\bigr)\bigr)$ is a complex multiple
$\lambda(t)\pi_{\alpha_{p(t)}}(p(t))$. Then the argument of the
complex number $\lambda(t)$ tends to zero as $t\to 0$. In other
words $\lambda(t)$ is non-zero for small positive values of $t$
and $\lim_{t\to 0}\frac{\lambda(t)}{\abs{\lambda(t)}}=1$.
\end{lemma}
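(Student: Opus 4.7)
The plan is to adapt Milnor's argument from \cite[Lemma~4.3]{Milnor:ISH} to the stratified setting. The only genuinely new point is that the orthogonal projection $\pi_{\alpha_{p(t)}}$ can be eliminated inside the Hermitian inner product $\langle p'(t),\,\cdot\,\rangle$, because the velocity $p'(t)$ is already contained in $T_{p(t)}S_\a$; after that the proof reduces to a Puiseux-series calculation essentially identical to Milnor's.

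First I would observe that, since the path lies in $S_\a$, its velocity satisfies $p'(t)\in T_{p(t)}S_\a$, and since $\pi_{\alpha_{p(t)}}$ is the Hermitian-orthogonal projection onto this subspace, one has $\langle p'(t),\pi_{\alpha_{p(t)}}(w)\rangle=\langle p'(t),w\rangle$ for every $w\in\C^n$. Applying this identity once with $w=\grad\log\tilde f(p(t))$ and once with $w=p(t)$, combined with the standing hypothesis $\pi_{\alpha_{p(t)}}\bigl(\grad\log\tilde f(p(t))\bigr)=\lambda(t)\,\pi_{\alpha_{p(t)}}(p(t))$, yields the key identity
\begin{equation*}
\frac{d}{dt}\log\tilde f\bigl(p(t)\bigr)=\bigl\langle p'(t),\grad\log\tilde f(p(t))\bigr\rangle=\overline{\lambda(t)}\,\bigl\langle p'(t),p(t)\bigr\rangle,
\end{equation*}
where on the left I have used the chain rule in the form that relates $\frac{d}{dt}\log\tilde f(p(t))$ to the Hermitian pairing of $p'(t)$ with $\grad\log\tilde f(p(t))$.

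Next I would expand both sides in powers of $t$. Since $p$ is real analytic with $p(0)=\0$, write $p(t)=a\,t^\alpha+O(t^{\alpha+1})$ with $a\neq 0$ and integer $\alpha\ge 1$; then $p'(t)=\alpha a\,t^{\alpha-1}+O(t^\alpha)$, so
\begin{equation*}
\bigl\langle p'(t),p(t)\bigr\rangle=\alpha\,\norm{a}^2\,t^{2\alpha-1}+O(t^{2\alpha}),
\end{equation*}
with positive real leading coefficient. Similarly, $\tilde f(p(t))$ is a nonzero real analytic function of $t$ vanishing at $t=0$, hence $\tilde f(p(t))=b\,t^\beta(1+o(1))$ for some $b\in\C^*$ and integer $\beta\ge 1$, which gives
\begin{equation*}
\frac{d}{dt}\log\tilde f(p(t))=\frac{\beta}{t}\bigl(1+o(1)\bigr).
\end{equation*}
Substituting into the key identity produces $\overline{\lambda(t)}=\bigl(\beta/(\alpha\norm{a}^2)\bigr)\,t^{-2\alpha}\bigl(1+o(1)\bigr)$, a nonzero complex number whose argument tends to $0$ as $t\to 0^+$. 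Hence $\lambda(t)\neq 0$ for small positive $t$ and $\lim_{t\to 0^+}\lambda(t)/\abs{\lambda(t)}=1$, as required.

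The only delicate point is the opening manoeuvre that removes the projections $\pi_{\alpha_{p(t)}}$; after that the argument is a direct transcription of Milnor's Puiseux-style analysis. I expect no serious obstacle beyond being careful about the conjugation convention in $\langle\cdot,\cdot\rangle$, which only switches the roles of $\lambda(t)$ and $\overline{\lambda(t)}$ and therefore leaves the asymptotic behaviour of $\arg\lambda(t)$ unchanged.
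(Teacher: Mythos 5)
Your proof is correct, and it is a genuine streamlining of the paper's argument rather than a transcription of it. The paper follows Milnor's Lemma 4.4 closely: it expands $p(t)$, $f(p(t))$ and $\grad\tilde f(p(t))$ in powers of $t$, compares the hypothesis componentwise to show that $\lambda(t)$ has a Laurent expansion whose leading coefficient $\lambda_0$ satisfies $\pi_{\alpha_t}(\mathbf{c}_s)=\lambda_0\pi_{\alpha_t}(\mathbf{a}_r)\bar b$, and only then feeds this into the chain-rule identity $\frac{df(p(t))}{dt}=\inpr{\pi_{\alpha_t}(p'(t))}{\pi_{\alpha_t}(\grad\tilde f(p(t)))}$ to conclude $\lambda_0>0$; along the way it must invoke that $f$ is a submersion on $S_\a$ and that $S_\a$ is transverse to the small spheres (to guarantee the projected leading coefficients $\pi_{\alpha_t}(\mathbf{c}_s)$, $\pi_{\alpha_t}(\mathbf{a}_r)$ are non-zero), and it has to live with the awkwardness that $\pi_{\alpha_{p(t)}}$ applied to fixed Taylor coefficients still depends on $t$. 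You isolate the same single new ingredient — $p'(t)\in T_{p(t)}S_\a$, so the projection can be dropped inside $\inpr{p'(t)}{\,\cdot\,}$; this is legitimate here because the strata are complex analytic, so $T_{p(t)}S_\a$ is a complex subspace and the real- and Hermitian-orthogonal projections coincide, a fact the paper also uses implicitly when it pulls $\bar f(p(t))$ out of $\pi_{\alpha_t}$ — but then you pair the hypothesis directly with $p'(t)$, obtaining the scalar identity $\frac{d}{dt}\log f(p(t))=\overline{\lambda(t)}\,\inpr{p'(t)}{p(t)}$ and reading off $\overline{\lambda(t)}=\bigl(\beta/(\alpha\norm{a}^2)\bigr)t^{-2\alpha}(1+o(1))$ from the asymptotics of two scalar analytic functions with positive real leading data. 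This collapses the paper's two-step argument into one line, dispenses with the Laurent expansion of $\lambda$ and with the submersion/sphere-transversality inputs, and even shows as a by-product that for small $t>0$ the multiplier $\lambda(t)$ is uniquely determined and non-zero. The only points worth making explicit in a final write-up are that $p\not\equiv\0$ (so the leading coefficient $a$ exists and is non-zero), which follows from $f(p(t))\neq0$ for $t>0$, and the complex-subspace remark justifying the removal of the projection.
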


\begin{proof}
We follow the proof of \cite[Lemma~4.4]{Milnor:ISH}.
 Consider the Taylor expansions
\begin{align*}
 p(t)&=\mathbf{a}t^k+\mathbf{a}_1t^{k+1}+\mathbf{a}_2t^{k+2}+\dots,\\
f\bigl(p(t)\bigr)&=bt^l+b_1t^{l+1}+b_2t^{l+2}+\dots,\\
\grad \tilde{f}\bigl(p(t)\bigr)&=\mathbf{c}t^m+\mathbf{c}_1t^{m+1}+\mathbf{c}_2t^{m+2}+\cdots,
\end{align*}
where the leading coefficients $\mathbf{a}$, $b$ and $\mathbf{c}$
are non-zero. (The identity
$\frac{df}{dt}=\inpr{\frac{dp}{dt}}{\grad \tilde{f}}$ shows that
$\grad \tilde{f}(p(t))$ cannot be identically zero.) The leading
exponents $k$, $l$ and $m$ are integers with $k\geq1$, $l\geq1$
and $m\geq0$. The series are all convergent say for $\abs{t}<\e'$.
To simplify notation, set $\pi_{\alpha_t}=\pi_{\alpha_{p(t)}}$.
For each $t>0$ we have
\begin{align*}
 \pi_{\alpha_t}\Bigl(\grad\log \tilde{f}\bigl(p(t)\bigr)\Bigr)&=\lambda(t)\pi_{\alpha_t}\bigl(p(t)\bigr),\\
 \pi_{\alpha_t}\Bigl(\grad\tilde{f}\bigl(p(t)\bigr)\Bigr)
&=\lambda(t)\pi_{\alpha_t}\bigl(p(t)\bigr)\bar{f}\bigl(p(t)\bigr),
\end{align*}
The vector $\pi_{\alpha_t}\Bigl(\grad
\tilde{f}\bigl(p(t)\bigr)\Bigr)$ is non-zero since $f$ is a
submersion on $S_\alpha$. Hence there exists a smallest $s$ such
that $\pi_{\alpha_t}(\mathbf{c}_s)\neq0$. On the other hand, the
vector $\pi_{\alpha_t}\bigl(p(t)\bigr)$ is non-zero since
$S_\alpha$ is transverse to all the spheres centred at $\0$ of
radius less than $\e$. Hence, there  exists a smallest $r$ such
that $\pi_{\alpha_t}(\mathbf{a}_r)\neq0$. Therefore
\begin{align*}
 \pi_{\alpha_t}(\mathbf{c}t^m+\mathbf{c}_1t^{m+1}+\cdots)
    &=\lambda(t)\pi_{\alpha_t}(\mathbf{a}t^k+\mathbf{a}_1t^{k+1}+\dots)(\bar{b}t^l+\bar{b}_1t^{l+1}+\dots)\\
\pi_{\alpha_t}(\mathbf{c}_s)t^{m+s}+\dots
    &=\lambda(t)\bigl(\pi_{\alpha_t}(\mathbf{a}_r)t^{k+r}+\dots\bigr)\bigl(\bar{b}t^l+\dots\bigr)\\
\pi_{\alpha_t}(\mathbf{c}_s)t^{m+s}+\dots
    &=\lambda(t)\bigl(\pi_{\alpha_t}(\mathbf{a}_r)\bar{b}t^{k+r+l}+\dots\bigr).
\end{align*}
Comparing corresponding components of these two vector valued
functions, we see that $\lambda(t)$ is a quotient of real analytic
functions, and therefore it has a Laurent expansion of the form
\begin{equation*}
 \lambda(t)=\lambda_0t^{m+s-k-r-l}(1+d_1t+d_2t^2+\dots).
\end{equation*}
Furthermore the leading coefficients must satisfy the equation
\begin{equation*}
\pi_{ \alpha_t}(\mathbf{c}_s)=\lambda_0\pi_{\alpha_t}(\mathbf{a}_r)\bar{b}.
\end{equation*}
Substituting this equation in the power series expansion of the identity
\begin{equation*}
 \frac{df(p(t))}{dt}=\Bigl\langle\frac{dp}{dt},\grad \tilde{f}\bigl(p(t)\bigr)\Bigr\rangle,
\end{equation*}
and noting that since $p(t)\in S_\a$, $t\in[0,\e)$, we have that $\frac{dp}{dt}\in T_{p(t)}S_\a$ and therefore
$\pi_{\alpha_t}(\frac{dp}{dt})=\frac{dp}{dt}$. Thus we obtain
\begin{equation*}
 \frac{df\bigl(p(t)\bigr)}{dt}
=\Bigl\langle\pi_{\alpha_t}\Bigl(\frac{dp}{dt}\Bigr),\pi_{\alpha_t}\Bigl(\grad \tilde{f}\bigl(p(t)\bigr)\Bigr)\Bigr\rangle.
\end{equation*}
Therefore
\begin{align*}
 (l bt^{l-1}+\dots)
&=\Bigl\langle\pi_{\alpha_t}(k\mathbf{a}t^{k-1}+\dots),\pi_{\alpha_t}(\mathbf{c}t^m+\dots)\Bigr\rangle,\\
 &=\Bigl\langle k\pi_{\alpha_t}(\mathbf{a}_r)t^{k+r-1}+\dots, \pi_{\alpha_t}(\mathbf{c}_s)t^{m+s}+\dots\Bigr\rangle\\
 &=\Bigl\langle k\pi_{\alpha_t}(\mathbf{a}_r)t^{k+r-1}+\dots,
\lambda_0\pi_{\alpha_t}(\mathbf{a}_r)\bar{b}t^{m+s}+\dots\Bigr\rangle\\
 &=k\norm{\pi_{\alpha_t}(\mathbf{a}_r)}^2\bar{\lambda_0}bt^{k+r+m+s-1}+\dots.
\end{align*}
Comparing the leading coefficients it follows that
\begin{equation*}
 l=k\norm{\pi_{\alpha_t}(\mathbf{a}_r)}^2\bar{\lambda_0}
\end{equation*}
which proves that $\lambda_0$ is a positive real number. Therefore
\begin{equation*}
 \lim_{t\to0}\arg\lambda(t)=0,
\end{equation*}
which completes the proof of Lemma~\ref{lem:curve.strat}.
\end{proof}

\begin{lemma}\label{prop:arg.lambda}
Let $X$ be an analytic subset of an open neighbourhood  $U$ of the
origin $\0$ in $\C^n$. Let $f\colon (X,\0) \to (\C,0)$ be
holomorphic  and set $V =f^{-1}(0)$. Let $\tilde{f}\colon U\to \C$
be an analytic extension of $f$ to $U$. Let $S_\a$ be a stratum
not contained in $V$, $x\in S_\a$ and let $\pi_{\alpha_x}$ be the
orthogonal projection of $\C^n$ onto $T_xS_\a$. There exists a
number $\e_0>0$ so that, for every $x\in S_\a$ with
$\norm{x}\leq\e_0$, the two vectors $\pi_{\alpha_x}(x)$ and
$\pi_{\alpha_x}(\grad\log \tilde{f}(x))$ are either linearly
independent over the complex numbers or else
\begin{equation*}
 \pi_{\alpha_x}\bigl(\grad\log\tilde{f}(x)\bigr)=\lambda\pi_{\alpha_x}(x)
\end{equation*}
where $\lambda$ is a non-zero complex number whose argument has absolute value less than $\pi/4$.
\end{lemma}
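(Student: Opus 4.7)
The plan is to argue by contradiction via the curve selection lemma, reducing the statement to the pointwise analysis carried out in Lemma~\ref{lem:curve.strat}. This is essentially Milnor's strategy in \cite[Lemma~4.3]{Milnor:ISH} for deducing the ``thickened'' statement from its curve version; the only new feature here is the presence of the varying orthogonal projections $\pi_{\alpha_x}$ onto the tangent spaces of the stratum $S_\a$.

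Suppose the conclusion fails for every $\e_0 > 0$. Consider the set
\begin{equation*}
 Z = \bigl\{\, x \in S_\a \setminus V \,:\, \pi_{\alpha_x}\bigl(\grad \log \tilde f(x)\bigr) = \lambda\,\pi_{\alpha_x}(x) \text{ for some } \lambda \in \C \text{ with } |\arg \lambda| \geq \pi/4 \,\bigr\},
\end{equation*}
where the degenerate case $\lambda = 0$ is absorbed into the bad sector by convention. By hypothesis $\0 \in \bar Z$. I claim $Z$ is semi-analytic: complex linear dependence of the two projected vectors is the vanishing of the $2\times 2$ minors of the $n \times 2$ complex matrix with columns $\pi_{\alpha_x}(\grad\log\tilde f(x))$ and $\pi_{\alpha_x}(x)$, which, upon expressing $\pi_{\alpha_x}$ in a local real analytic orthonormal frame of $TS_\a$ (available because $S_\a$ is a smooth real analytic submanifold), becomes a real analytic equation in $x$; the constraint $|\arg \lambda| \geq \pi/4$ becomes a semi-algebraic inequality in the same real analytic functions. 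The curve selection lemma therefore provides a real analytic path $p\colon[0,\e) \to \bar{Z}\cup\{\0\}$ with $p(0)=\0$ and $p(t) \in Z$ for $t > 0$.

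Along $p$ one has $\tilde f(p(t)) \neq 0$ and, by the stratified submersivity of $f|_{S_\a}$ off $V$, the vector $\pi_{\alpha_{p(t)}}(\grad \tilde f(p(t)))$ never vanishes, so $\lambda(t) \neq 0$ for every $t > 0$. All hypotheses of Lemma~\ref{lem:curve.strat} are thus satisfied, yielding $\lim_{t\to 0^+} \arg\lambda(t) = 0$. This directly contradicts the bound $|\arg\lambda(t)| \geq \pi/4$ enforced by membership in $Z$, and the proof is complete.

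The main obstacle is the semi-analyticity of $Z$, which is obscured by the point-dependence of the orthogonal projection $\pi_{\alpha_x}$. Once one fixes a local real analytic orthonormal frame for $TS_\a$ and rewrites the projections in coordinates relative to that frame, both $\pi_{\alpha_x}(x)$ and $\pi_{\alpha_x}(\grad\log\tilde f(x))$ become genuine real analytic maps on $S_\a \setminus V$, and the conditions defining $Z$ collapse to standard semi-analytic relations to which Milnor's curve selection lemma applies verbatim.
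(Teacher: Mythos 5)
Your proposal is correct and follows essentially the same route as the paper's appendix proof: write the dependence locus by real analytic equations (vanishing of the $2\times 2$ minors of the projected vectors) together with a semianalytic sector inequality, invoke the analytic curve selection lemma of \cite{BV}, and contradict Lemma~\ref{lem:curve.strat} along the resulting path. The only detail the paper makes explicit that you leave implicit is how the sector condition on $\lambda$ becomes analytic: it replaces $\lambda$ by $\lambda'(z)=\bigl\langle\pi_{\alpha_z}\bigl(\grad \tilde{f}(z)\bigr),\bar{f}(z)\,\pi_{\alpha_z}(z)\bigr\rangle$, a positive real multiple of $\lambda$, and expresses $\abs{\arg\lambda}>\pi/4$ via the real analytic inequalities $\Re\bigl((1\pm i)\lambda'(z)\bigr)<0$.
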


\begin{proof}
 The proof is the same as in \cite[Lemma~4.3]{Milnor:ISH} defining
\begin{equation*}
 W=\set{z\in S_\a}{\pi_{\alpha_z}\bigl(\grad\log \tilde{f}(z)\bigr)=\mu\pi_{\alpha_z}(z),\,\mu\in\C},
\end{equation*}
and using the \emph{analytic} curve selection lemma \cite[Prop.~2.2]{BV}.
\end{proof}

As in Milnor's case, we have the following corollary as an immediate consequence.

\begin{corollary}\label{cor:lin.in.R}
 Let $S_\a$ be a stratum not contained in $V$. For every $x\in S_\a$ which is sufficiently close to the origin, the
 two vectors $\pi_{\alpha_x}(x)$ and $\pi_{\alpha_x}\bigl(i\grad\log \tilde{f}(x)\bigr)$ are linearly independent over $\R$.
\end{corollary}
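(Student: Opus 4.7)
The plan is to argue by contradiction, reducing real linear dependence of $\pi_{\alpha_x}(x)$ and $\pi_{\alpha_x}(i\grad \log \tilde{f}(x))$ to a conclusion that is ruled out by Lemma~\ref{prop:arg.lambda}. I will use throughout that the projection $\pi_{\alpha_x}$ onto $T_xS_\a$ is $\C$-linear (since $S_\a$ is a complex submanifold), and that $\pi_{\alpha_x}(x)\neq 0$ for $x$ close enough to $\0$: indeed, by the choice of $\B$ in Section~\ref{section-Conical-structure}, the stratum $S_\a$ meets every small sphere transversally, so the radial vector $x$ is not orthogonal to $T_xS_\a$.

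Write $u=\pi_{\alpha_x}(x)$ and $w=\pi_{\alpha_x}\bigl(\grad\log \tilde{f}(x)\bigr)$. Note also that $w\neq 0$: if $w$ vanished, then $\grad\log \tilde{f}(x)$ would be orthogonal to $T_xS_\a$, contradicting the fact that $f$ restricts to a submersion on $S_\a$ (since $S_\a\not\subset V$). Assume that $u$ and $iw=\pi_{\alpha_x}\bigl(i\grad\log \tilde{f}(x)\bigr)$ are $\R$-linearly dependent. Since $u\neq 0$, there exists a real number $c$ with $iw = c\,u$, which gives $w=-ic\,u$.

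So $w=\lambda u$ with $\lambda=-ic$ purely imaginary (or zero). Now apply Lemma~\ref{prop:arg.lambda}: since $u$ and $w$ are $\C$-linearly dependent, the lemma forces $\lambda$ to be a \emph{non-zero} complex number with $|\arg\lambda|<\pi/4$. But a non-zero purely imaginary number has argument $\pm\pi/2$, and $\lambda=0$ would force $w=0$, which we have already excluded. This contradiction proves the corollary.

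The argument is essentially bookkeeping once the two inputs (the $\C$-linearity of $\pi_{\alpha_x}$ and Lemma~\ref{prop:arg.lambda}) are in place; the only point that requires a moment's care is checking that neither $u$ nor $w$ vanishes, so that the dichotomy ``$\C$-independent vs.\ $\lambda\in\C^\ast$ with $|\arg\lambda|<\pi/4$'' really applies. This is where the choice of sufficiently small $\e_0$ from Lemma~\ref{prop:arg.lambda} and the transversality-to-spheres built into the stratification are used. There is no substantive obstacle beyond this.
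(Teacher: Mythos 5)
Your argument is correct and is exactly the intended deduction: the paper states Corollary~\ref{cor:lin.in.R} as an immediate consequence of Lemma~\ref{prop:arg.lambda} (following Milnor), namely that $\R$-dependence of $\pi_{\alpha_x}(x)$ and $\pi_{\alpha_x}\bigl(i\grad\log\tilde{f}(x)\bigr)$ would make $\pi_{\alpha_x}\bigl(\grad\log\tilde{f}(x)\bigr)$ a purely imaginary multiple of $\pi_{\alpha_x}(x)$, contradicting $\abs{\arg\lambda}<\pi/4$. Your extra checks that $\pi_{\alpha_x}(x)\neq 0$ (transversality to small spheres) and $\pi_{\alpha_x}\bigl(\grad\log\tilde{f}(x)\bigr)\neq 0$ (submersivity on strata off $V$), together with the $\C$-linearity of $\pi_{\alpha_x}$, are precisely the implicit ingredients, so nothing is missing.
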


The vector $\pi_{\alpha_x}(x)$ is the normal vector in $S_\a$ of
the codimension 1 submanifold $\s_{\norm{x}}\cap S_\a$ and the
vector $\pi_{\alpha_x}\bigl(i\grad\log \tilde{f}(x)\bigr)$ is the
normal vector in $S_\a$ of the codimension 1 submanifold $S_\a\cap
X_\t$. Therefore Corollary~\ref{cor:lin.in.R} gives another proof
of the second statement of Theorem~\ref{Thm:Can.Dec}-{\bf\ref{it:stratification})}, 
that the intersection $S_\a \cap X_\t$ meets transversally every sphere in $\B_\e$ centred at $\0$.

\begin{proposition}\label{lem:vec.fld.sph}
There exists a complete, stratified, rugose, vector field $w$ on $(X\cap\B_\e)\setminus
V$, tangent to all the spheres in $\B_\e$ centred at $\0$, and whose
orbits are transverse to the $X_{\t} \setminus V$ and permute
them: for each fixed time $t$, the flow carries each $X_{\t}
\setminus V$ into $X_{\t + t} \setminus V$, where the angle $\t +
t$ must be taken modulo $\pi$. In particular, for $t = \pi$ the
flow interchanges the two halves of
 $X_{\t} \setminus V$.
\end{proposition}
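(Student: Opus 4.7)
The plan is to construct $w$ as a rugose stratified lift of the angular vector field on $\C\setminus\{0\}$ through the spherefication map $\mathfrak{F}$ introduced above. By Lemma~\ref{lem:g.sub}, $\mathfrak{F}\colon(X\cap\B_\e)\setminus V\to\C\setminus\{0\}$ is a submersion on each stratum, so Verdier's Proposition~\ref{rugose lifting} produces a rugose stratified vector field $w$ on $(X\cap\B_\e)\setminus V$ satisfying $d\mathfrak{F}(w(x))=i\mathfrak{F}(x)$ for every $x$. I would choose the horizontal subbundle (in the sense of that proposition) to be the real-orthogonal complement of the kernel of $d\mathfrak{F}|_{S_\alpha}$, so that $w$ is the minimum-norm lift.

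The geometric conclusions follow directly from the lifting identity. Integrating $\tfrac{d}{dt}\mathfrak{F}(y(t))=i\mathfrak{F}(y(t))$ along an orbit $y(t)$ gives $\mathfrak{F}(y(t))=e^{it}\mathfrak{F}(y(0))$. Taking moduli, $\norm{y(t)}=\abs{\mathfrak{F}(y(t))}=\norm{y(0)}$, so the orbit stays on a single sphere about $\0$ and $w$ is tangent to every such sphere. Taking arguments, $\arg\mathfrak{F}(y(t))=\arg f(y(0))+t$ is strictly monotone in $t$; since a point belongs to $X_\t$ precisely when $\arg\mathfrak{F}\equiv\t$ modulo $\pi$, the orbit crosses each $X_\t\setminus V$ transversally, and the time-$t$ map sends $X_\t\setminus V$ onto $X_{\t+t}\setminus V$ (angle reduced mod $\pi$). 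At $t=\pi$ this interchanges the two halves $E_\t$ and $E_{\t+\pi}$ of Remark~\ref{rem:gluing.links}.

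The main obstacle is completeness. Each orbit is trapped in the compact set $X\cap\s_{\norm{y(0)}}$, so the only way the flow could fail to extend for all real time is for $y(t)$ to approach $V$ as $t$ tends to some finite value. I would rule this out using a Milnor-type estimate built from Corollary~\ref{cor:lin.in.R} and Lemma~\ref{prop:arg.lambda}. Unwinding the lifting condition, $w(x)$ must lie in the real two-plane $\R\,\pi_{\alpha_x}(x)+\R\,\pi_{\alpha_x}(i\grad\log\tilde{f}(x))$ and satisfy $\Re\langle w,\pi_{\alpha_x}(x)\rangle=0$ together with $\Re\langle w,\pi_{\alpha_x}(i\grad\log\tilde{f}(x))\rangle=1$. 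The uniform angle bound of Lemma~\ref{prop:arg.lambda} then forces $\abs{w(x)}\le C/\abs{\pi_{\alpha_x}(\grad\log\tilde{f}(x))}$ on $\B_{\e_0}$, and hence $\abs{\Re\langle w(x),\grad\log\tilde{f}(x)\rangle}\le C'$ uniformly. Since $\tfrac{d}{dt}\log\abs{f(y(t))}=\Re\langle w(y(t)),\grad\log\tilde{f}(y(t))\rangle$, $\log\abs{f}$ varies Lipschitz in $t$ along an orbit, so $\abs{f}$ cannot reach $0$ in finite time, and the flow is complete.
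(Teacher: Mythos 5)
Your overall strategy is the same as the paper's: lift the angular field $\bar w(z)=iz$ through the spherefication $\mathfrak{F}$ using Verdier's Proposition~\ref{rugose lifting}, read off tangency to spheres and the permutation of the pencil from $d\mathfrak{F}(w)=i\mathfrak{F}$, and reduce completeness to a bound on $\frac{d}{dt}\log\abs{f(p(t))}=\Re\inpr{w}{\grad\log\tilde f}$ along orbits. The gap is in the one step where you deviate from the paper: the claim that, for the \emph{minimum-norm} (orthogonal) lift, ``the uniform angle bound of Lemma~\ref{prop:arg.lambda} forces $\abs{w(x)}\le C/\abs{\pi_{\alpha_x}(\grad\log\tilde f(x))}$, hence $\abs{\Re\inpr{w}{\grad\log\tilde f}}\le C'$.'' Write $a=\pi_{\alpha_x}(x)$, $G=\pi_{\alpha_x}(\grad\log\tilde f(x))$ and $\inpr{a}{G}=c\,\norm{a}\norm{G}e^{i\phi}$ with $0\le c\le 1$. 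Your lift is the unique vector in $\spn_\R(a,iG)$ with $\Re\inpr{w}{a}=0$ and $\Re\inpr{w}{iG}=1$, and a direct computation gives
\begin{equation*}
\Re\inpr{w(x)}{\grad\log\tilde f(x)}=-\frac{c^2\sin\phi\cos\phi}{1-c^2\sin^2\phi}\,,
\end{equation*}
which blows up when $c\to 1$ and $\phi\to\pm\pi/2$ simultaneously. Lemma~\ref{prop:arg.lambda} only constrains the \emph{exactly} degenerate points ($c=1$, where it forces $\abs{\phi}<\pi/4$); it says nothing about nearby configurations with $c$ close to $1$ and $\phi$ close to $\pi/2$, and no compactness argument is available because the bound is needed precisely as $x$ approaches $V$ (or lower-dimensional strata). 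So the uniform estimate you need for completeness is not a consequence of the cited lemma, and without it the orbit could a priori reach $V$ in finite time.

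This is exactly why the paper (following Milnor's Lemma 4.6) does not take the orthogonal lift but chooses the horizontal subbundle case by case: where $a$ and $G$ are linearly independent over $\C$, the horizontal plane is spanned by $\pi_{\alpha_x}(x)$ and a vector $\nu(x)$ tangent to the Milnor tube, so the lift satisfies $\Re\inpr{w}{\grad\log\tilde f}=0$ there; only in the proportional case $G=\lambda a$ does one take $w$ along $\pi_{\alpha_x}(ix)$ and invoke the $\pi/4$ bound of Lemma~\ref{prop:arg.lambda}. The two local constructions are then glued by a rugose partition of unity, which preserves the convex conditions $\Re\inpr{w}{i\grad\log\tilde f}=1$ and $\abs{\arg\inpr{w}{i\grad\log\tilde f}}<\pi/4$, yielding the pointwise bound $\abs{\Re\inpr{w}{\grad\log\tilde f}}<1$ that makes the flow complete. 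To repair your argument you would either have to adopt this choice of horizontal subbundle, or prove a strengthened, ``open-neighbourhood'' version of Lemma~\ref{prop:arg.lambda} (e.g.\ via a curve-selection argument as in Lemma~\ref{lem:curve.strat}) bounding $\frac{c^2\abs{\sin\phi\cos\phi}}{1-c^2\sin^2\phi}$ uniformly near $\0$ — which is not in the paper and is not obviously true as stated.
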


\begin{proof}
Let $\e_0>0$ as in Lemma~\ref{it:St.trans.Sp}.
By Lemma~\ref{lem:g.sub}, the map
\begin{equation*}
\mathfrak{F} \colon(X\cap \B_\e)\setminus V\to \C\setminus\{0\}
\end{equation*}
is a submersion on each stratum $S_{\a}\subset\B_{\e_0}$.

Consider the vector field $\bar{w}$ on $\C\setminus\{0\}$ given by
$\bar{w}(z)=iz$, which is tangent to all the circles $\s^1_\eta\subset\C$.
By Proposition~\ref{rugose lifting} we can lift $\bar{w}$ using $\mathfrak{F}$, to a
stratified rugose vector field $w$ on $(X \cap \B_{\e_0})\setminus V$.
By \cite[Prop.~(4.8)]{Ver} this vector field is integrable and since
$d\mathfrak{F}_x(w(x))=\bar{w}(\mathfrak{F}(x))$, the integral curve $p(t)$ of $w$
is sent by $\mathfrak{F}$ to the circle $\s^1_{\norm{x}}\subset\C$ of radius $\norm{x}$. Thus,
$p(t)$ is transverse to $X_\t$ with $\t=\arg\mathfrak{F}(x)$. On the other hand, by the
definition of $\mathfrak{F}$, we have that $p(t)$ lies in the sphere $\s_{\norm{x}}\subset\C^n$ and therefore $w$
is tangent to all the spheres in $\B_{\e_0}$.
The solution $p(t)$ certainly exists locally and can be extended over some maximal open
interval of $\R$.

Since $(X\cap\B_\e)\setminus V$ is not compact, we have to guarantee that $p(t)$ cannot
tend to $V$ as $t$ tends to some finite limit $t_0$, that is, that $w$ is a complete vector field.
The lifting $w$ of the vector field $\bar{w}$ is not unique and it depends on the choice of a
horizontal subbundle $H$ of the rugose vector field $F$ tangent to the stratification $\{S_\alpha\}$
which induces the tangent bundle $TS_\a$ on a stratum $S_\a\cap V$(see Remark~\ref{rmk:horizontal}).
Hence we have to choose an appropriate horizontal subbundle $H$ to ensure that the lifting $w$
is complete. We do this as follows.

\medskip
Let $\tilde{f}\colon U\to \C$ be an analytic extension of $f$ to
an open neighbourhood $U$ of $\0$ in $\C^n$. Let
$\tilde{V}=\tilde{f}^{-1}(0)$, $\tilde{h}_\t=\pi_\t\circ\tilde{f}$
and define $\tilde{X}_\t=\tilde{h}^{-1}_\t(0)$.
Following \cite[\S4]{Mi2}, since $\tilde{f}=\abs{\tilde{f}(x)}e^{i\theta(x)}$,
we have that $\log \tilde{f}(x)=\log\abs{\tilde{f}(x)}+i\theta(x)$, thus
$\theta(x)=\Re(-i\log\tilde{f}(x))$. Differentiating along a curve $x=p(t)$ we have that
\begin{equation}\label{eq:d.theta}
 \frac{d\theta(p(t))}{dt}=\Re\Bigl\langle\frac{dp}{dt},i\grad\log \tilde{f}(x)\Bigr\rangle.
\end{equation}
Therefore, the vector $i\grad\log \tilde{f}(x)$ is the \emph{real} gradient of the real
analytic function $\theta$. Since the $\tilde{X}_\t$'s are the fibres of $\theta$ the vector
$i\grad\log \tilde{f}(x)$ is \emph{normal} to $\tilde{X}_{\t(x)}$.

On the other hand, $\log\abs{\tilde{f}(x)}=\Re(\log \tilde{f}(x))$ and differentiating along a curve $x=p(t)$ we have that
\begin{equation}
 \frac{d\log\abs{\tilde{f}(x)}}{dt}=\Re\Bigl\langle\frac{dp}{dt},\grad\log \tilde{f}(x)\Bigr\rangle.
\end{equation}
Hence the vector $\grad\log \tilde{f}(x)$ is the \emph{real} gradient of the real
analytic function $\log\abs{\tilde{f}(x)}$. Since the Milnor tubes are the fibres of $\log\abs{\tilde{f}(x)}$,
the vector $\grad\log \tilde{f}(x)$ is \emph{normal} to the Milnor tube $N(\e,\abs{\tilde{f}(x)})$.

We have that $X_\t=X\cap\tilde{X}_\t$. Let $S_\a$ be a stratum of $X$ not
contained in $V$; notice that $S_\a\cap X_\t = S_\a \cap\tilde{X}_\t$.
Consider $x\in S_\a\cap X_\t$ and let $\s_{\norm{x}}$ be the sphere of
radius $\norm{x}$ centred at $\0$.
By the definition of $\mathfrak{F}$ we have
that the fibre of $\mathfrak{F}|_{S_\alpha}$ which contains $x$ is given by
\begin{equation}\label{eq:fib.esph}
\mathfrak{F}|_{S_\alpha}^{-1}(\mathfrak{F}(x))=S_\alpha\cap \s_{\norm{x}}\cap X_\t.
\end{equation}
We denote this fibre by $\mathfrak{F}_x^\a$. To simplify notation
set $\s_{\norm{x}}^\a=S_\alpha\cap\s_{\norm{x}}$,
$X_\t^\a=S_\alpha\cap X_\t$ and $N_x^\a=S_\a\cap
N(\e,\abs{\tilde{f}(x)})$. Let $\pi_{\alpha_x}$ be the orthogonal
projection of $\C^n$ onto $T_xS_\alpha$, then the vector
$\pi_{\alpha_x}(i\grad\log \tilde{f}(x))$ is \emph{normal} to
$X_\t^\a$, the vector $\pi_{\alpha_x}(\grad\log \tilde{f}(x))$ is
\emph{normal} to the Milnor tube $N_x^\a$, and the vector
$\pi_{\alpha_x}(x)$ is \emph{normal} to $\s_{\norm{x}}^\a$. Denote
by $\spn_\R\bigl(\pi_{\alpha_x}(x),\pi_{\alpha_x}(i\grad\log
\tilde{f}(x))\bigr)$ the \emph{real} plane spanned by the vectors
$\pi_{\alpha_x}(x)$ and $\pi_{\alpha_x}(i\grad\log \tilde{f}(x))$.
By \eqref{eq:fib.esph} we have that
\begin{equation*}
 (T_x\mathfrak{F}_x^\a)^\perp=\spn_\R\bigl(\pi_{\alpha_x}(x),\pi_{\alpha_x}(i\grad\log
 \tilde{f}(x))\bigr),
\end{equation*}
the orthogonal space is taken in the tangent space to the stratum.

As in \cite[Lem.~4.6]{Mi2} we have to consider two cases:
\vspace*{12pt}

\noindent\textbf{Case 1:} The vectors $\pi_{\alpha_x}(x)$ and $\pi_{\alpha_x}(\grad\log \tilde{f}(x))$
are linearly independent over $\C$.
\vspace*{12pt}

We have that $\s_{\norm{x}}^\a$ and $N_x^\a$ are codimension 1 submanifolds of $S_\a$ and
the vectors $\pi_{\alpha_x}(x)$ and $\pi_{\alpha_x}(\grad\log \tilde{f}(x))$ are respectively normal to them.
Since these vectors are linearly independent over $\R$, we have that $\s_{\norm{x}}^\a$ and $N_x^\a$ are transverse.
Let $M_x^\a$ be their intersection, which is a submanifold of $\s_{\norm{x}}^\a$ of codimension 1. The fibre
$\mathfrak{F}_x^\a$ is also a codimension 1 submanifold of $\s_{\norm{x}}^\a$.

\medskip

\noindent \textbf{Claim:} The manifolds $M_x^\a$ and $\mathfrak{F}_x^\a$ are transverse in $\s_{\norm{x}}^\a$.
\begin{proof}[Proof of Claim]
Suppose they are not transverse, then $T_x M_x^\a=T_x\mathfrak{F}_x^\a$, hence
$(T_x M_x^\a)^\perp=(T_x\mathfrak{F}_x^\a)^\perp$. But
\begin{align*}
 (T_x M_x^\a)^\perp&=\spn_\R\bigl(\pi_{\alpha_x}(x),\pi_{\alpha_x}(\grad\log \tilde{f}(x))\bigr),\\
(T_x\mathfrak{F}_x^\a)^\perp&=\spn_\R\bigl(\pi_{\alpha_x}(x),\pi_{\alpha_x}(i\grad\log
\tilde{f}(x))\bigr),
\end{align*}
here all the orthogonal spaces are considered in  the tangent
space to the stratum $T_xS_\a$. Therefore the vector
$\pi_{\alpha_x}(x)$ is in the \emph{real} plane generated by
$\pi_{\alpha_x}(\grad\log \tilde{f}(x))$ and
$\pi_{\alpha_x}(i\grad\log \tilde{f}(x))$, this implies that
$\pi_{\alpha_x}(x)$ is in the \emph{complex} line generated by
$\pi_{\alpha_x}(\grad\log \tilde{f}(x))$, which contradicts our
original assumption.\hfill$\blacksquare$
\renewcommand{\qed}{}
\end{proof}

Since $M_x^\a$ and $\mathfrak{F}_x^\a$ are transverse in
$\s_{\norm{x}}^\a$ there is a \emph{unique} direction in
$T_xM_x^\a$ which is not in $T_x\mathfrak{F}_x^\a$ and is
orthogonal to $T_x(M_x^\a\cap\mathfrak{F}_x^\a)$ . Let $\nu(x)$ be
a unit vector in this direction, then the \emph{real} plane
$H_x=\spn_\R\bigl(\nu(x),\pi_{\alpha_x}(x)\bigr)$ is complementary
to $T_x\mathfrak{F}_x^\a$ in $T_xS_\a$, that is
\begin{equation*}
 T_xS_\a=T_x\mathfrak{F}_x^\a\oplus H_x.
\end{equation*}

Let $W$ be a neighbourhood of $x$ where there is a rugose vector
bundle $F$ on $W\cap X$ tangent to the stratification $\{S_\a\cap
W\}$ which induces the tangent bundle $TS_\a$ on the stratum
$S_\a\cap W$ \cite[Cor.~(4.5)]{Ver}. Consider the rugose vector
bundle morphism $T\mathfrak{F}\colon F\to \mathfrak{F}^*T\C$
induced by $\mathfrak{F}$ and let $K$ be its kernel. For a point
$x\in S_\a$ we have that $F_x=T_xS_\a$ and the fibre
$K_x=T_x\mathfrak{F}_x^\a$.

Let $x'\in W$ and let $\pi_{F_{x'}}$ be the orthogonal projection
of $\C^n$ onto the fibre $F_{x'}$ of $F$ at $x'$. Since the
vectors $\pi_{\alpha_x}(x)$ and $\pi_{\alpha_x}(\grad\log
\tilde{f}(x))$ are linearly independent over $\C$, the vectors
$x'$ and $\grad\log \tilde{f}(x')$ are also linearly independent
over $\C$ for every $x'$ in a neighbourhood of $x$ which we can
assume is also $W$. Then, by the previous arguments, there exists
a unit vector $\tilde{\nu}(x')$ tangent to the intersection
$M_{x'}$ of the sphere   $\s_{\norm{x'}}$ and the Milnor tube
$N(\e,\abs{\tilde{f}(x')})$ and orthogonal to $M_{x'}\cap
\mathfrak{F}_{x'}$ where $\mathfrak{F}_{x'}$ is the fibre of
$\mathfrak{F}$ which contains $x'$. Define
$H_{x'}=\pi_{F_{x'}}\Bigl(\spn_\R\bigl(\tilde{\nu}(x'),x'\bigr)\Bigr)$.
Notice that for $x\in S_\alpha$ we have that
$\nu(x)=\pi_{\alpha_x}(\tilde{\nu}(x))$ and
$\pi_{F_x}=\pi_{\alpha_x}$, so both definitions of $H_x$ coincide.
Since $H_x$ is complementary to $K_x$ in $F_x$, $H_{x'}$ is also
complementary to $K_{x'}$ in $F_{x'}$ for every $x'$ in a
neighbourhood of $x$ which we can also assume is $W$. Hence $H$ is
a horizontal subbundle of $F$ and it gives a rugose stratified
lifting $w$ of $\bar{w}$ in $W$.

Let $p(t)$ be an integral curve of the vector field $w$.
Since $d\mathfrak{F}_x(w(x))=\bar{w}(\mathfrak{F}(x))$, we have that
\begin{equation*}
 \theta(p(t))=t+\text{constant},
\end{equation*}
and by \eqref{eq:d.theta},
\begin{align*}
  1=\frac{d\theta(p(t))}{dt}&=\Re\inpr{w(p(t))}{i\grad\log \tilde{f}(p(t))}\\
&=\Re\inpr{w(p(t))}{\pi_{F_{p(t)}}(i\grad\log \tilde{f}(p(t)))}.
 \end{align*}
In particular we have that
\begin{equation}\label{eq:real.part}
\Re\inpr{w(x)}{i\grad\log \tilde{f}(x)}=\Re\inpr{w(x)}{\pi_{F_x}(i\grad\log \tilde{f}(x))}=1.
\end{equation}
Notice that by definition, $w(x)\in T_x\s_{\norm{x}}^\a\cap H_x$ and therefore it is a \emph{real} multiple of
the vector $\nu(x)$.
Since $\nu(x)$ is tangent to the Milnor tube $M_x^\a$ and $\pi_{\alpha_x}(\grad\log \tilde{f}(x))$
is normal to $M_x^\a$, we have that
\begin{equation}\label{eq:imaginary.part}
\Re\inpr{w(x)}{\grad\log \tilde{f}(x)}=\Re\inpr{w(x)}{\pi_{F_x}(\grad\log \tilde{f}(x))}=0.
\end{equation}
Combining \eqref{eq:real.part} and \eqref{eq:imaginary.part} we have that
\begin{equation*}
 \inpr{w(x)}{i\grad\log \tilde{f}(x)}=1,
\end{equation*}
which implies that
\begin{equation}
 \abs{\arg \inpr{w(x)}{i\grad\log \tilde{f}(x)}}<\frac{\pi}{4}.
\end{equation}
This condition certainly holds throughout a neighbourhood of $x$.

\vspace*{12pt}
\noindent\textbf{Case 2:} The vector $\pi_{\alpha_x}(\grad\log \tilde{f}(x))$ is equal to a multiple $\lambda\pi_{\alpha_x}(x)$.
\vspace*{12pt}

In this case, the \emph{complex} line generated by
$\pi_{\alpha_x}(\grad\log \tilde{f}(x))$ is the \emph{real} plane
$H_x=\spn_\R\bigl(\pi_{\alpha_x}(x),\pi_{\alpha_x}(i\grad\log
\tilde{f}(x))\bigr)=\spn_\R\bigl(\pi_{\alpha_x}(ix),\pi_{\alpha_x}(x)\bigr)$.
Since $H_x$ is the orthogonal space in $T_xS_\a$ to
$T\mathfrak{F}_x^\a$, $H_x$ is complementary to
$T\mathfrak{F}_x^\a$ in $T_xS_\a$.

As before, let $W$ be a neighbourhood of $x$ where there is a
rugose vector bundle $F$ on $W\cap X$ tangent to the
stratification $\{S_\a\cap W\}$ which induces the tangent bundle
$TS_\a$ on the stratum $S_\a\cap W$. Consider the rugose vector
bundle morphism $T\mathfrak{F}\colon F\to \mathfrak{F}^*T\C$
induced by $\mathfrak{F}$ and let $K$ be its kernel. For a point
$x\in S_\a$ we have that $F_x=T_xS_\a$ and the fibre
$K_x=T_x\mathfrak{F}_x^\a$.

Let $x'\in W$ and let $\pi_{F_{x'}}$ be the orthogonal projection of $\C^n$ onto the fibre $F_{x'}$ of $F$ at $x'$.
Define $H_{x'}=\pi_{F_{x'}}\bigl(\spn_\R(ix',x')\bigr)$.
For $x\in S_\alpha$ we have that $\pi_{F_x}=\pi_{\alpha_x}$, so both definitions of $H_x$ coincide.
Since $H_x$ is complementary to $K_x$ in $F_x$, $H_{x'}$ is also complementary to $K_{x'}$
in $F_{x'}$ for every $x'$ in a neighbourhood of $x$ which we can also assume is $W$. Hence $H$ is a horizontal
subbundle of $F$ and it gives a rugose stratified lifting $w$ of $\bar{w}$ in $W$.

Notice that by definition when $x \in S_\a$, $w(x)\in
T_x\s_{\norm{x}}^\a\cap H_x$, so there exists $k\in \R$ such that
$w(x) = k \pi_\a(ix)$. Therefore we have:
\begin{align*}
 \inpr{w(x)}{i\grad\log \tilde{f}(x)}&=\inpr{w(x)}{\pi_{\alpha_x}(i\grad\log \tilde{f}(x))}\\
&=k \inpr{\pi_{\alpha_x}(ix)}{\pi_{\alpha_x}(i\grad\log \tilde{f}(x))}\\
&=k \inpr{\pi_{\alpha_x}(ix)}{\lambda\pi_{\alpha_x}(ix)}\\
&=k \bar{\lambda}\norm{\pi_{\alpha_x}(x)}^2.
\end{align*}
Since $d\mathfrak{F}_x(w(x))=\bar{w}(\mathfrak{F}(x))$, we have that
\begin{equation}\label{eq:imagin.c2}
 \Re\inpr{w(x)}{i\grad\log \tilde{f}(x)}=\Re\inpr{w(x)}{\pi_{\a_x}(i\grad\log \tilde{f}(x))}=1,
\end{equation}
and by Lemma~\ref{prop:arg.lambda} and since $k$ is real
\begin{equation*}
 \abs{\arg \inpr{w(x)}{i\grad\log \tilde{f}(x)}}<\frac{\pi}{4}.
\end{equation*}
Again, this condition holds throughout a neighbourhood of $x$ and using a rugose partition of unity
we obtain a global vector field $w$ which lifts $\bar{w}$ and satisfies the following properties:
\begin{align*}
 \Re\inpr{w(x)}{i\grad\log \tilde{f}(x)}&=1,\\
\abs{\arg \inpr{w(x)}{i\grad\log \tilde{f}(x)}}&<\frac{\pi}{4}.
\end{align*}
Therefore
\begin{equation*}
 \abs{\Re\inpr{w(x)}{\grad\log \tilde{f}(x)}}=\abs{\Im\inpr{w(x)}{i\grad\log \tilde{f}(x)}}<1 \,.
\end{equation*}

To guarantee that $p(t)$ cannot tend to $V$ as $t$ tends to some finite limit $t_0$ is equivalent
to insure that $f(p(t))$ cannot tend to zero, or that $\log\abs{f(p(t))}$ cannot tend to $-\infty$,
as $t\to t_0$. But we have that
\begin{equation*}
\Biggl|\frac{d\log \abs{f(p(t))}}{dt}\Biggr|=\Biggl|\frac{d\Re\log f(p(t))}{dt}\Biggr|=
\abs{\Re\inpr{w(p(t))}{\grad\log \tilde{f}(p(t))}}<1 \,.
\end{equation*}
Hence $\log\abs{f(p(t))}<t+\text{constant}$ and therefore $\abs{f(p(t))}$ is bounded away from zero
as $t$ tends to any finite limit.

Taking $\e_0>\e>0$ we
have that $w$ satisfies the lemma on $(X\cap\B_\e)\setminus V$.
\end{proof}

We notice that Proposition \ref{lem:vec.fld.sph} essentially
proves that the map $$\Psi: (X \cap \B_\e) \setminus V  \to
\RP^1$$ in Theorem \ref{Thm:fib.thm}, is the projection map of a
fibre bundle (see Section \ref{Section-Proof of Theorem} below).

\medskip

\begin{proof}[Proof of Theorem~\ref{thm.spherefication}] That the restriction of $\mathfrak{F}$
to each sphere around $\0$ of radius $\e' \le \e$ is a fibre
bundle over the corresponding circle of radius $\e'$, is an
immediate consequence of Proposition \ref{lem:vec.fld.sph}. The
composition of this restriction with the radial projection of
$\s^1_{\e'}$ onto $\s^1$ is the Milnor fibration $\phi$ in
\eqref{M-fib}. To complete the proof we use the uniform conical
structure given in Theorem~\ref{Thm:Can.Dec}-{\bf\ref{it:cones})}
which gives the local triviality of $\mathfrak{F}$ over
$\C\setminus\{0\}$.
\end{proof}

\subsection{Proof of Theorem~\ref{Thm:fib.thm}}\label{Section-Proof of Theorem}

 Now consider the maps of Theorem~\ref{Thm:fib.thm}: $\Psi(x)=
(Re(f(x)) : Im(f(x))),$ and  $ \Phi(x)= \frac{f(x)}{\abs{f(x)}}.$
Notice that $\Phi$ is a lifting of $\Psi$ to the double cover
$\s^1$ of $\RP^1$, so we have the following commutative diagram
\begin{equation*}
\xymatrix{
(X\cap\B_\e)\setminus V \ar[r]^-{\Phi}\ar[rd]_{\Psi}&\s^1 \ar[d]\\
 &\RP^1
}
\end{equation*}
From Remark~\ref{rem:gluing.links}, for each $ \mathcal{L}_\t\in\RP^1$ and each $\t \in [0,2\pi)$ one
has,
\begin{equation*}
\Psi^{-1}(\mathcal{L}_\t) = E_\t\cup E_{\t+\pi} \quad \hbox{and}
\quad \Phi^{-1}(e^{i\t}) = E_\t \;.
\end{equation*}

The vector field $w$ constructed in
Proposition~\ref{lem:vec.fld.sph} provides topological
trivialisations around the fibres of the maps $\Psi$ and $\Phi$,
showing that both are fibre bundles, which proves the first
statement of Theorem~\ref{Thm:fib.thm}. That the restriction of
$\Phi$ to the link of $X$ is the classical Milnor fibration is
immediate from Theorem~\ref{thm.spherefication}.

\begin{remark}
 If $X\setminus V$ is non-singular the flow obtained in Proposition~\ref{lem:vec.fld.sph} can be made $C^\infty$,
thus all the $E_\t$ are diffeomorphic.
\end{remark}

On the other hand, let $\D_\eta$ be a disc in $\C$ of radius
$\eta$ where $\e
>> \eta
>0$ and consider the Milnor tube $N(\e,\eta) = X \cap\B_\e  \cap
f^{-1}(\partial \D_\eta)$. Since the restriction of $f$ to $(X\cap
\B_\e)\setminus V$ is a submersion on each stratum, we have that
$\set{N(\e,\eta)\cap S_\a}{S_\a\subset X\setminus V}$ is a
Whitney-strong stratification of $N(\e,\eta)$
\cite[Rem.~(3.7)]{Ver}. Since the stratification $\{S_\a\}$ of $X$
satisfies Thom's $(a_f)$ condition, if $\eta$ is small enough,
then all the fibres of $f$ in $N(\e,\eta)$ are transverse to
$\s_\e$. Using Thom-Mather first Isotopy lemma and the
transversality of the fibres with the boundary as in \cite[\S
1]{Le1}, we obtain that the restriction of $\Phi$ to $N(\e,\eta)$,
\begin{equation*}
\bar{\Phi}\colon N(\e,\eta)\to \s^1  \, ,
\end{equation*}
is also a fibre bundle. This map equals the restriction of $f$ to
$N(\e,\eta)$ followed by the radial projection of
$\partial\D_\eta$ onto $\s^1$, so this is the Milnor-L\^e
fibration \eqref{ML-fib} up to multiplication by a constant.

\medskip It remains to prove that the two fibrations \eqref{M-fib} and \eqref{ML-fib}
are equivalent. We need the following, which is a  consequence of
  Lemma~\ref{lem:g.sub}.

\begin{proposition}\label{lem:uni.con.str.0}
Let $\{S_\alpha\}$ be a Whitney stratification of $X$ adapted to
$V$  and for each $\t \in [0,\pi)$ equip  $X_\t$ with
the stratification $\{X_\t\cap S_\a\}$  obtained by
intersecting $X_\t$ with the strata of $\{S_\alpha\}$. Then for every
sufficiently small ball $\B_\e$ around $\0$,
 there exists  a stratified, rugose vector field $\tilde{v}$ on
 $(X\cap\B_\e) \setminus V$ which has the following properties:
\begin{enumerate}[i)]\setlength{\itemsep}{0pt}
\item
It is radial, {\it i.e.}, it is transverse to the
intersection of $X$ with all spheres in $\B_\e$ centred at
$\0$.\label{it:pr1.l}

\item It is tangent to the strata of each $X_\t
\setminus V$\label{it:pr2.l}.

\item
It is transverse to all the tubes $f^{-1}(\partial
\D_\eta)$.\label{it:pr3.l}
\end{enumerate}
\end{proposition}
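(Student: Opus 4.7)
The plan is to adapt the construction of Proposition~\ref{lem:uni.con.str}, taking advantage of the fact that $\tilde v$ is only required on $(X \cap \B_\e) \setminus V$. Since $\tilde v$ need not extend across $V$, the modified radial field $\tilde v_{rad}$ that was needed there can be dispensed with; instead, I would simply take the lift $v_{\mathfrak{F}}$ of the Euler field $u(z) = z$ on $\C \setminus \{0\}$ via the spherefication $\mathfrak{F}$, using Lemma~\ref{lem:g.sub} and Proposition~\ref{rugose lifting}. Properties i) and ii) should then come essentially for free: along an integral curve $p(t)$, the condition $d\mathfrak{F}_x(\tilde v(x)) = \mathfrak{F}(x)$ forces $\mathfrak{F}(p(t)) = e^{t}\,\mathfrak{F}(p(0))$, so $\norm{p(t)} = \abs{\mathfrak{F}(p(t))} = e^{t}\norm{p(0)}$, which gives i), while $\arg f(p(t)) = \arg \mathfrak{F}(p(t))$ is constant in $t$, which gives ii).

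The real work goes into property iii). Here the non-uniqueness of the lift (Remark~\ref{rmk:horizontal}) is essential: different horizontal complements $H$ of $\ker d\mathfrak{F}$ in the rugose bundle $F$ produce different lifts, and one must choose $H$ so that $\tilde v(x)$ has non-zero component along $\pi_{\a_x}(\grad\log \tilde f(x))$, the direction normal inside the stratum to the Milnor tube $N_x^\a$. Corollary~\ref{cor:lin.in.R} together with Lemma~\ref{prop:arg.lambda} provides, at every $x \in S_\a \setminus V$, a distinguished unit vector $\nu(x) \in T_x S_\a$ that is simultaneously transverse to $X_\t \cap \s_{\norm{x}}^\a$ inside the sphere and transverse to $N_x^\a$; the Case~1 / Case~2 dichotomy from the proof of Proposition~\ref{lem:vec.fld.sph} can be copied verbatim, and one takes $H_x = \spn_\R\bigl(\nu(x),\pi_{\a_x}(x)\bigr)$ as the horizontal complement at $x$. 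This choice manifestly has non-zero radial component (property i) and non-zero component along $\pi_{\a_x}(\grad\log \tilde f(x))$, which together with the pointwise transversality of $\s_{\norm{x}}^\a$ and $N_x^\a$ forces $d\log\abs{f(p(t))}/dt$ to be non-zero along the flow, i.e.\ transversality to every Milnor tube.

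The main obstacle will be verifying that this pointwise prescription of $H$ is realized by a \emph{rugose} subbundle of $F$, so that the resulting lift is rugose and hence integrable by \cite[Prop.~(4.8)]{Ver}. This is precisely where Thom's $(w_f)$ condition (available through Remark~\ref{rmk:all.reg.strat}) enters, exactly as it did in Proposition~\ref{lem:uni.con.str}: it controls the angles between $T_{y'}f^{\a}_{y'}$ and $T_{x}f^{\beta}_{x}$ when $y' \in S_\a$ approaches a point of a stratum $S_\beta$ with $S_\a \subset \bar S_\beta$, which is exactly the estimate needed to show that the prescribed normals to the Milnor tube, and hence the horizontal directions $\nu$, vary rugosely across strata. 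Once rugosity is in hand, a rugose partition of unity assembles the local liftings into a global stratified rugose vector field $\tilde v$ on $(X \cap \B_\e) \setminus V$ satisfying i), ii) and iii).
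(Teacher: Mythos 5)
Your first paragraph agrees with the paper: dropping the radial field $\tilde v_{rad}$ and keeping only the lift $v_{\mathfrak{F}}$ of $u(z)=z$ through the spherefication does give i) and ii) at once. The gap is in your treatment of iii). First, the vector $\nu(x)$ you would get by ``copying verbatim'' Case~1 of Proposition~\ref{lem:vec.fld.sph} is a unit vector tangent to $M_x^\a=\s_{\norm{x}}^\a\cap N_x^\a$, hence \emph{tangent} to the Milnor tube: in that proof the horizontal bundle was engineered to keep $\abs{f}$ from decaying along the flow, which is the opposite of the transversality to the tubes you need here. Second, and more seriously, placing a tube-transverse vector inside $H_x$ does not ``manifestly'' make the lift transverse to the tube: the lift is the unique $w\in H_x$ with $d\mathfrak{F}_x(w)=\mathfrak{F}(x)$, so $w$ is a combination $a\nu(x)+b\pi_{\a_x}(x)$ whose coefficients are dictated by the constraints (norm increasing, $\arg f$ constant), and $\Re\inpr{w}{\grad\log\tilde{f}(x)}$ can perfectly well vanish. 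Worse, at the Case~2 points, where $\pi_{\a_x}(\grad\log\tilde{f}(x))=\lambda\pi_{\a_x}(x)$, all admissible lifts differ by vectors of $T_x\bigl(S_\a\cap X_\t\cap\s_{\norm{x}}\bigr)$, which pair trivially with the tube normal there; so \emph{every} choice of horizontal complement gives the same value of $\Re\inpr{w}{\grad\log\tilde{f}(x)}$, and your mechanism ``choose $H$ so the component is nonzero'' cannot produce iii) at such points. What makes the value nonzero (and positive) there is Lemma~\ref{prop:arg.lambda}, i.e.\ $\abs{\arg\lambda}<\pi/4$; and at the remaining points you must also fix a consistent sign, say $\Re\inpr{\tilde v(x)}{\grad\log\tilde{f}(x)}>0$, before gluing, since a rugose partition of unity preserves ``nonzero'' only when all local choices agree in sign. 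None of this is in your sketch; it amounts to redoing the whole Milnor-type bookkeeping of Proposition~\ref{lem:vec.fld.sph} for the radial field instead of the circular one.

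The paper sidesteps all of this with a second, much cheaper lift, and this is the idea missing from your proposal. Since $f$ itself restricts to a stratified submersion on $(X\cap\B_\e)\setminus V$, Proposition~\ref{rugose lifting} lifts the radial field $u(z)=z$ \emph{through $f$} to a rugose stratified field $v_f$ with $df_x(v_f(x))=f(x)$; this field is automatically transverse to every tube $f^{-1}(\partial\D_\eta)$ and tangent to every $X_\t\setminus V$ (the argument of $f$ is constant along its flow), with no horizontal bundle to engineer and no curve-selection argument. The desired $\tilde v$ is then $v_f+v_{\mathfrak{F}}$, the only geometric point left to check being that $v_f(x)$ and $v_{\mathfrak{F}}(x)$ never point in opposite directions, so that the sum stays transverse both to the spheres and to the tubes; this uses the transversality of the strata of the $X_\t$ to the spheres from Lemma~\ref{it:St.trans.Sp}. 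In short: your route is not unsalvageable, but as written the key step iii) is asserted rather than proved, and the forced-value phenomenon at Case~2 points shows the proof cannot be completed by a choice of horizontal subbundle alone.
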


\begin{proof}
Notice that the vector field $v_{\mathfrak{F}}$ in the proof of
Lemma~\ref{lem:uni.con.str} already satisfies conditions
\ref{it:pr1.l}) and \ref{it:pr2.l}) and is rugose. The aim now is
to modify this vector field to insure that it satisfies also
\ref{it:pr3.l}), being rugose.

Let $\e_0$ be as in Lemma~\ref{it:St.trans.Sp}. Let $\e_0>\e>0$ be
small enough so that the restrictions of $f$ and $\mathfrak{F}$
 to $(X\cap
\B_\e)\setminus V$
are both submersions on each stratum; such an $\e$ exists by the
theorem of Bertini-Sard-Verdier \cite[Thm.~(3.3)]{Ver}. Let $u$ be the radial vector field in the proof of
Theorem~\ref{thm.spherefication}. Now we
use $f$ to lift $u$ to a stratified
rugose vector field $v_f$  on $(X\cap \B)\setminus V$
such that, for every $x\in(X\cap \B)\setminus V$, we have:
\begin{equation*}
df_x(v_f(x))=u(f(x))  \;.
\end{equation*}
The local flow associated to $v_f$ is transverse to all Milnor
tubes $f^{-1}(\partial \D_\eta)$, while the one associated to
$v_{\mathfrak{F}}$ is transverse to all spheres in $\B_\e$ centred at
$\0$.
The integral paths of both move along $X_\t\setminus V$,
{\it i.e.}, along points where the argument $\t$ of $f$ does not
change.

From the definitions of $v_f$ and $v_{\mathfrak{F}}$ and the fact
that each stratum $S_\a\cap X_\t$ of $X_\t$ is transverse to all
the spheres (Lemma~\ref{it:St.trans.Sp}), one can see that the
vectors $v_f(x)$ and $v_{\mathfrak{F}}(x)$ cannot point in
opposite directions for any $x\in (X\cap \B_\e)\setminus V$. Hence
adding up $v_f$ and $v_{\mathfrak{F}}$ on $(X\cap\B_\e)\setminus
V$ we get a vector field $\tilde{v}$ which satisfies the three
properties of Proposition~\ref{lem:uni.con.str.0}.
\end{proof}

%%%%%%%%%%%%%%%%%%%%%%%%%%
%%%%%%%%%%%%%%%%%%%%%

Now let $\phi$ denote the restriction of $\Phi$ to $L_X \setminus
L_f = \s_\e \cap (X \setminus V)$, which defines the classical
Milnor fibration. The flow associated to the  vector field $\tilde
{v}$ in  Proposition~\ref{lem:uni.con.str.0}  defines in the usual
way a homeomorphism between the fibre of ${f}^{-1}(e^{i\t})  \cap
\B_\e$ and the portion of the fibre $\phi^{-1}(e^{i\t})$ defined
by the inequality $\abs{f(x)}\ge \eta$.

To complete the proof we must show that the fibration defined by
$\phi$ on $L_X \setminus L_f$ is equivalent to the restriction of
$\phi$ to the points in the sphere satisfying $\abs{f(x)}>\eta$.
For this we use that since $f$ satisfies Thom's $(a_f)$ condition,
the restriction of $f$ to $T(\e,\eta) = \s_\e \cap f^{-1}(\D_\eta
\setminus \{0 \})$ is a submersion on each stratum. Hence, again
by Verdier's result Proposition~\ref{rugose lifting}, we can lift
the radial vector $u(z) = z$ on $\D_\eta \setminus \{0 \}$ to a
stratified, rugose vector field on $T(\e,\eta)$, whose flow
preserves the fibres of $\phi$ and is transverse to the
intersection with $\s_\e$ of all the Milnor tubes $f^{-1}(\partial
\D_{\eta'})$ for all $0 < \eta' \le \eta$. This gives the
equivalence of the two fibrations, and therefore finishes the
proof of Theorem~\ref{Thm:fib.thm}.

\medskip

%%%%%%%%%%%%%%%%%%%%%%
\section{A fibration theorem on the blow up}\label{section-Blow-up}

In this section  we complete the proof of
statement~{\bf\ref{it:decomposition})} of
Theorem~\ref{Thm:Can.Dec} by proving that the varieties $X_\t$ are
all homeomorphic. In order to do that, we realise the spaces
$X_\t$ as fibres of a topological fibre bundle.

A minimal way to obtain an unfolding of the pencil $(X_\t)$ is
achieved by the blow-up of its axis $V= f^{-1}(0)$. We produce in
this way a new analytic space ${\tilde X}$ with a projection to
$\R\P^1$ whose fibres are exactly the $X_\t$'s. The space ${\tilde
X}$ is equipped with a Whitney stratification obtained in a
canonical way from the one on X. Using the Thom-Mather First Isotopy
Lemma, we prove that ${\tilde X}$ is a fibre bundle over $\R\P^1$. This
is a new Milnor-type fibration theorem in which we do not need
any more to remove the zero locus of the function $f$.

%%%%%%%%%%%%%%

\subsection{The real blow up}
As before, we consider a sufficiently small representative $X$ of
the complex analytic germ $(X,\0)$  and $f\colon (X,\0)
\rightarrow (\C,0)$ holomorphic. Set $V= f^{-1}(0)$ and consider
the real analytic  map
$$\begin{array}{rrcl}
 \Psi \colon & X \setminus V & \rightarrow & \R\P^1 \\
 & z & \mapsto & (Re(f(z)) :  Im(f(z)))\,.\\
 \end{array}$$

Let $\tilde{X}$ be the analytic set in
 $X \times \R\P^1$ defined by  $Re(f)t_2 - Im(f)t_1
 =0$, where $(t_1 : t_2)$ is a system of homogeneous coordinates
 in $\R\P^1$. The first projection induces a real analytic map:
 $$e_V: \tilde{X} \rightarrow X\,;$$
 this is the real blow-up of $V$ in $X$ \cite[\S 3]{Mather:TopStab}.
 It induces a real analytic
 isomorphism  $\tilde{X} \setminus e_V^{-1}(V) \cong X \setminus V$. The
 inverse image of $V$ by $e_V$ is $V \times \R\P^1$.

 The second projection induces a real analytic map:
 $$\tilde{\Psi}: \tilde{X} \rightarrow \R\P^1\,,$$
and one has: $\tilde{\Psi}\mid _{\tilde{X}\setminus e_V^{-1}(V)} =
\Psi \circ e_V\mid _{\tilde{X}\setminus e_V^{-1}(V)}$, {\it i.e.},
 $\tilde{\Psi}$ extends $\Psi$ to $e_V^{-1}(V) \cong V\times
\R\P^1$.
 It is clear that each fibre $\tilde{\Psi}^{-1}(t)$ is isomorphic
to $X_{\theta}\times \{t\}$, where $\t$ is the angle between the
horizontal axis and the line represented by $t$.

\subsection{The fibration theorem on the blow up}
We now prove Theorem~\ref{Thm:blow.up}, {\it i.e.}, that
$$\tilde{\Psi}: \tilde{X} \rightarrow \R\P^1\,,$$ is a
topological fibre bundle with fibres the $X_\t$.

For this, we consider a Whitney stratification $\{S_{\alpha}\}$ of
$X$ adapted to $V$. This induces a
stratification ${\Sigma_{\alpha}}$ on $\tilde{X}$ defined by:

$$\Sigma_{\alpha} = e_V^{-1}(S_{\alpha}).$$
We claim this  stratification is Whitney regular. This is
Proposition~\ref{stratification-blow-up} at the end of the
section, and we assume it for the moment.

Notice that the map $\tilde{\Psi}$ restricted to each stratum
$\Sigma _\alpha$ has no critical point. In fact, outside
$e_V^{-1}(V)$, $\tilde{\Psi}$ coincides with $\Psi$ (up to
isomorphism), which is a submersion onto $\R\P^1$  restricted to
each stratum of $X\setminus V$. On the other hand, the restriction
of $\tilde{\Psi}$ to a stratum contained in $e_V^{-1}(V)$ is
surjective onto $\R\P^1$ and is given by the second projection on
the product $V\times \R\P^1$.

In order to be able to apply Thom-Mather's First Isotopy Lemma we
need to restrict the map  $\tilde{\Psi}$ to a compact subset where
it is a proper submersion (see
\cite[Proposition~11.1]{Mather:TopStab}).

Consider the closed ball $\bar{\B}_\varepsilon$ centred at $\0$
with radius $\varepsilon$, in the ambient space $\C^n$. The
intersection ${\tilde X}$ with $\bar{\B}_\varepsilon\times \R\P^1$
is compact and hence the restricted map $\tilde{\Psi}: {\tilde
X}\cap (\bar{\B}_\varepsilon\times \R\P^1) \rightarrow \R\P^1$ is
proper. This map is a submersion in the stratified sense if and
only if the fibres of $\tilde{\Psi}$ are transverse to
$\s_\varepsilon\times \R\P^1$ or, equivalently, the spaces
$X_\theta$ are all transverse to $\s_\varepsilon$ for sufficiently
small $\varepsilon$. But this is already given by
Lemma~\ref{it:St.trans.Sp}.

Theorem~\ref{Thm:blow.up} obviously implies that the spaces $X_\t$ are all
homeomorphic.

\begin{remark}\label{Rk:iso-sing}
The usual proof of the Thom-Mather First Isotopy Lemma  uses that
the map  $\tilde{\Psi}$   is a proper submersion on each stratum,
in order to lift vector fields from the target space, in this case
$\R\P^1$. In the case when both $X$ and $f$ have an isolated
singularity at $\0$, then each $X_\t$ also has an isolated
singularity at $\0$ and  the canonical vector field on $\R P^1
\cong \mathbb S^1$ can be lifted to a rugose vector field
$\widetilde v$ on the blow-up $\tilde{X}$, which is $C^\infty$
away from the stratum $\{0\} \times \R P^1$ and leaves invariant
the subspace $V \times \R P^1$. This will be used later, for
proving Corollary ~\ref{Cor:Mil.fib}.
\end{remark}

 \medskip

\begin{remark}
Theorem~\ref{Thm:blow.up} gives another proof that the stratification induced on the $X_\t$ is Whitney regular.
Since Whitney regularity conditions are stable under
transversality (see for example \cite[Chap.~1 (1.4)]{GWPL}), the fibres
of the projection ${\tilde \Psi}: {\tilde X} \rightarrow \R\P^1$
inherit a Whitney regular stratification given by $\{\Sigma_\a
\cap {\tilde \Psi}^{-1}(t)\}$. Hence $\{S_\a\cap X_\t\}$ is a
Whitney regular stratification of each $X_\t$.
\end{remark}

\subsection{An induced Whitney stratification on the blow up}

The proposition below completes  the proofs of Theorems~\ref{Thm:Can.Dec}--{\bf\ref{it:decomposition})}
 and \ref{Thm:blow.up}.

\begin{proposition}\label{stratification-blow-up}

Consider a Whitney stratification $\{S_{\alpha}\}$ of a
sufficiently small representative $X$ of the germ $(X,\0)$ adapted
to $V$. Then the partition $\{\Sigma_{\alpha}\}$ given by the
inverse images by $e_V$ of the strata $S_{\alpha}$ is a Whitney
stratification of ${\tilde X}$.
\end{proposition}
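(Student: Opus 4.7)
The proof splits into three parts: verifying that each $\Sigma_\alpha$ is a smooth submanifold, the frontier condition, and Whitney's condition $(b)$. For the first: if $S_\alpha \not\subset V$, then $e_V|_{\Sigma_\alpha}$ is a real analytic isomorphism onto $S_\alpha$; if $S_\alpha \subset V$, then $\Sigma_\alpha = S_\alpha \times \RP^1$. Either way $\Sigma_\alpha$ is a smooth submanifold of $\tilde X$. The frontier condition for $\{\Sigma_\alpha\}$ follows from that of $\{S_\alpha\}$ because $e_V$ is proper, so $\overline{\Sigma_\beta} = e_V^{-1}(\overline{S_\beta})$.

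For Whitney $(b)$, I would split into three cases according to whether $S_\alpha$ and $S_\beta$ lie in $V$ or not. \emph{Case A} ($S_\alpha, S_\beta \not\subset V$): both strata are carried isomorphically by $e_V$ to their images, so the condition transfers directly from the original stratification. \emph{Case B} ($S_\alpha, S_\beta \subset V$): both strata are products with $\RP^1$, and Whitney $(b)$ is stable under taking products with a smooth manifold. \emph{Case C} ($S_\alpha \subset V$, $S_\beta \not\subset V$) is the essential one.

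In Case C, $\Sigma_\alpha = S_\alpha \times \RP^1$ while $\Sigma_\beta$ is the graph of $\Psi|_{S_\beta}$ in $S_\beta \times \RP^1$. I would analyse the limit $T := \lim T_{(x_n,\Psi(x_n))}\Sigma_\beta$ along a sequence converging to $(y, t_0) \in \Sigma_\alpha$. Because $d\Psi_{x_n}$ (the derivative of $\arg f|_{S_\beta}$) blows up as $x_n \to V$, the graph tangent spaces tilt into the vertical direction, yielding a decomposition $T = H \oplus T_{t_0}\RP^1$ with $H = \lim T_{x_n}(X_{\theta_n}\cap S_\beta)$. Thom's $(a_f)$ property (which holds by Remark~\ref{rmk:all.reg.strat}) gives $\lim T_{x_n}f^\beta_{x_n} \supset T_y S_\alpha$; since $f^\beta_{x_n} \subset X_{\theta_n}\cap S_\beta$, this yields $H \supset T_y S_\alpha$ and hence $T \supset T_y S_\alpha \times T_{t_0}\RP^1 = T_{(y,t_0)}\Sigma_\alpha$, which is Whitney $(a)$.

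For Whitney $(b)$ in Case C, let the secant from $(y_n,t_n) \in \Sigma_\alpha$ to $(x_n,\Psi(x_n)) \in \Sigma_\beta$ converge to a unit direction $(v,w) \in T_y\C^n \times T_{t_0}\RP^1$. Its vertical component $(0,w)$ lies automatically in $T$ via the decomposition above. For the horizontal component $(v,0)$, Whitney $(b)$ for $(S_\alpha,S_\beta)$ in $X$ places $v$ in $\lim T_{x_n}S_\beta$; one must then use the strict Thom condition $(w_f)$ of Brian\c{c}on--Maisonobe--Merle together with the rate inequality $|\Psi(x_n)-t_n| \le \|(x_n,\Psi(x_n))-(y_n,t_n)\|$ to force $v$ into the smaller subspace $H$. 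The principal obstacle is precisely this last step: since $H$ has codimension one in $\lim T_{x_n}S_\beta$, the argument must extract from $(w_f)$ a quantitative estimate eliminating the one missing direction, namely the real gradient of $\arg f$ restricted to $S_\beta$, which is exactly the direction that blows up in the blow-up.
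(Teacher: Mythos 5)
Your reduction to Case C, and your Cases A and B, follow the right structural lines, but the proposal has genuine gaps and does not yet constitute a proof. First, the frontier condition does not follow from properness of $e_V$. Properness only makes $e_V$ a closed map, giving $e_V(\overline{\Sigma_\beta})\supset\overline{S_\beta}$, i.e.\ that over each $y\in\overline{S_\beta}\cap V$ \emph{some} point of the fibre $\{y\}\times\RP^1$ lies in $\overline{\Sigma_\beta}$; the frontier condition for $\Sigma_\alpha=S_\alpha\times\RP^1$ needs the \emph{whole} fibre, i.e.\ the equality $\overline{\Sigma_\beta}=e_V^{-1}(\overline{S_\beta})$, and that is precisely the nontrivial point. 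The paper proves it using complex analyticity: near $y$ the restriction of $f$ to $\overline{S_\beta}$ is surjective onto a neighbourhood of $0\in\C$, so every ray $\cL_\t\setminus\{0\}$ is hit by points of $S_\beta$ arbitrarily close to $y$. That this cannot be waved away is illustrated by Remark~\ref{example-Jawad}, where a closure of exactly this kind fails to contain $V$.

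Second, in Case C your decomposition $T=H\oplus T_{t_0}\RP^1$ rests on the assertion that $d\Psi_{x_n}$ blows up; but this concerns the derivative of $\arg f$ \emph{restricted to} $S_\beta$, which amounts to a relative {\L}ojasiewicz-type inequality along the stratum that you do not establish. The paper obtains the needed quantitative input differently: it writes $T_{(x_n,t_n)}\Sigma_\beta=(T_{x_n}S_\beta\times\R)\cap N(L_n)$ from the ambient equation $t_2\,Re(f)-t_1\,Im(f)=0$, shows via the classical {\L}ojasiewicz gradient inequality for $f_1=\Re\tilde f$, together with $\grad f_1\perp\grad f_2$ and $\norm{\grad f_1}=\norm{\grad f_2}$ (holomorphy), that the limit line $L$ is horizontal, and uses Lemma~\ref{ab-bnotzero} (lifting directions through the blow-up of $0\in\R^2$) to justify passing to the limit in the intersection; only then does the vertical direction appear in $T$. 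Finally, the step you yourself call ``the principal obstacle'' is the crux of Whitney $(b)$ and you leave it open; note also that $(w_f)$ as you invoke it bounds distances between tangent spaces of \emph{fibres} and $T_{y'}S_\alpha$, not between secants and relative conormal directions, so it does not directly eliminate the missing direction. The paper's route is different: having shown $\{0\}\times\R\subset T$ in the course of proving condition $(a)$ (Lemma~\ref{condition-a}), it uses Whitney $(b)$ for $(S_\alpha,S_\beta)$ only to control the horizontal part $u$ of the limiting secant, obtains some $(u,v')\in T$, and then corrects the vertical component by adding an element of $\{0\}\times\R\subset T$ — no estimate forcing the secant into the codimension-one subspace $H$ is attempted. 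As it stands, your proposal correctly locates the difficulties but resolves neither of them.
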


\begin{proof}
 Note that if $S_{\alpha}$ is a stratum contained in $V$, the
corresponding stratum $\Sigma_{\alpha} \subset {\tilde X}$ is
 $S_{\alpha}\times \R\P^1$.

We first  check that this partition of ${\tilde X}$ satisfies the
frontier condition. Let $S_{\alpha}$ and $S_{\beta}$ be strata
contained respectively in $V$ and $\sing(X)$, such that
$\Sigma_{\alpha}\cap {\overline {\Sigma_{\beta}}} \neq \emptyset$.
The corresponding strata in $X$ satisfy the inclusion $S_\a
\subset {\overline {S_\beta}}$.

Let $(y,t_1:t_2)$ be a point of $\Sigma_{\alpha}$, {\it i.e.}
$f(y)=0$ and $y\in {\overline {S_\beta}}$. Since $S_\beta$ is not
contained in $V$, the restriction of $f$ to a neighbourhood of $y$
in ${\overline {S_\beta}}$ is surjective over a neighbourhood of
the origin in $\C$. Hence for every line ${\mathcal L}_\theta
\subset \R^2$ containing the origin, the intersection $X_\theta
\cap S_\beta$ has points $y_n$ arbitrarily close to $y$. Taking
the line corresponding to the projective point $(t_1 : t_2)$, we
obtain a sequence $(y_n, t_1 : t_2) \in \Sigma_\beta$ converging
to $(y, t_1 : t_2)$.

In the other cases, it is clear that the frontier condition holds.

Now, we prove that this stratification is Whitney regular.
Consider a stratum $\Sigma_\alpha = S_\alpha \times \R\P^1\subset
V\times \R\P^1$ and a stratum $\Sigma_\beta$ not contained in
$V\times \R\P^1$ such that $\Sigma_\alpha \subset {\bar
\Sigma}_\beta$. For the other cases it is easy to see that Whitney
conditions hold.

Consider a sequence of points $(x_n, t_n)\in \Sigma_\beta \subset
X\times \R\P^1$ converging to a point $(x,t)\in S_\alpha \times
\R\P^1$ such that the sequence of tangent spaces $T_n:=
T_{(x_n,t_n)}\Sigma_\beta$ converges to a linear space $T$.

\begin{lemma}\label{ab-bnotzero}
For any $b\in \R$ there exists a vector $a\in \C^n$ such that the
vector $(a,b)\in T$.
\end{lemma}

\begin{proof}

Consider the following commutative diagram:
\begin{equation*}
 \xymatrix{
{\tilde X}\ar[r]^{e_V}\ar[d]_{\pi} & X\ar[d]^{\psi}\\
*+[l]{U\times \R\P^1 \supset {\tilde U}}\ar[r]^{e_0} & *+[r]{U \subset \R^2}
}
\end{equation*}
where $\psi$ is the real analytic map on $X$ defined by
$(Re(f), Im(f))$, $e_0: {\tilde U} \rightarrow U$ is the blow-up
of the origin in a neighbourhood $U$
 of $0$ in $\R^2$ and $\pi$ is the pull-back of $\psi$ by  $e_0$.

Let us work on a local chart $X\times \R$ of $X\times \R\P^1$ near
the point $(x,t)$.
 Call $(y_n, t_n)\in {\tilde U}$ and $(0,t)\in \{0\}\times \R$
 the respective images of $(x_n,t_n)$ and $(x,t)$ by $\pi$.
 The direction of the tangent
 space $T_{(0,t)}(\{0\}\times \R)= \{0\} \times \R$ is naturally
 contained in the limit of directions of tangent spaces ${\mathcal
 T}_n:=T_{(y_n,t_n)}{\tilde U}$.
 Hence, there exists a sequence of directions of lines $l_n \subset
 {\mathcal T}_n$ converging to $\{0\}\times \R$.

 Since the map $\pi$ is a submersion at $(x_n,t_n)$, there exists a
 sequence of directions of lines $h_n\subset T_n$ whose images by
 the tangent map to $\pi$ at $(x_n,t_n)$ is $l_n$. Since $l_n$
 converges to $\{0\}\times \R$, the limit $h$ of $h_n$ is not
 contained in $\C^n\times \{0\}$. So there exists a vector
 $(a,b)\in T$ such that that $b\neq 0$.

\end{proof}

\begin{lemma}\label{condition-a}
The stratification $\{\Sigma_\alpha\}$ satisfies Whitney's
$(a)$-condition.
\end{lemma}

\begin{proof}

Under the previous notations, we need to prove that the direction
of tangent space $T_{(x,t)}\Sigma_\alpha$ is contained in $T$.

Let us now describe the tangent space $T_n$. Recall that the
blow-up  $\tilde X$ is defined as the subspace of $X\times \R\P^1$
given by the equation $t_2Re(f)- t_1Im(f) =0$, where $(t_1:t_2)$
are homogeneous coordinates in $\R\P^1$.

Suppose the point $(x,t)\in S_\alpha \times \R\P^1$ lives in the
local chart given by $t_1 \neq 0$ or, equivalently, $Re(f)\neq 0$,
and consider a holomorphic extension ${\tilde f}$   of $f$ to the
ambient space $\C^n$. Denote by $f_1$ and $f_2$ respectively the
real and imaginary parts of $\tilde f$.

We then have:
$$T_n= \{(a,b) \in T_{x_n}S_\beta\times \R,  \langle (\frac{t_2}{t_1} \grad_{x_n} f_1 -
\grad_{x_n} f_2) \,,\, a \rangle + f_1(x_n)b =0\}$$

\noindent where $\langle\ , \ \rangle $ is the real scalar (or
inner) product in a real vector space.

If we call $u_n$ the vector $((\frac{t_2}{t_1} \grad_{x_n} f_1 -
\grad_{x_n} f_2), f_1(x_n))\in \C^n\times \R$, $L_n$ the line
generated by $u_n$ and $N(L_n)$ its orthogonal linear space in
$\C^n\times \R$, then we can write:

$$T_n = (T_{x_n}S_\beta\times \R) \cap N(L_n).$$

We  now prove that the sequence of lines $L_n$ converges to a line
$L$ contained in $\C^n\times \{0\}$. This is a consequence of the
{\L}ojasiewicz inequality \cite[p.~92]{Loj}.

In fact, if $g$ is a real analytic map in a neighbourhood of a
point $p\in \R^n$ then there exist a neighbourhood $p\in W\subset
\R^n$ and $0<\theta < 1$ such that, for any $q\in W$ we have:
\begin{equation}\label{lojasiewicz}
\norm{f(q) - f(p)}^{\theta} \leq \norm {\grad_q f}.
\end{equation}

We will apply this inequality to $f_1$ in a neighbourhood of $x$.

Since $\tilde f$ is holomorphic, the vectors $\grad f_1$ and
$\grad f_2$ are orthogonal and have the same module at any point,
so:

$$\norm {\frac{t_2}{t_1} \grad_{x_n} f_1 - \grad_{x_n}
f_2}^2 = \norm {\grad_{x_n}f_1}^2 ((\frac{t_2}{t_1})^2 +1).$$

The function ${\tilde f}$ being holomorphic, it has an isolated
critical value at $0$ and hence $\grad_{x_n} f_1 \neq 0$. So
dividing the vector $u_n$ by the module of $\grad_{x_n}f_1$ we
have that
$$\frac{\norm{\frac{t_2}{t_1} \grad_{x_n} f_1 -
\grad_{x_n} f_2} }{\norm {\grad_{x_n} f_1}} $$ tends to  a
 non-zero  value,
 while by the inequality \eqref{lojasiewicz} one has:
$$\frac{\norm{f_1(x_n)}}{\norm{\grad_{x_n} f_1}}< \norm
{f_1(x_n)}^{1-\theta}\,,$$ and hence it tends to zero. So the
sequence of lines $L_n$ tends to a line $L$ contained in
$\C^n\times \{0\}$ and  the normal spaces $N(L_n)$ converge to a
linear space containing $\{0\} \times \R.$

We are now going to prove that the limit $T$ of $T_n$ is equal to
the intersection of the limit of $T_{x_n}S_\beta \times \R$ with
the normal space $N(L)$ to $L$ in $\C^n \times \R$.

Since the linear space $N(L)$ is a hyperplane it is sufficient to
show that the limit of $T_{x_n}S_\beta \times \R$ is not contained
in $N(L)$, and then the limit of the intersection is the
intersection of the limits.

By Lemma \ref{ab-bnotzero}, for any $0\neq b \in \R$ there exists
a vector $a \in \C^n$ such that $(a,b)\in T$. Since $S_\beta$ has
real dimension at least two, then there exists $b\in \R$, $\neq 0$
and there exists $a\in \C^n$, $\neq 0$, such that $(a,b)\in T$.

So there exists a sequence $(a_n, b_n)\in T_n$ converging to
$(a,b)$. This means that $a_n \in T_{x_n}S_\beta$ and the scalar
product $\langle (a_n,b_n) \, , \, u_n\rangle =0$. If we write
$u_n = (u_{n,1},u_{n,2}) \in \C^n \times \R$, then
$$b_n = - \langle a_n \, , \, \frac{u_{n,1}}{u_{n,2}}\rangle.$$

Since $b_n$ converges to a non zero real value, the limit $a$ of
$a_n$ is not orthogonal in $\C^n$ to the limit $D$ of lines
generated by the vectors $\frac{u_{n,1}}{u_{n,2}}$. Notice that we
have the equality $L = D \times \{0\}$. So for any $s\in \R$ the
vector $(a,s)$ is not orthogonal to the line $L$, and then
$$\lim T_{x_n}S_\beta \times \R \nsubseteq N(L).$$
We conclude that
$$T= ((\lim T_{x_n}S_\beta) \times \R) \cap N(L).$$
On the other hand, the direction of tangent space to
$\Sigma_\alpha$ at $(x,t)$ is given by:
$$T_{(x,t)}\Sigma_\alpha = T_xS_\alpha \times T_t\R\P^1.$$
Since the stratification $\{S_\alpha\}$ on $X$ satisfies Whitney's
$(a)$-condition we have
$$T_xS_\alpha \subset \lim T_{x_n}S_\beta\,,$$
and hence we  obtain
$$T_{(x,t)}\Sigma_\alpha \subset \lim T_{(x_n,t_n)}\Sigma_\beta \,,$$
which proves the $(a)$-condition.
\end{proof}

We  now finish the proof of
Proposition~\ref{stratification-blow-up}. We use  condition $(a)$
to prove  condition $(b)$.

Keeping the previous notations, consider a sequence of points
$(y_n, s_n) \in \Sigma_\alpha = S_\alpha \times \R\P^1$ converging
to $(x,t)\in \Sigma_\alpha$ such that the sequence of lines $l_n$
joining the points $(x_n,t_n)$ and $(y_n, s_n)$ in $\C^n\times \R$
converges to a line $l$. We need to prove that $l\subset T$.

Consider the line $h_n$ generated in $\C^n$ by the vector
$x_n-y_n$. We can suppose the sequence of lines $h_n$ converges to
a line $h$. The lines $h_n$ and $h$ are respectively the
projection onto $\C^n$ of the lines $l_n$ and $l$. So if $(u,v)\in
\C^n \times \R$ is a directing vector of the line $l$, then $u$ is
a directing vector of $h$.

Since the strata $S_\beta$ and $S_\alpha$ satisfy Whitney's
$(b)$-condition, the line $h$ is contained in $\lim
T_{x_n}S_\beta$, and hence, there exists $v'\in \R$ such that the
vector $(u,v')\in \C^n\times \R$ is actually in the limit $T$ of
tangent spaces $T_n$.

By  Lemma \ref{condition-a}, the real line $\{0\}\times \R$ is
contained in T. So the vector $(u,v') + (0, v-v') \in T$,  and
 we obtain
$l\subset T,$ completing the proof of
Proposition~\ref{stratification-blow-up}.
\end{proof}

%%%%%%%%%%%%%%%%%%%%%%%%%%%%%%%%%%%%%%%%%

\subsection{Proof of Corollary~\ref{Cor:Mil.fib}}

Recall from Remark~\ref{rem:gluing.links} that we can write
$$X_{\t} = E_{\t} \cup V \cup E_{\t+\pi} \ .$$

\begin{lemma}\label{lem:acumulacion}
Assume $(X,\0)$ is irreducible and for every angle $\t$ let
$\overline E_\t$ denote the topological closure of $E_\t$. Then we
have:
$$\overline E_\t = E_\t \cup V.$$
\end{lemma}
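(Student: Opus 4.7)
The plan is to establish the two inclusions $\overline{E}_\t \supseteq E_\t \cup V$ and $\overline{E}_\t \subseteq E_\t \cup V$ separately. I would handle the second inclusion first, as it is a short continuity argument. If $x_n \in E_\t$ converges to some $x \in X \cap \B$, then $f(x_n) \to f(x)$ by continuity of $f$; if $f(x)=0$ then $x \in V$, and if $f(x) \ne 0$ the continuity of $\arg$ on $\C \setminus \{0\}$ forces $\arg f(x) = \lim_n \arg f(x_n) = \t$, placing $x$ in $E_\t$. Neither the irreducibility of $X$ nor any refined analytic input is needed for this direction.

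The substantive part is to show $V \subseteq \overline{E}_\t$. My approach is to use that $f|_{X \cap \B}$ is an open map. Since $(X,\0)$ is irreducible and $V = f^{-1}(0)$ is a proper closed analytic subset of $X$ (because $f$ is non-constant on $X$), for $\B$ sufficiently small the representative $X \cap \B$ is still irreducible, and the classical open mapping theorem for non-constant holomorphic maps on irreducible complex analytic varieties applies: $f|_{X \cap \B}$ sends open sets to open subsets of $\C$. Hence, given $v \in V$ and any open neighbourhood $U$ of $v$ in $X \cap \B$, the image $f(U)$ is an open neighbourhood of $0 \in \C$, so it meets the open half-line $\{re^{i\t}: r>0\}$; picking any preimage in $U$ of such a point produces an element of $E_\t \cap U$. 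Letting $U$ run through a shrinking basis of neighbourhoods of $v$ then yields a sequence in $E_\t$ converging to $v$.

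The main obstacle is justifying the openness of a non-constant holomorphic map from a possibly singular irreducible complex analytic variety to $\C$. This is a classical fact; one standard route is to pass to the normalization $\pi\colon \tilde X \to X$, observe that $f \circ \pi$ is a non-constant holomorphic function on the normal (hence locally pure-dimensional) complex space $\tilde X$ and is therefore open off a codimension-two analytic subset by the smooth open mapping theorem, and then transfer openness back to $X$ using the properness and surjectivity of $\pi$. In the present argument I would simply invoke this result. It is worth noting that this is the only place where the irreducibility of $(X,\0)$ is used: the same reasoning goes through under the weaker hypothesis mentioned in Remark~\ref{example-Jawad} that no irreducible component of $(X,\0)$ is contained in $V$, by running the argument on whichever irreducible component of $X$ at $\0$ contains the given point $v \in V$.
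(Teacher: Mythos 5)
Your proof is correct and follows essentially the same route as the paper: the inclusion $\overline E_\t\subseteq E_\t\cup V$ is just the closedness of $f^{-1}$ of the closed ray, and $V\subseteq\overline E_\t$ rests on the local surjectivity of $f$ onto a neighbourhood of $0\in\C$ at each point of $V$, which the paper deduces from irreducibility (so that $V$ has complex codimension one in $X$) exactly as you deduce it from the open mapping theorem on an irreducible representative. One small remark: your normalization sketch (openness off a codimension-two set plus properness of $\pi$) does not by itself yield openness at a point of $V$ all of whose preimages in $\tilde X$ are singular, but since the open mapping theorem for non-constant holomorphic functions on irreducible complex spaces is classical (Remmert), invoking it directly is legitimate and matches the paper's level of detail.
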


\begin{proof}
Since $f$ is not constant in $(X,\0)$ and this germ is
irreducible, the subspace $V = f^{-1}(0)$ has complex codimension
1 in $(X,\0)$. So for each point $x \in V$ there exists a
neighbourhood $U\subset \C^n$, such that the restriction $f :
U\cap X \rightarrow \C$ is surjective onto an open neighbourhood
of $0$ in $\C$. Hence, for every line $\mathcal L$ through $\0$,
each  half line in ${\mathcal L} \setminus \{0\}$ intersects the image of the restriction of
$f$ in an open segment. In other words, for each $\t$,  $E_\t$
has a non-empty intersection with $X\cap U$.  Choosing the
neighbourhood $U$ arbitrarily small, we get that for each $\t$,
there exists a sequence of points in $E_\t$
converging to $x$. That is, every $x \in V$ is in $\overline
E_{\t}$. Since $E_\t\cup V$ is closed, we obtain the equality of the
lemma.
\end{proof}

We see from the preceding lemma that $E_\t$ and $E_{\t + \pi}$
have a common border on $V$. It is in this sense that we say that
they are glued together along $V$ forming the subspace $X_\t$.

\medskip

Notice that  the fibres of $\phi$ are precisely the intersections
of the sphere $\s_\e$ with the corresponding $E_\t$. So the fibres
of $\phi$ over two antipodal points of $\s^1$ are glued together
along the link $L_f$ forming the link $K_\t = X_\t \cap \s_\e$ of
$X_\t$. The fact that the link   $K_\t$ is homeomorphic to the
link of $\{Re \,f = 0\,\}$ is an immediate consequence of
Theorem~\ref{Thm:blow.up}. Furthermore,  if both $X$ and $f$ have
an isolated singularity at $\0$, then by Remark \ref{Rk:iso-sing},
all $X_\t \setminus \{\0\}$ are actually diffeomorphic.

It remains to prove the last statement in
Corollary~\ref{Cor:Mil.fib}, {\it i.e.}, that if both   $X$ and
$f$ have an isolated singularity at $\0$, then the link $K_\t$ of
each $X_\t$ is diffeomorphic to the double of the Milnor fibre of
$f$. This follows from the previous discussion and the {\it
folklore} theorem, that in this setting, Milnor's fibration
theorem gives an open-book decomposition of the sphere. In fact,
from the discussion above (see equation (\ref{eq:links}) in Remark
\ref{rem:gluing.links}) we have:
$$K_\t = (E_{\t} \cap
\s_\e) \, \cup \, (V \cap \s_\e) \, \cup \, (E_{\t+\pi} \cap
\s_\e) \, \, .$$ Then, by Remark \ref{Rk:iso-sing}, we have that
each $K_\t$   is a smooth real analytic, oriented manifold of
dimension $2 \hbox{dim}_\C X - 2$, obtained by gluing two Milnor
fibres of $f$ along their boundary by a smooth diffeomorphism,
given by a smooth flow. Such a diffeomorphism is necessarily
isotopic to the identity, and therefore each $K_\t$ is
diffeomorphic to the double of the Milnor fibre of $f$.

\begin{remark}\label{example-Jawad}
The hypothesis in Corollary \ref{Cor:Mil.fib} of $X$ being
irreducible avoids situations as in the following example.  Let
$g: \C^2 \to \C$ be defined by $g(x,y) = x \cdot y$. Its zero
locus $X $ consists of the two axis. Now   let $f: X \to \C$ be
given by $f(x,y) = x$. Then $V \subset X$ is the $y$-axe and each
$X_\t$ is
$$ X_\t = \{ (x,y) \in \C^2 \,| \,x = 0 \; \hbox{or} \; y = 0 \;  \hbox{and} \;  x = t e^{i
\t} \,, t \in \R \,\}.$$ One has $\,E_\t \cup E_{\t + \pi} = \{
(x,y) \in \C^2 \,| \,  y = 0 \;  \hbox{and} \; x = t e^{i \t} \,,
t \in \R \,\}.\,$ Thus no $z \in V \setminus \{\0\}$ is in the
closure of $E_\t \cup E_{\t + \pi}$.

\end{remark}

\begin{remark}
If $X$ is non-singular and $f$ has an isolated critical point with
respect to some Whitney stratification, then, as observed in
\cite{RS}, the family  $\{X_\t \}$ is (c)-regular in  Bekka's
sense (see \cite{Be}). In this case, one can follow the method of
\cite{RS} to construct two flows on the product $X_\t\setminus
\{0\} \times [0,\pi)$ which give on the one hand the
transversality of the $X_\t$'s with the spheres, and on the other
hand a flow interchanging the $X_\t\setminus \{0\}$'s, as in
Proposition~\ref{lem:vec.fld.sph}. This is similar to what we do
here on the blow-up, and in fact these considerations somehow
inspired our constructions.
\end{remark}

\section{The real analytic case}\label{section-Real-case}

We now look at real analytic mappings from the viewpoint of the
previous sections. Let $U$ be an open neighbourhood  of the origin
$\0$ in $\R^{n+2}$ and let $f: (U,\0) \to (\R^{2},0)$ be a locally
surjective, real analytic map. Set as before $V:= f^{-1}(0)$ and
denote by $K_\e= K$ the intersection $ V \cap \s_\e$. This is the
link of $V$, which is independent of $\e$ up to homeomorphism. We
assume further that $0 \in \R^{2}$ is the only critical value of
$f$, so the Jacobian matrix $Df(x)$ has rank $2$ for all $x \in U
\setminus V$.

\subsection{The strong Milnor condition}

We know from \cite{Milnor:ISH, Mi2} that if $f$  has an isolated
critical point at $\0$, then one has a fibration of the
Milnor-L\^e   type (\ref{ML-fib}), and this can always be taken
into a fibration  of the complement of $K$,   $\s_\e \setminus K \buildrel{\phi} \over
\longrightarrow \s^1$. So $K$ is a {\it
fibred knot}. But we also know from \cite{Mi2} that  the
projection map $\phi$ can not be always taken to be the obvious
map $f/\vert {f} \vert $. As noticed in \cite{Pichon-Seade}, these
remarks extend to the case when the real analytic map-germ $f$ is
assumed to have only an isolated critical value at $0 \in \R^{2}$
provided $V$ has dimension greater than $0$ and $f$ has the Thom property, {\it i.e.}, when there exists
a Whitney stratification of $U$ adapted to $V$, for which $f$
satisfies Thom's $a_f$-condition.

The following definitions extend those given in \cite{RSV}
when $f$ has an isolated critical point.

\begin{definition} Let $f: (U,\0) \to (\R^{2},0)$ be a locally surjective real analytic
map-germ.
\begin{enumerate}[i)]
\item We say that $f$ has the Milnor-L\^e property at $\0 \in U
\subset \R^{n+2}$ if it has an isolated critical value at $0 \in
\R^2$, $V$ has dimension more than $0$ and $f$ has the Thom property.

\item We say that $f$ has the strong Milnor property if for every
sufficiently small $\e >0$ one has a $C^{\infty}$ fibre bundle
$\s_\e \setminus K_\e \buildrel{\phi} \over \longrightarrow \s^1$,
where the projection map $\phi$ is $f/\vert {f} \vert $. (If one
considers map-germs defined on analytic varieties with singular
set of dimension greater than $0$, then this fibre bundle is
required to be only continuous.)
\end{enumerate}

\end{definition}

\subsection{$d$-Regularity for real analytic map-germs }

 Following the  construction above of a canonical pencil for holomorphic maps,
 for each line ${\cal L}_\t$ through $0 \in
\R^{2}$, let $X_\t= f^{-1}({\cal L}_\t)$. One has:

\begin{proposition} Each $X_\t$ is a real analytic
hypersurface  of $U$, of codimension $1$, such that:
\begin{itemize}

\item   Their union is $U$ and the intersection of any two
distinct $X_\t$'s is $V$.

\item   Each $X_\t$ is non-singular away from the singular
set of $V$, ${\rm Sing}(V)$.
\end{itemize}
\end{proposition}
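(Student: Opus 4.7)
The plan is to work throughout with the linear projection $h_\t\colon \R^2\to\R$ onto the line $\cL_\t^\perp$ orthogonal to $\cL_\t$, so that $\cL_\t=h_\t^{-1}(0)$. Setting $g_\t:=h_\t\circ f\colon U\to\R$, which is real analytic, we have $X_\t=g_\t^{-1}(0)$. Because $h_\t$ is a non-zero linear functional and $f$ is locally surjective at $\0$, the composition $g_\t$ cannot vanish identically on any neighbourhood of $\0$; hence $X_\t$ is a proper real analytic subset of $U$, and it remains to identify its codimension and locate its singularities. The two set-theoretic clauses are then immediate: writing $\R^2=\bigcup_\t\cL_\t$ and $\cL_{\t_1}\cap\cL_{\t_2}=\{0\}$ for $\t_1\not\equiv\t_2\pmod\pi$, we obtain $\bigcup_\t X_\t=f^{-1}(\R^2)=U$ and $X_{\t_1}\cap X_{\t_2}=f^{-1}(0)=V$.

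For codimension and smoothness on $U\setminus V$, I would use the hypothesis that $0$ is the only critical value: on $U\setminus V$ the Jacobian $Df$ has rank $2$, and composing with the surjective linear map $h_\t\colon\R^2\to\R$ yields $dg_\t=h_\t\circ df$ of rank $1$ on $U\setminus V$. Thus $g_\t$ is a submersion there, and $X_\t\setminus V$ is a smooth real analytic hypersurface of codimension $1$ in $U$.

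To extend the non-singularity to points of $V\setminus\sing V$, I would argue that at $p\in V\setminus\sing V$ the map $f$ is necessarily a submersion: away from $\sing V$, $V$ is a smooth submanifold of $U$ of codimension $2$, and the identity $f\equiv 0$ on $V$ combined with local surjectivity onto a neighbourhood of $0\in\R^2$ forces $df_p$ to have the full rank $2$ (otherwise the image of any neighbourhood of $p$ would lie in a proper analytic subset of $\R^2$, contradicting local surjectivity at $\0$ after propagating through the connected stratum of $V$ containing $p$). Granting this, $X_\t=f^{-1}(\cL_\t)$ is the preimage of the smooth $1$-submanifold $\cL_\t\subset\R^2$ under a local submersion, hence is a smooth hypersurface near $p$.

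I expect the main delicate point to be the last step: verifying that $f$ is a submersion at every point of $V_{\mathrm{reg}}=V\setminus\sing V$. This is clear when $\sing V$ is interpreted as the critical locus of $f$ lying in $V$ (the standard Jacobian singular set associated to the defining map $f$), in which case the smoothness of $X_\t$ off $\sing V$ is a direct consequence of the submersion theorem. If one instead interprets $\sing V$ as the locus where $V$ fails to be a smooth submanifold, the same conclusion follows from the observation above plus the fact that when $f$ has maximal rank on a non-empty open subset of $V_{\mathrm{reg}}$, analyticity and connectedness of strata spread this property throughout $V_{\mathrm{reg}}$.
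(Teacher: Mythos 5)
The easy parts of your argument are fine and coincide with what the paper evidently has in mind (the paper prints no proof at all here: it declares the statement an exercise, just as it does for the analogous holomorphic Lemma~\ref{lem:singXt}): the union and intersection clauses follow from $\bigcup_\t\cL_\t=\R^2$ and $\cL_{\t_1}\cap\cL_{\t_2}=\{0\}$, and on $U\setminus V$ the rank-two hypothesis makes $g_\t=h_\t\circ f$ a submersion, so $X_\t\setminus V$ is a smooth codimension-one hypersurface. Likewise, under the reading in which $\sing V$ means the critical locus of $f$ inside $V$ --- the reading that agrees with the holomorphic case for a reduced germ and is the usual one in this real Milnor-fibration context --- your one-line argument (preimage of the smooth curve $\cL_\t$ under a submersion) settles everything, and that is surely the intended exercise.

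The genuine problem is your attempt to also cover the reading in which $\sing V$ is the set-theoretic singular locus of $V$: the key claim there, that $f$ must be a submersion at every smooth point of $V$, is false under the standing hypotheses. Take $f(x,y,z)=\bigl((x^2+y^2)x,\,(x^2+y^2)y\bigr)$ on $\R^3$: it is real analytic, locally surjective at $\0$, its only critical value is $0$, and $V$ is the $z$-axis, a smooth curve of codimension two; yet $df$ vanishes identically along $V$. This also breaks the mechanism you invoke: the image of a small ball centred at a non-submersive point of $V$ need not lie in any proper analytic subset of $\R^2$ (in this example it is a full disc about $0$); local surjectivity is a condition on the germ at $\0$ and does not ``propagate'' along strata; and maximal rank is an open, not a closed, condition, so analyticity only forces the rank to drop along a proper analytic subset of the regular part of $V$, not to be constant on it. Worse, under this set-theoretic reading the proposition itself fails under the paper's hypotheses: for $f(x,y,z)=(xz,xy)$ one checks that $0$ is the only critical value and that $f$ is locally surjective at $\0$, while $V=\{x=0\}\cup\{y=z=0\}$ has set-theoretic singular set $\{\0\}$ (and a codimension-one component, against your dimension count), yet $X_0=\{xy=0\}$ is singular along the whole $z$-axis. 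So the second branch of your argument should be discarded rather than repaired: interpret $\sing V$ as (containing) the critical points of $f$ on $V$, where your first argument is already complete.
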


The proof is an exercise and we leave it to the reader.

\begin{definition} The family $\{X_\t \vert {\cal L} \in \RP^{1}
\}$ is {\it the canonical pencil} of $f$.
\end{definition}

\begin{definition}\label{def.d-regularity} The map $f$ is {\it $d$-regular at $\0$} if there exist a positive definite metric
 $d: \R^{n+2} \to \R$, defined by some quadratic form,   and
 $\e > 0$ such
that every sphere (for the metric $d$) of radius $\le \e$ and
centred at $\0$ meets transversally each  $X_\t$. If we want to
emphasise the metric, then we say that $f$ is {\it $d$-regular at
$\0$ with respect to the given metric}.
\end{definition}

\begin{examples}
\begin{enumerate}[i)]

\item By \cite {Mi2} (see Lemma \ref{it:St.trans.Sp} above), every
holomorphic germ  $f:(\C^n,\0) \to (\C,0)$ is d-regular at $\0$
for the usual metric.

\item By \cite {Pichon-Seade}, given holomorphic germs $f,
g:(\C^2,\0) \to (\C,0)$ such that $f \bar g$ has an isolated
critical value at $0 \in \C$, the map $f \bar g$ is d-regular at
$\0$ for the usual metric. The same statement holds for the map
$f/g :(\C^n,\0) \to (\C,0)$ if we further demand that the
meromorphic germ $f/g$ be semi-tame (see \cite {Pichon-Seade} for
the definition and details).

\item By \cite{Se1}, every twisted Pham-Brieskorn polynomial
$z_1^{a_1} \, \bar z_{\sigma(1)} + \cdots   + z_n^{a_n} \, \bar
z_{\sigma(n)}$, where $\sigma$ is a permutation of $\{1,...,n\}$,
is d-regular at $\0$ for the usual metric. The same statement
holds for all quasi-homogeneous singularities, since the
$\R^+$-orbits are tangent to the $X_\t$. This applies, for
instance, to the singularities with polar action of \cite{Cisneros, Oka}.

\item By \cite{RS}, every map-germ $g\colon(\R^{n+2},\0) \to (\R^2,0)$ for
which its pencil is $c$-regular (in the sense of K. Bekka) with
respect to the control function defined by the metric $d$, is
$d$-regular. That was indeed one of the motivations to pursue this
research in the real analytic setting.

\end{enumerate}
\end{examples}

\subsection{The fibration theorem}

Let us prove Theorem \ref{theorem.real}, stated in the
introduction. Notice that statement \ref{it:Milnor.Le}) is
well-known (see for instance \cite{Pichon-Seade}), so we only need
to prove statements \ref{it:Strong.Milnor}) and
\ref{it:equivalence}).

Let us equip $U$ with a Whitney stratification adapted to $V$, and  consider the
restriction of $f$ to $ \B_\e \setminus V$, which is a submersion by hypothesis.

Define the real analytic map
\begin{equation*}
 \mathfrak{F}\colon \B_\e \setminus V\to \R^2\setminus\{0\}
\end{equation*}
by
\begin{equation*}
  \mathfrak{F}(x)= \norm{x} \frac{f(x)}{\norm {f(x)}}.
\end{equation*}
Notice that given $y =   \mathfrak{F}(x)$ in a line ${\cal L}_\t$
through $0$, the fibre $  \mathfrak{F}^{-1}(y)$ is the
intersection of the corresponding element $X_\t$ in the pencil
with the sphere of radius $\norm{x}$ centred at $\0$. Thus we call
$\mathfrak{F}$ the {\it spherefication} of $f$, as in the
holomorphic case.

\begin{lemma} If $f$ is $d$-regular, then $\mathfrak{F}$ is a submersion for all $x \in B_\e \setminus V$ with $\e>0$
sufficiently small.
\end{lemma}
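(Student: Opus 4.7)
The plan is to show that the differential $d\mathfrak{F}_x\colon T_x\R^{n+2}\to T_{\mathfrak{F}(x)}(\R^2\setminus\{0\})$ is surjective at every $x\in \B_\e\setminus V$ by exhibiting two tangent vectors whose images span the two-dimensional target. It is natural to decompose the target along radial and angular directions at $\mathfrak{F}(x)$: the radial component corresponds to changes in $\|\mathfrak{F}(x)\|=\|x\|$, and the angular component corresponds to changes in $f(x)/\|f(x)\|\in\s^1$.

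First I would use $d$-regularity to produce a \emph{radial} direction: since $X_{\theta(x)}=f^{-1}(\cL_{\theta(x)})$ (with $\theta(x)=\arg f(x)$) is a real analytic hypersurface meeting $\s_{\|x\|}$ transversally at $x$, there exists $\nu\in T_xX_{\theta(x)}$ transverse to $T_x\s_{\|x\|}$. Note $X_{\theta(x)}$ is genuinely smooth near $x$ because $x\notin V$ and $f$ has no critical points there, so $f$ is a submersion at $x$. Along a curve with tangent $\nu$ the map $f$ stays in $\cL_{\theta(x)}$ to first order, and in fact, since $x\notin V$, stays locally in the same ray, so $\arg f$ is stationary; meanwhile $\|x\|$ varies because $\nu$ is transverse to the sphere. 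Thus $d\mathfrak{F}_x(\nu)$ is a non-zero vector in the radial direction at $\mathfrak{F}(x)$.

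Next I would produce a vector $\mu$ with non-zero \emph{angular} component: since $f$ is a submersion at $x$, the differential $df_x$ is surjective onto $\R^2$, so I may pick $\mu$ with $df_x(\mu)$ perpendicular to $\cL_{\theta(x)}$. A direct computation of
\begin{equation*}
\frac{d}{dt}\bigg|_{t=0}\frac{f(x+t\mu)}{\|f(x+t\mu)\|}=\frac{1}{\|f(x)\|}\Bigl(df_x(\mu)-\tfrac{\inpr{df_x(\mu)}{f(x)}}{\|f(x)\|^2}f(x)\Bigr)
\end{equation*}
shows this is a non-zero vector perpendicular to $f(x)/\|f(x)\|$, so $d\mathfrak{F}_x(\mu)$ has a non-zero angular component. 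Combining, $d\mathfrak{F}_x(\nu)$ and $d\mathfrak{F}_x(\mu)$ are linearly independent in the two-dimensional space $T_{\mathfrak{F}(x)}\R^2$, so $d\mathfrak{F}_x$ is surjective.

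The essential input is the existence of $\nu$, which is precisely $d$-regularity; the existence of $\mu$ is free from the submersion hypothesis on $f$ away from $V$. The only subtle point I would want to verify is that the norm $\|\cdot\|$ in the definition of $\mathfrak{F}$ should be taken with respect to the same metric $d$ that appears in the $d$-regularity definition (otherwise one replaces $\|x\|$ by the square root of the defining quadratic form for $d$); under this convention the argument is entirely formal and does not require any further analytic input.
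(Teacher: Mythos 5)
Your proposal is correct and follows essentially the same route as the paper, which simply reduces this lemma to the proof of Lemma~\ref{lem:g.sub}: there, surjectivity of $d\mathfrak{F}_x$ is checked by exhibiting one tangent vector whose image is a non-zero angular vector (tangent to the circle of radius $\norm{x}$) and one whose image has non-zero radial component. The only difference is that you swap which hypothesis produces which direction: the paper uses the transversality of $X_\theta$ with the sphere to find a sphere-tangent vector along which $\arg f$ varies (angular direction) and gets the radial direction from any vector transverse to the sphere, whereas you use $d$-regularity to find a vector in $T_xX_\theta$ transverse to the sphere (purely radial image) and obtain the angular direction from the submersivity of $f$ on $\B_\e\setminus V$ --- an equivalent use of the same transversality statement, since for two smooth hypersurfaces transversality at $x$ just says neither tangent hyperplane contains the other; your remark that the norm in $\mathfrak{F}$ must be the one attached to the metric $d$ is also the intended reading of the paper.
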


The proof of this lemma is exactly the same as in Lemma~\ref{lem:g.sub}

\medskip

As in the holomorphic case, this lemma implies the   proposition
below; we leave the details to the reader. The only point to
notice is that because we are assuming the ambient space $U$ is
smooth, the liftings we use of vector fields from $\R^2 \setminus
{0}$ to $U$ can be taken to be $C^\infty$ and we do not need to
use Verdier's theory of rugose vector fields.

 \begin{proposition}\label{lemma} If   $f $ is $d$-regular for some metric $d$, then
 there exists  $\e > 0$ sufficiently small,
such that there exists  $C^\infty$ vector field
on $ \B_{\e} \setminus V$  such that :

\begin{enumerate}[i)]

\item Each of its integral lines    is contained in an element
$X_\t$ of the pencil;

 \item It is transverse to all $d$-spheres
around $\0$; and

\item It is transverse to all Milnor tubes $f^{-1}(\partial
\D_\delta)$, for all sufficiently small discs $\D_\delta$ centred
at $0 \in \R^{2}$.

\end{enumerate}
\end{proposition}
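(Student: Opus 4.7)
The plan is to mimic Proposition~\ref{lem:uni.con.str.0} from the holomorphic case, simplified by the smoothness of the ambient space $U \subset \R^{n+2}$ which allows us to work with ordinary $C^\infty$ vector fields and avoid Verdier's rugose machinery.

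First, by the preceding lemma, $d$-regularity implies that the spherefication $\mathfrak{F} \colon \B_\e \setminus V \to \R^2 \setminus \{0\}$ is a submersion for $\e$ small enough. Since $0$ is the only critical value of $f$, the restriction $f \colon \B_\e \setminus V \to \R^2 \setminus \{0\}$ is also a submersion. I lift the canonical radial vector field $u(z) = z$ on $\R^2 \setminus \{0\}$ through $\mathfrak{F}$ and through $f$ to $C^\infty$ vector fields $v_{\mathfrak{F}}$ and $v_f$ on $\B_\e \setminus V$ satisfying $d\mathfrak{F}(v_{\mathfrak{F}}) = u \circ \mathfrak{F}$ and $df(v_f) = u \circ f$; such liftings exist in the smooth category by a standard partition-of-unity argument applied to any choice of horizontal distribution for the respective submersions.

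Next I verify the three properties for each lift. Because $\arg \mathfrak{F}(x) = \arg f(x)$, the fibres of $\arg \mathfrak{F}$ and of $\arg f$ both coincide with the $X_\t$'s, so both flows preserve each $X_\t$, giving property (i) for either vector field. Because $\|\mathfrak{F}(x)\| = \|x\|$ in the $d$-norm, the flow of $v_{\mathfrak{F}}$ strictly increases the $d$-distance to $\0$, yielding transversality to every $d$-sphere (property (ii)). Symmetrically, since $\|f\|$ strictly increases along integral curves of $v_f$, this vector field is transverse to every Milnor tube $f^{-1}(\partial \D_\delta)$ (property (iii)).

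Finally, to obtain a single vector field with all three properties simultaneously, I set $v := v_{\mathfrak{F}} + v_f$. Tangency to each $X_\t$ is automatic since both summands are tangent. For (ii) and (iii) to be inherited by the sum, one needs that $v_{\mathfrak{F}}(x)$ and $v_f(x)$ never cancel along the sphere-outward and tube-outward directions. This is the main technical point of the argument, and it is where $d$-regularity plays its role: along each nonsingular $X_\t$ the hypothesis guarantees transversality with both $d$-spheres and Milnor tubes, so the components of $v_{\mathfrak{F}}$ and $v_f$ in the respective outward directions are strictly positive and compatibly oriented (both flows push $f(x)$ radially outward in $\R^2$, so one cannot reverse the other's progress against $\|x\|$ or $\|f\|$). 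This is the real-analytic analogue of the $\grad \log f$ computation in the proof of Proposition~\ref{lem:vec.fld.sph}, and I expect it to be the only delicate step; the rest reduces to standard smooth submersion arguments.
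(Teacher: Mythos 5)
Your overall strategy coincides with the paper's intended proof: the paper derives the proposition from the preceding lemma ($d$-regularity makes the spherefication $\mathfrak{F}$ a submersion) ``as in the holomorphic case'', i.e.\ as in Proposition~\ref{lem:uni.con.str.0}, by lifting the radial field $u(z)=z$ once through $\mathfrak{F}$ and once through $f$ --- now in the $C^\infty$ category, since the ambient space is smooth and no rugosity is needed --- and then combining the two lifts. Your identification of the individual properties is also right: $v_{\mathfrak{F}}$ is tangent to the pencil and transverse to the $d$-spheres, $v_f$ is tangent to the pencil and transverse to the tubes.

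The gap is in the combination step, exactly the one you flag as delicate, and the justification you offer for it is incorrect. The relation $d\mathfrak{F}(v_{\mathfrak{F}})=\mathfrak{F}(x)$ controls only $\norm{x}$ (which increases) and $\arg f$ (which is constant) along the flow of $v_{\mathfrak{F}}$; it gives no control on $\norm{f}$, so your parenthetical claim that ``both flows push $f(x)$ radially outward'' is false for $v_{\mathfrak{F}}$, and symmetrically $v_f$ gives no control on $\norm{x}$. For arbitrary choices of the horizontal distributions used in the two liftings, one can modify $v_f$ by a vector field tangent to the fibres of $f$ so that $d\norm{x}\bigl(v_{\mathfrak{F}}+v_f\bigr)$ vanishes at prescribed points (and likewise for $d\norm{f}$ by modifying $v_{\mathfrak{F}}$ inside the fibres of $\mathfrak{F}$), so the sum need not satisfy ii) or iii); moreover, for the intended application (the equivalence of the tube and sphere fibrations in Theorem~\ref{theorem.real}) one needs not mere transversality but a consistent sign of both derivatives along the flow. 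What is missing is the geometric input that, inside each $X_\t$ (both summands being tangent to it), the sphere-slices and the tube-slices are positioned so that the two lifts can be chosen, or shown, not to have cancelling components --- this is where $d$-regularity really enters, and it is the analogue of the paper's argument in Proposition~\ref{lem:uni.con.str.0} (via the transversality statement of Lemma~\ref{it:St.trans.Sp}) that $v_f(x)$ and $v_{\mathfrak{F}}(x)$ can never point in opposite directions. Either make the two liftings compatible (e.g.\ take both inside a common plane field which, within each $X_\t$, is transverse both to $X_\t\cap\s_r$ and to $X_\t\cap f^{-1}(\partial\D_\delta)$), or prove a sign estimate on $d\norm{x}(v_f)$ and $d\norm{f}(v_{\mathfrak{F}})$; as written, your symmetry argument does not establish ii) and iii) for the sum.
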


\medskip

\begin{remark}[Uniform Conical Structure] Notice that $d$-regularity for $f$ implies that
its canonical pencil has a uniform conical structure away from
$V$. In the holomorphic case one has uniform conical structure
everywhere near $\0$, this is part of the content of Theorem
\ref{Thm:Can.Dec}. In the real analytic case envisaged here, one
can prove this uniform conical structure everywhere near $\0$ if
we further demand that $f$ has the strict Thom property, which is
automatic for holomorphic maps into $\C$.
\end{remark}

\medskip

Let us now prove Theorem \ref{theorem.real}. Recall that we are
assuming $f$ has the Thom $a_f$ property. Thus for $ \e
>> \delta
> 0$ sufficiently small one has a {\it solid} Milnor tube,
$$SN(\e, \delta) := \B_\e \cap f^{-1}(\D_\delta \setminus \{0\})\,,$$
and a  fibre bundle:
$$ f : SN(\e, \delta) \longrightarrow \D_\delta \setminus \{0\} \,,$$
where $\B_\e$ is the ball in $\R^{n+2}$ of radius $\e$ and centre
$\0$ and $\D_\delta$ is the disc in $\R^2$ of radius $\delta$ and
centre $0$. The restriction of this locally trivial fibration to
the boundary of $\D_\delta$ gives the fibration in
 the first statement in Theorem
\ref{theorem.real}.

Now let $\pi_1: \D_\delta \setminus \{0\} \to \mathbb S^1$ be
defined by $t \mapsto t/ |t|$. Let $\pi_2:\mathbb S^1 \to \mathbb
R P^1$ be the canonical projection, and set
$$\Psi:=\pi_2 \circ \pi_1 \circ f : SN(\e, \delta)
\longrightarrow \R P^1\,.$$ This is a fibre bundle with fibres
$X_\t \cap SN(\e, \delta)$, and it yields to statement (ii) in
Theorem \ref{theorem.real} restricted to the solid Milnor tube
$SN(\e, \delta)$. We now use the vector field in Proposition
\ref{lemma} to complete the proof of Theorem \ref{theorem.real}.

\section*{APPENDIX: Proof of Lemma \ref{prop:arg.lambda}}

 Here we use the notations and hypotheses of the lemma in question.

\medskip
\noindent
{\bf PROOF OF LEMMA \ref{prop:arg.lambda}:}
 Suppose there were points $z\in S_\a$ arbitrarily close to the origin with
\begin{equation*}
 \pi_{\alpha_z}\bigl(\grad\log f(z)\bigr)=\lambda\pi_{\alpha_z}(z)\neq0,
\end{equation*}
    and with $\abs{\arg\lambda}$ strictly greater than $\pi/4$. In other words, $\lambda$
lies in the open half-plane
\begin{equation*}
 \Re\bigl((1+i)\lambda\bigr)<0,
\end{equation*}
or the open half-plane
\begin{equation*}
 \Re\bigl((1-i)\lambda\bigr)<0.
\end{equation*}
We want to express these conditions by \emph{real analytic} equalities and inequalities, so as to apply the
\emph{analytic curve selection lemma} \cite{BV}.

Let
\begin{equation*}
 W=\set{z\in S_\a}{\pi_{\alpha_z}\bigl(\grad\log f(z)\bigr)=\mu\pi_{\alpha_z}(z),\,\mu\in\C}.
\end{equation*}
Thus $z\in W$ if and only if the equations
\begin{equation*}
 \pi_{\alpha_z}(z)_j\Bigr(\pi_{\alpha_z}\bigl(\grad f(z)\bigr)_k\Bigr)
=\pi_{\alpha_z}(z)_k\Bigl(\pi_{\alpha_z}\bigl(\grad f(z)\bigr)_j\Bigr),
\end{equation*}
are satisfied, where the subindices $j$ and $k$ denote the $j$-th and $k$-th components.

Setting $z_j=x_j+y_j$ and taking real and imaginary parts, we
obtain a collection of real \emph{analytic} equations in the real
variables $x_j$ and $y_j$. This proves that $W\subset
S_\a\subset\C^n$ is a \emph{real analytic set}. Note that a point
$z\in S_\a$ belongs to $W$ if and only if
\begin{equation*}
 \pi_{\alpha_z}\bigl(\grad f(z)\bigr)/\bar{f}(z)=\lambda\pi_{\alpha_z}(z)
\end{equation*}
for some complex number $\lambda$. Multiplying by $\bar{f}(z)$ and
taking the inner product with $\bar{f}(z)\pi_{\alpha_z}(z)$, we
have
\begin{align*}
 \Bigl\langle\pi_{\alpha_z}\bigl(\grad f(z)\bigr),\bar{f}(z)\pi_{\alpha_z}(z)\Bigr\rangle
&=\Bigl\langle\lambda\bar{f}(z)\pi_{\alpha_z}(z),\bar{f}(z)\pi_{\alpha_z}(z)\Bigr\rangle\\
&=\lambda\norm{\bar{f}(z)\pi_{\alpha_z}(z)}^2.
\end{align*}
In other words, the number $\lambda$, multiplied by a positive real number, is equal to
\begin{equation*}
 \lambda'(z)=\Bigl\langle\pi_{\alpha_z}\bigl(\grad f(z)\bigr),\bar{f}(z)\pi_{\alpha_z}(z)\Bigr\rangle.
\end{equation*}
Hence
\begin{equation*}
 \arg\lambda=\arg\lambda'.
\end{equation*}
Clearly $\lambda'$ is a (complex valued) real \emph{analytic} function of the real variables $x_j$ and $y_j$.
Now let $U_+$ (respectively $U_-$) denote the open set consisting of all $z$ satisfying the real \emph{analytic} inequality
\begin{equation*}
 \Re\bigl((1+i)\lambda'(z) \bigr)<0
\end{equation*}
(respectively
\begin{equation*}
 \Re\bigl((1-i)\lambda'(z) \bigr)<0
\end{equation*}
for $U_-$).

We have assumed that there exist points $z$ arbitrarily close to the origin with
$z\in W\cap(U_+\cup U_-)$. Here by the \emph{analytic curve selection lemma} (see for instance \cite[Proposition 2.2]{BV}),
there must exist a real analytic path
\begin{equation*}
 p\colon[0,\e)\to S_\a\subset\C^n
\end{equation*}
with $p(0)=\0$ and with either
\begin{equation*}
 p(t)\in W\cap U_+
\end{equation*}
for all $t>0$, or
\begin{equation*}
 p(t)\in W\cap U_-
\end{equation*}
for all $t>0$. In either case, for each $t>0$ we get
\begin{equation*}
 \pi_{\alpha_t}\Bigl(\grad\log f\bigl(p(t)\bigr)\Bigr)=\lambda\pi_{\alpha_t}\bigl(p(t)\bigr)
\end{equation*}
with
\begin{equation*}
 \abs{\arg \lambda(t)}>\pi/4
\end{equation*}
which contradicts Lemma~\ref{lem:curve.strat}.
\qed

%% The Appendices part is started with the command \appendix;
%% appendix sections are then done as normal sections
%% \appendix

%% \section{}
%% \label{}


\begin{thebibliography}{00}

%% \bibitem{label}
%% Text of bibliographic item

\bibitem{Be}
Karim~Bekka.
\newblock Regular quasi-homogeneous stratifications.
\newblock In {D. Trotman} and {L. C. Wilson}, editors, {\em Stratifications,
  singularities and differential equations, II (Marseille, 1990; Honolulu, HI,
  1990)}, volume~55 of {\em Travaux en Cours}, pages 1--14. Hermann, Paris,
  1997.

\bibitem{BLS}
Jean-Paul Brasselet, L\^e D\~ung Tr\'ang, and Jos\'e Seade.
\newblock Euler obstruction and indices of vector fields.
\newblock {\em Topology}, 39:1193-1208, 2000.

\bibitem{Brianson-Maisonobe-Merle:LSDSWCT}
Jo{\"e}l Brian{\c{c}}on, Philippe Maisonobe, and Michel Merle.
\newblock Localisation de syst\`emes diff\'erentiels, stratifications de
  {W}hitney et condition de {T}hom.
\newblock {\em Invent. Math.}, 117(3):531--550, 1994.

\bibitem{BV}
Dan Burghelea and Andrei Verona.
\newblock Local homological properties of analytic sets.
\newblock {\em Manuscripta Math.}, 7:55--66, 1972.

\bibitem{Cisneros} Jos{\'e} Luis Cisneros-Molina,
\textit{Join theorem for polar weighted homogeneous
singularities}.
\newblock  In ``Singularities II. Geometric and topological aspects". Proceedings
of the   international conference in honor of the 60th Birthday of
L\^e D\~ung Tr\'ang, Cuernavaca, Mexico, January
 2007.
\newblock  Ed. J.-P. Brasselet   et al. AMS
Contemporary Mathematics 475, 43-59 (2008).

\bibitem{CSS:MR} J. L. Cisneros-Molina, J. Snoussi, J. Seade,
Milnor Fibrations and $d$-regularity for real analytic Singularities,
\newblock Preprint, 2008, to appear in International Journal of
Mathematics.

\bibitem{Denkowska-Wachta:CSSACw}
Zofia Denkowska and Krystyna Wachta.
\newblock Une construction de la stratification sous-analytique avec la
  condition (w).
\newblock {\em Bull. Polish Acad. Sci. Math.}, 35(7-8):401--405, 1987.

\bibitem{Durfee:NAS}
Alan~H. Durfee.
\newblock Neighborhoods of algebraic sets.
\newblock {\em Trans. Amer. Math. Soc.}, 276(2):517--530, 1983.

\bibitem{GWPL}
C.~G. Gibson, K.~Wirthm{\"u}ller, A.~A. du~Plessis, E.~N. Looijenga,
  Topological stability of Smooth Mappings, Lecture Notes in Mathematics 552,
  Springer Verlag, 1976.

\bibitem{Goresky-MacPherson:SMT}
Mark Goresky and Robert MacPherson.
\newblock {\em Stratified {M}orse theory}, volume~14 of {\em Ergebnisse der
  Mathematik und ihrer Grenzgebiete (3) [Results in Mathematics and Related
  Areas (3)]}.
\newblock Springer-Verlag, Berlin, 1988.

\bibitem{Hamm}
Helmut Hamm.
\newblock Lokale topologische Eigenschaften komplexer R\"aume,
\newblock {\em Math. Ann.}, 191:235--252, 1971.

\bibitem{Hironaka:SubAnal}
Heisuke Hironaka.
\newblock Subanalytic sets.
\newblock In {\em Number theory, algebraic geometry and commutative algebra, in
  honor of Yasuo Akizuki}, pages 453--493. Kinokuniya, Tokyo, 1973.

\bibitem{Hironaka:SF}
Heisuke Hironaka.
\newblock Stratification and flatness.
\newblock In {\em Real and complex singularities (Proc. Ninth Nordic Summer
  School/NAVF Sympos. Math., Oslo, 1976)}, pages 199--265. Sijthoff and
  Noordhoff, Alphen aan den Rijn, 1977.

\bibitem{Kuo:RTAWS}
Tzee-Char Kuo.
\newblock The ratio test for analytic {W}hitney stratifications.
\newblock In {\em Proceedings of Liverpool Singularities-Symposium, I
  (1969/70)}, Lecture Notes in Mathematics, Vol. 192, pages 141--149, Berlin,
  1971. Springer.

\bibitem{Le1}
 D\~ung~Tr\'ang L\^e.
\newblock Some remarks on relative monodromy.
\newblock In {P. Holm}, editor, {\em Real and complex singularities (Proc.
  Ninth Nordic Summer School/NAVF Sympos. Math., Oslo, 1976)}, pages 397--403.
  Sijthoff and Noordhoff, Alphen aan den Rijn, 1977.

\bibitem{Le:VCCAS}
D\~ung~Tr\'ang L\^e.
\newblock Vanishing cycles on complex analytic sets.
\newblock In ''Various problems in algebraic analysis''. Proc. Sympos., Res. Inst.
  Math. Sci., Kyoto Univ., Kyoto, 1975.
\newblock {\em S\^urikaisekikenky\^usho K\'oky\^uroku}, (266):299--318, 1976.


\bibitem{Loj}
Stanis{\l}aw {\L}ojasiewicz.
\newblock Ensambles semi-analytique.
\newblock Cours donn{\'e} {\`a} la {F}acult{\'e} des Sciences d'Orsay, polycopi{\'e} de l'I.H.E.S., Bures-sur-Yvette, France, 1965.


\bibitem{Lojasiewicz-Stasica-Wachta:SSACV}
Stanis{\l}aw {\L}ojasiewicz, Jacek Stasica, and Krystyna Wachta.
\newblock Stratifications sous-analytiques. {C}ondition de {V}erdier.
\newblock {\em Bull. Polish Acad. Sci. Math.}, 34(9-10):531--539 (1987), 1986.

\bibitem{Massey} David B. Massey
\newblock  A Strong {\L}ojasiewicz Inequality and Real Analytic Milnor Fibrations
\newblock {\em  arXiv:math/0703613}, 2008.

\bibitem{Mather:TopStab}
John Mather.
\newblock Notes on {T}opological {S}tability.
\newblock Harvard University, July 1970.

\bibitem{Milnor:ISH}
John Milnor.
\newblock On isolated singularities of hypersurfaces.
\newblock Preprint June 1966. Unpublished.

\bibitem{Mi2}
John Milnor.
\newblock {\em Singular points of complex hypersurfaces}.
\newblock Annals of Mathematics Studies, No. 61. Princeton University Press,
  Princeton, N.J., 1968.

\bibitem{Oka} Mutsuo Oka
\newblock Topology of polar weighted homogeneous hypersurfaces
\newblock  Kodai Math. J. 31, No. 2, 163-182 (2008).

\bibitem{Par}
Adam Parusi{\'n}ski.
\newblock Limits of tangent spaces to fibres and the {$w\sb f$} condition.
\newblock {\em Duke Math. J.}, 72(1):99--108, 1993.

\bibitem{Pichon-Seade} Anne Pichon, Jos{\'e} Seade.
\newblock Fibred Multilinks and singularities $f \bar g$.
\newblock Math. Ann. 342,   487-514 (2008).

\bibitem{RS}
Maria Aparecida~Soares Ruas and Raimundo Nonato~Ara{\'u}jo dos Santos.
\newblock Real {M}ilnor fibrations and (c)-regularity.
\newblock {\em Manuscripta Math.}, 117(2):207--218, 2005.

\bibitem{RSV}
Maria Aparecida~Soares Ruas, Jos{\'e} Seade, and Alberto Verjovsky.
\newblock On real singularities with a {M}ilnor fibration.
\newblock In A.~Libgober and M.~Tibar, editors, {\em Trends in singularities},
  Trends Math., pages 191--213. Birkh\"auser, Basel, 2002.

\bibitem{Raimundo1} Raimundo Ara\'ujo dos Santos
\newblock  Uniform (m)-condition and Strong Milnor fibrations
\newblock  In ``Singularities II. Geometric and topological aspects". Proceedings
of the   international conference in honor of the 60th Birthday of
L\^e D\~ung Tr\'ang, Cuernavaca, Mexico, January
 2007.
\newblock  Ed. J.-P. Brasselet   et al. AMS
Contemporary Mathematics 475, 189-198  (2008).


\bibitem{Raimundo-Tibar} Raimundo Ara\'ujo dos Santos, Mihai Tib\v ar
\newblock Real map germs and higher open books
\newblock {\em arXiv:0801.3328}, 2008.


\bibitem{Sch}
Marie-H{\'e}l{\`e}ne Schwartz.
\newblock {\em Champs radiaux sur une stratification analytique}, volume~39 of
  {\em Travaux en Cours [Works in Progress]}.
\newblock Hermann, Paris, 1991.

\bibitem{Se1}
Jos{\'e} Seade.
\newblock Open book decompositions associated to holomorphic vector fields.
\newblock {\em Bol. Soc. Mat. Mexicana (3)}, 3(2):323--335, 1997.

\bibitem{Se2} Jos{\'e} Seade.
\newblock  On Milnor�s Fibration Theorem for Real and Complex Singularities
\newblock {\em In ``Singularities in Geometry and Topology"}, Proceedings of the Trieste
Singularity Summer School and Workshop,
\newblock {\em World Scientific}, 2007, p. 127-158.

\bibitem{Se3}
Jos{\'e} Seade.
\newblock {\em On the topology of isolated singularities in analytic spaces},
  volume 241 of {\em Progress in Mathematics}.
\newblock Birkh\"auser Verlag, Basel, 2006.

\bibitem{Teissier:VarPolII}
Bernard Teissier.
\newblock Vari\'et\'es polaires. {II}. {M}ultiplicit\'es polaires, sections
  planes, et conditions de {W}hitney.
\newblock In {\em Algebraic geometry (La R\'abida, 1981)}, volume 961 of {\em
  Lecture Notes in Math.}, pages 314--491. Springer, Berlin, 1982.

\bibitem{Thom:EMS}
Ren\'e~Thom.
\newblock Ensembles et morphismes stratifi\'es.
\newblock {\em Bull. Amer. Math. Soc.}, 75:240--284, 1969.

\bibitem{Ver}
Jean-Louis Verdier.
\newblock Stratifications de {W}hitney et th\'eor\`eme de {B}ertini-{S}ard.
\newblock {\em Invent. Math.}, 36:295--312, 1976.

\bibitem{Whitney:TAV}
Hassler Whitney.
\newblock Tangents to an analytic variety.
\newblock {\em Ann. of Math. (2)}, 81:496--549, 1965.
\end{thebibliography}
\end{document}